\documentclass[11pt,a4paper,reqno]{amsart}

\usepackage{a4wide}
\usepackage[utf8x]{inputenc}
\usepackage[T1]{fontenc}
\usepackage{amsthm,amsrefs}
\usepackage{thmtools}
\usepackage{nccmath}

\usepackage{newtxtext}
\usepackage{newtxmath}
\usepackage[cal=euler,bb=ams,frak=euler,scr=boondoxo]{mathalfa}

\usepackage[all]{xy}
\usepackage{xcolor}
\usepackage{enumitem}
\setenumerate[1]{itemsep=0pt,partopsep=0pt,parsep=\parskip,topsep=5pt}
\setitemize[1]{itemsep=0pt,partopsep=0pt,parsep=\parskip,topsep=5pt}
\setdescription{itemsep=0pt,partopsep=0pt,parsep=\parskip,topsep=5pt}

\usepackage{graphicx}
\usepackage{hyperref}
\hypersetup{
hyperindex,
plainpages=false,
bookmarks=true,
bookmarksopen=true,
bookmarksnumbered,
colorlinks=true,
linkcolor=purple,          
citecolor=blue,        
filecolor=magenta,       
urlcolor=violet,          
}

\theoremstyle{plain}
\newtheorem{theorem}{Theorem}[section]
\newtheorem{corollary}[theorem]{Corollary}
\newtheorem{lemma}[theorem]{Lemma}
\newtheorem{proposition}[theorem]{Proposition}
\newtheorem{remark}[theorem]{Remark}

\theoremstyle{definition}
\newtheorem{definition}[theorem]{Definition}
\theoremstyle{remark}

\newtheorem*{theorem*}{Theorem \ref{mainthm}}

\numberwithin{equation}{section}



\newcommand{\wc}{\rightharpoonup} 
\newcommand{\grass}[2]{\mathbf{G}(#1,#2)}
\newcommand{\Var}{\mathbf{V}}     
\newcommand{\var}{\mathbf{v}}     
\newcommand{\oball}{\mathbf{U}}
\newcommand{\cball}{\mathbf{B}}
\newcommand{\HM}{\mathcal{H}}
\newcommand{\ud}{\ensuremath{\,\mathrm{d}}}



\DeclareMathOperator{\VarTan}{VarTan}   
\DeclareMathOperator{\spt}{spt}
\DeclareMathOperator{\id}{id}
\DeclareMathOperator{\Tan}{Tan}
\DeclareMathOperator{\Lip}{Lip}
\DeclareMathOperator{\dist}{dist}
\DeclareMathOperator{\diam}{diam}
\DeclareMathOperator{\reach}{reach}
\DeclareMathOperator{\im}{im}%
\DeclareMathOperator{\Unp}{Unp}%
\DeclareMathOperator{\ap}{ap} 
\DeclareMathOperator{\intr}{int} 
\newcommand{\scale}[1]{\boldsymbol{\mu}_{#1}}

\newcommand{\mr}{\mathop{\vrule height 1.6ex depth 0pt width
0.13ex\vrule height 0.13ex depth 0pt width 1.3ex}\nolimits}
\newcommand{\cech}{\check{\mathsf{H}}}


\newcommand{\QM}{\mathsf{QM}}
\newcommand{\AM}{\mathsf{AM}}
\newcommand{\cf}{\mathscr{C}}
\newcommand{\ann}{\mathbf{A}}
\newcommand{\dmn}{\mathrm{dmn}}

\newcommand{\Rnum}[1]{\uppercase\expandafter{\romannumeral #1}}
\newcommand{\ora}[1]{{\scriptstyle\xrightarrow{#1}}}

\newcommand{\medcup}{\medop{\bigcup}}

\begin{document}
\title[Quasiminimal sets and Plateau's problem]{Quasiminimal sets and 
Plateau's problem with {\v C}ech homological conditions on $C^2$ submanifold}
\author{Yangqin Fang}
\address{\parbox{\linewidth}{\strut Yangqin Fang\\ School of Mathematics and Statistics\\
Huazhong University of Science and Technology\\
430074, Wuhan, P.R. China\strut}}
\email{yangqinfang@hust.edu.cn}
\subjclass[2020]{Primary: 49J99; Secondaries: 49Q20}
\keywords{Plateau's problem, Quasiminimal sets, Almost minimal sets, Varifolds}
\begin{abstract}
	Let $\Omega\subseteq \mathbb{R}^n$ be an $m$-dimensional closed submanifold
	of class $C^2$, $d$ be a positive integer between 1 and $m$. We will study
	the geometric and topological proprieties of quasiminimal sets in $\Omega$,
	and show that a minimizing sequence of $d$-sets converges to a minimal set 
	in the sense of weak topology. Following from that, we can solve the Plateau's
	problem of dimension $d$ on $\Omega$ with \v{C}ech homological conditions.
\end{abstract}
\maketitle

\section{Introduction}
Plateau's problem raised from the 18th century, has flourished since the middle 20th
century. The first solution to the Plateau's problem in the classical form
came in the early 1930's by Douglas \cite{Douglas:1931} and Rad\'o 
\cite{Rado:1930} independently. In the 1960's, Federer and Fleming  \cite{FF:1960}
introduced currents, and solved the Plateau's problem as a mass minimization
problem of currents. There are simultaneously raised the size minimization
problem of currents. However, the latter is still open. Contemporaneously,
Riefenberg \cite{Reifenberg:1960} considered the Plateau's  problem involving
homological boundary conditions, and he demonstrated the  existence of solutions
in euclidean spaces when the coefficient group is  compactly abelian and when
the support of the algebraic boundary is compact  and one-dimension lower.

Decades have passed since Riefenberg proposed Plateau's problem with {\v C}ech
homological conditions, which was developed by many authors in the last few 
years, and there are indeed plenty of existence results in the standard
euclidean spaces, but little is known when we place it in a more complicated
ambient space, such as euclidean space with holes, manifold, Hilbert or Banach
space etc. There are several difficulties that obstruct its development. Indeed,
when we consider a minimizing sequence of sets for the Plateau's problem, if 
it converges in Hausdorff distance to a set, then the limit set is actually a
competitor, but the problem is that we do not know its Hausdorff measure on
account of lack of  lower semi-continuity for the Hausdorff measure in general, so
we do not know if the limit set is a minimizer or not. We can also take a
minimizing sequence of sets, then consider a sequence of Radon measures which
is just the restriction of the Hausdorff measure on the minimizing sequence, and
suppose that it converges in weak topology to a Radon measure, we automatically
get the lower semi-continuity of the total mass, but it is hard to get the 
density one for the limit measure. 

In this paper, we aim to solve the Plateau's problem with \v{C}ech homological
conditions on a closed submanifold in $\mathbb{R}^{n}$ of class $C^2$, but
what we get is much more than that the Plateau's problem has just been solved,
indeed a lot of properties for quasiminimal sets will be revealed.
We first investigate some geometric and topological properties
of $\QM_d(\Omega,U,M,\varepsilon)$ the class of quasiminimal sets, see
Definition \ref{def:quasimimimal} for the details, and which is indeed
contains all of the generalized quasiminimal sets defined by David in
\cite{David:2009} and the $(\mathbf{M},\varepsilon,\delta)$-minimal sets defined
by Almgren. For any  open set $U\subseteq \mathbb{R}^n$, considering
$\QM_d(U,U,M,\varepsilon)$, by a deformation theorem, for example
\ref{thm:ffp}, it is not difficult to get  that all such quasiminimal sets are
$d$-rectifiable if $\varepsilon(r)\leq  cr^d$ for some constant $c>0$ small
enough, see Corollary \ref{co:rect} for  the details, which is indeed a
simpler proof of rectifiablity of quasiminimal sets. Furthermore, if
$\int_0^{r_0}r^{-d-1}\varepsilon(r)\ud r<\infty$ for some $r_0>0$, then
quasiminimal sets in $\QM_d(U,U,M,\varepsilon)$ are local Ahlfors-regular, see
Corollary \ref{co:AR}. Let's consider a sequence of quasiminimal sets which
converges to a Radon measure in the sense of weak topology. Actually the
support of the limit measure is also quasiminimal, and we have a lower
semi-continuity property and an upper semi-continuity property  for the
Hausdorff measure.
\begin{theorem} \label{thm:QM}
	Suppose $1\leq d\leq m\leq n$. Let $\Omega\subseteq \mathbb{R}^n$ be a
	closed $m$-dimensional submanifold of class $C^2$, $\partial \Omega$ be its
	boundary. Let $U\subseteq \mathbb{R}^n$ be an open set such that $U\cap
	\Omega\neq \emptyset$ and $U\cap \partial \Omega=\emptyset$. Suppose that
	$\{E_k\}\subseteq \QM_d(\Omega,U,M_k,\varepsilon_k)$, $M(r)=\limsup_{k\to
	\infty}M_k(r)$, $\varepsilon(r)=\limsup_{k\to \infty}\varepsilon_k(r)$,
	$\HM^d\mr (E_k\cap U)\wc \mu$, $E=U\cap \spt\mu$. If $M(0+)<\infty$ and
	$\lim_{r\to 0+} r^{-d}\varepsilon(r)=0$, then 
	\begin{itemize}
		\item $E\in \QM_d(\Omega,U,M,\varepsilon)$;
		\item for any open set $O\subseteq U$, we have that   $ \HM^d(E\cap O)
			\leq \liminf_{k\to \infty}\HM^d(E\cap O)$;
		\item for any compact set $H\subseteq U$, we have that   $ \limsup_{k\to
			\infty}\HM^d(E\cap H)\leq  M(0+)\HM^d(E\cap H)$.
	\end{itemize}
\end{theorem}

This theorem is a generalized form of Theorem 3.4 and Theorem 4.1 in
\cite{David:2003}. Following from the lower semi-continuity of Hausdorff
measure in above theorem and the continuity of \v{C}ech homology with respect
to the inverse limit, we can solve the Plateau's problem with \v{C}ech homological
conditions:

\begin{theorem}\label{thm:RPM}
	Suppose $1\leq d\leq m\leq n$. Let $\Omega\subseteq \mathbb{R}^n$ be an
	$m$-dimensional closed submanifold  of class $C^2$, $\partial \Omega$ be its
	boundary, $B_0\subseteq \Omega$ be a compact subset. Let $G$ be an abelian
	group. Suppose that $\partial \Omega \setminus B_0=\emptyset$, $B_0\neq
	\Omega$, and $L\subseteq \cech_{d-1}(B_0;G)$ is a subgroup. If there exists a
	uniformly  bounded minimizing sequence which spans $L$, then there
	exit	minimizers. In particular, if $\Omega$ is a compact submanifold of
	class $C^2$, or $\mathbb{R}^n\setminus \Omega$ is a compact set, and in
	addition there exist competitors, then there exist minimizers. 
\end{theorem}
We refer the readers to Section 6 for the precise definition of terms
minimizing sequence, minimizers, competitors and spanning.
Let us mention an important consequence of the theorem.
\begin{corollary}
	Let $d$, $m$, $n$, $\Omega$, $B_0$, $G$ and $L$ be the same as in Theorem 
	\ref{thm:RPM}. If $\Omega$ is compact and $\cech_{d-1}(\Omega;G)=0$, then
	there  exit	minimizers for the Plateau's  problem with \v{C}ech homological
	conditions. In particular, it holds for $\Omega=\mathbb{R}^m$ and 
	$\Omega=\mathbb{S}^{m}$.
\end{corollary}

For any positive integer $n\geq 2$ and positive integer $d$ between $1$ and
$n$, and any $d$-plane $P$ in $\mathbb{R}^{n}$, we denote by $P_{\natural}$
the orthogonal projection from $\mathbb{R}^{n}$ onto $P$, by abuse notation,
we also denote it by $P$. We  denote by $\grass{n}{d}$ the grassmannian
manifold which consists of  $d$-planes in $\mathbb{R}^{n}$ through 0 equipped
with metric $\rho(T,P)=\|T_{\natural}-P_{\natural}\|$.  Radon measures on
$\Omega\times \grass{n}{d}$ are called varifolds, we denote by
$\Var_d(\Omega)$ the set of varifolds on $\Omega$, see Definition 3.1 in
\cite{Allard:1972}. For any $V\in \Var_d(\Omega)$, we denote by $\|V\|$ the
Radon measure on $\Omega$ defined by $\|V\|(E)=V(E\times \grass{n}{d})$. Let
$\mathscr{M}(\Omega)$ be the collection of $\HM^d$-measurable sets in $\Omega$.
Let $\var:\{E\in \mathscr{M}(\Omega):\HM^d(E)<\infty\}\to \Var_d(\Omega)$ be a
mapping such that for any $\HM^d$-measurable $d$-rectifiable $E$ and $\varphi
\in C_c(\Omega\times\grass{n}{d},\mathbb{R})$,
$	\var(E)(\varphi)=\int_{E}\varphi(x,\Tan^dE,x))\HM^d(x) $,
and that there is a positive constant $\alpha>0$ satisfying that for any 
$\HM^d$-measurable purely $d$-unrectifiable set $E$, $ \|\var(E)\|\leq \alpha
\HM^d\mr E $. We will study the quasiminimal sets further. Indeed, as a direct
consequence of Theorem \ref{thm:QM}, we have the following result:
\begin{theorem}\label{thm:qmv}
	Let $d$, $m$, $n$, $\Omega$, $U$, $\{M_k\}$, $\varepsilon_k$, $\{E_k\}$,
	$E$, $M$ and $\varepsilon$ be the same as in Theorem \ref{thm:QM}. If
	$M(0+)=1$ and $\lim_{r\to 0+} r^{-d}\varepsilon(r)=0$, then $\var(E_k)\wc 
	V\in \Var_d(\Omega)$ and $V\mr U\times \grass{n}{d}=\var(E)$.
\end{theorem}
For any closed submanifold $\Omega$ in $\mathbb{R}^n$, and any open set
$U\subseteq \mathbb{R}^n$, if $\Omega\cap U\neq \emptyset$, we let
$\mathcal{D}(\Omega,U)$ be the collection of all deformations on  $\Omega$ in
$U$, see Definition \ref{def:deformations} for details. For any  set
$E\subseteq \Omega\cap U$, if $E$ is locally $\HM^d$ finite, we put 
$ \mathscr{E}(E)=\sup\{\HM^d(E\setminus \varphi(E))-\HM^d(\varphi(E)\setminus
	E): \varphi\in \mathcal{D}(\Omega,U)\}$.
Then $\mathscr{E}(E)\geq 0$ for all such sets, and $\mathscr{E}(E)=0$ if and
only $E$ is minimal in $U\cap \Omega$. As a consequence of Theorem 
\ref{thm:qmv}, we have the following result:
\begin{corollary}
	\label{co:convvarpre}
	Let $d$, $m$, $n$, $\Omega$, and $U$ be the same as in Theorem \ref{thm:qmv}.
	Let $\{E_k\}$ be a sequence of locally
	$\HM^d$ finite subset in $\Omega\cap U$. Suppose that $\var(E_k)\wc V\in
	\Var_d(\Omega)$ and $\mathscr{E}(E_k)\to 0$. Then there is a $d$-rectifiable
	set  $E\subseteq \Omega\cap U$, which is minimal in $U\cap \Omega$, such
	that $	V \mr U\times\grass{n}{d} =\var(E) $.
\end{corollary}

Let's continue to study the properties of generalized almost sets, which are
indeed some special quasiminimal sets. For any submanifold $\Omega\subseteq
\mathbb{R}^n$ and open set $U\subseteq \mathbb{R}^n$ with $\Omega\cap U\neq
\emptyset$, we denote by $\AM_d(\Omega\cap U,M,\varepsilon)$ the family of
quasiminimal sets $\QM_d(\Omega\cap U,U,M,\varepsilon)$. If the two 
nondecreasing functions $M$ and $\varepsilon$ satisfy certain conditions, then
$\AM_d(\Omega\cap U,M,\varepsilon)$ is compact in the sense of weak topology,
and indeed the weak topology on it is equivalent to the topology induced by
the local Hausdorff distance. See the preliminary section for the definition
of local Hausdorff distance.
\begin{theorem}\label{thm:amostminimal}
	Suppose $1\leq d<m\leq n$. Let $\Omega\subseteq \mathbb{R}^n$ be a closed
	$m$-dimensional submanifold of class $C^2$, $\partial \Omega$ be its
	boundary. Let $U\subseteq \mathbb{R}^n$ be  an open set such that $U\cap
	\Omega\neq \emptyset$ and $U\cap \partial \Omega=\emptyset$. Let
	$M:[0,\infty)\to [1,\infty)$ and $\varepsilon:[0,\infty)\to [0,\infty)$ be
	two nondecreasing functions such that $M(0+)=1$ and $\int_{0}^{r_0}r^{-d-1}
	\varepsilon(r)\ud r<\infty$ for some $r_0>0$. Then 
	\begin{itemize}
		\item $\AM_d(\Omega\cap U,M,\varepsilon)$ is compact in local Hausdorff
			distance;
		\item $\var:\AM_d(\Omega\cap U,M,\varepsilon)\to \Var_d(\Omega\cap U)$ is
			an homeomorphism between its domain and image.
	\end{itemize}
\end{theorem}

\section{Preliminary}

Let $(X,\rho)$ be a metric space, $Y\subseteq X$ a subset. We denote by
$\overline{Y}$, $\intr(X)$ and $\diam(Y)$ the closure, the interior and the 
diameter of $Y$ respectively. For any $x_0\in X$ and $r>0$, we denote by
$\oball(x_0,r)$, $\cball(x_0,r)$ and $\partial \cball(x_0,r)$ the open ball,
the closed ball and the sphere, which are centered at $x_0$ and of radius $r$,
respectively.  We denote by $\cf_c(X,\mathbb{R})$ the set of continuous
functions from $X$ to $\mathbb{R}$ with compact support, and denote by
$\mathscr{R}(X)$ the set of all Radon measures on $X$, which is always
equipped with the weak topology given by saying that $\mu_k\wc \mu$ if and 
only if $\mu_k(f)\to \mu(f)$ for any $f\in \cf_c(X,\mathbb{R})$. Indeed, the 
weak topology is the topology generated by sets
\[
	\{\mu\in \mathscr{R}(X): a<\mu(f)<b\}, \ a, b\in \mathbb{R}, \ f\in
	\cf_c(X,\mathbb{R}).
\]
If $X$ is locally compact, then for any $c>0$, $\{\mu\in \mathscr{R}(X): 
\mu(X)\leq c\}$ is compact with respect to the weak topology.

If $X$ is a locally compact metric space, $\mu, \mu_k$ are  Radon measures on
$X$, then $\mu_k\wc \mu$ if and only if for any  compactly supported bounded lower
semi-continuous function $f:X\to \mathbb{R}$ and compactly supported bounded upper
semi-continuous function $g:X\to \mathbb{R}$, it holds that 
\[
	\int_{X}f\ud \mu \leq \liminf_{k\to \infty}\int_{X}f\ud \mu_k 
\]
and 
\[
	\int_{X}g\ud \mu \geq \limsup_{k\to \infty}\int_{X}g\ud \mu_k.
\]
Indeed, the backward direction is obvious since continuous function is both
lower and upper semi-continuous; and for the forward direction, the former
inequality is following from Fatou's lemma and the fact that lower semi-continuous
function can be approximated by a  monotone increasing sequence of continuous
functions, and the later can be obtained from the former by changing the sign
of the integrand.

For any set $E\subseteq X$, the $d$-dimensional Hausdorff measure $\HM^d(E)$
is defined by 
\[
	\HM^d(E) = \lim_{\delta\to 0+} \inf \left\{ 
	\sum_{i=1}^{\infty} \diam(A_i)^d: E \subseteq \bigcup_{i=1}^{\infty}A_i,
	A_i\subseteq X, \diam(A_i)\leq \delta\right\}.
\]
A set $E\subseteq X$ is called $d$-rectifiable, if there exists a sequence of
Lipschitz mapping $f_i:\mathbb{R}^d\to X$ such that 
\[
	\HM^d \left(E\setminus \bigcup_{i=1}^{\infty} f_i(\mathbb{R}^d)\right)=0.
\]
A set $F\subseteq X$ is called purely $d$-unrectifiable if for any
$d$-rectifiable set $E\subseteq X$, $\HM^d(F\cap E)=0$. For any
$\HM^d$-measurable set $E\subseteq X$ of finite $\HM^d$ measure, it can be
decomposed as the union of a $d$-rectifiable measurable set and a purely
$d$-unrectifiable measurable set, i.e. $E=E^{irr} \sqcup E^{rec}$, the
decomposition is indeed unique up to a set of $\HM^d$ measure zero. A Radon
measure $\mu$ on $\mathbb{R}^n$ is called $d$-rectifiable if there exist a 
$d$-rectifiable set $E\subseteq \mathbb{R}^n$ and a  
Borel function $f$ such that $\mu=f \HM^d\mr E$.

Let $\mu$ be any Radon measure $\mu$ on $X$. For any $x\in
\mathbb{R}^n$, the lower and upper $d$-densities of $\mu$ at $x$ is defined by 
\[
	\Theta_{\ast}^d(\mu,x) = \liminf_{r\to 0+}\frac{\mu(\cball(x,r))}{\omega_d
r^d} \]
and 
\[
	\Theta^{\ast d}(\mu,x) = \limsup_{r\to 0+}\frac{\mu(\cball(x,r))}{\omega_d
r^d} \]
respectively, where $\omega_d$ is the $d$-dimensional Hausdorff measure of the
unity ball in $\mathbb{R}^{d}$. If the lower and upper densities are equal,
then we define the $d$-density of $\mu$ at $x$ to be 
\[
	\Theta^d(\mu,x) = \lim_{r\to 0+}\frac{\mu(\cball(x,r))}{\omega_d r^d}.
\]

If $\mu$ is a Radon measure on $\mathbb{R}^n$, $\Theta^{\ast d}(\mu,x)\in 
(0,\infty)$, and there exists a $d$-plane $T$ such that for any $\tau>0$,
\[
	\Theta^d\Big(\mu\mr \big(\mathbb{R}^n
	\setminus\mathcal{C}(T,x,\tau)\big),x\Big)=0,
\]
where $\mathcal{C}(T,x,\tau)=\{y\in \mathbb{R}^n,\dist(y-x,T)\leq \tau
|y-x|\}$, then we call $T$ an approximate tangent $d$-plane of $\mu$ at $x$.
It is quite easy to see the uniqueness if it exists, and we denote it by
$\Tan^d(\mu,x)$. For any $E\subseteq \mathbb{R}^n$ and $x\in \mathbb{R}^n$, we
define the density of $E$ and the approximate tangent $d$-plane at $x$ to be
$\Theta^d(E,x)=\Theta^d(\HM^d\mr E,x)$ and $\Tan^d(E,x) =\Tan^d(\HM^d\mr E,x)$
respectively. Suppose that $E\subseteq \mathbb{R}^n$ is
$\HM^d$-measurable, $0<t<\infty$.  If $\Theta^{\ast d}(\mu,x)\geq t$
for all $x\in E$, then $\mu(E)\geq t \HM^d(E)$. If $E$ is
$d$-rectifiable, and $\Theta_{\ast}^{d}(\mu,x)\leq t$
for all $x\in E$, then $\mu(E)\leq t \HM^d(E)$.

For any $E\subseteq \mathbb{R}^n$, if $\HM^d(E)<\infty$, then we have $2^{-d}
\leq \Theta^{\ast d}(E,x)\leq 1$ for $\HM^d$-a.e $x\in E$. If, in addition, 
$E$ is $d$-rectifiable and $\HM^d$-measurable, then $\Theta^d(E,\cdot)$
is $\HM^d$ almost equal the characteristic function of $E$; and for $\HM^d$-a.e.
$x\in E$,  $\Tan^d(E,x)$ exists and the mapping $\Tan^d(E, \cdot):E\to 
\grass{n}{d}$ is $\HM^d$-measurable.  

For any $d$-rectifiable set $E\subseteq \mathbb{R}^n$, and Lipschitz mapping 
$f:E\to \mathbb{R}^m$, we denote by $\ap Df$ and $\ap J_d f$ the
$d$-approximately differential and $d$-approximately jacobian of $f$, see 
Theorem 3.2.19, Corollary 3.2.20 and Theorem 3.2.22 in \cite{Federer:1969} for
details. For any $C^1$ mapping $f:\mathbb{R}^n\to \mathbb{R}^n$, we define the
induced mapping $f_{\sharp}:\Var_d(\mathbb{R}^n)\to \Var_d(\mathbb{R}^n)$ by 
\[
	f_{\sharp}V(\varphi)=\int_{\mathbb{R}^n\times \grass{n}{d}}\varphi(f(x),
	Df(x)T)\|\wedge_d Df(x)\circ T_{\natural}\| \ud V(x,T)
\]
for any $V\in \Var_d(\mathbb{R}^n)$ and $\varphi\in \cf_c(\mathbb{R}
^n\times\grass{n}{d},\mathbb{R})$. Let $V\in \Var_d(\mathbb{R}^n)$ be a
varifold, for any $a\in \mathbb{R}^n$, a varifold $W\in \Var_d(\mathbb{R}^n)$ 
is called a tangent varifold of $V$ at $a$, if there is a sequence $\{r_i\}$ 
of positive numbers such that $\lim_{i\to \infty}r_i = 0$, and 
\[
	W=\lim_{i\to \infty}(\scale{1/r_i})_{\sharp} V,
\]
where  $\scale{\nu}(x)=\nu\cdot(x-a)$.
We denote by $\VarTan(V,a)$ the collection of all such tangent varifolds.

Let $U\subseteq \mathbb{R}^n$ be an open set, $\{E,E_m\}$ be a sequence of
relatively closed subset in $U$. We say that $E_m$ converges to $E$ in $U$ in
local Hausdorff distance if 
\[
	\lim_{m\to \infty} d_K(E_m,E)= 0 \text{ for any compact set }K\subseteq U,
\]
where 
\[
	d_K(E_m,E)=\max\left\{\sup_{x\in E_m\cap K}\dist(x,E),\sup_{y\in E\cap
	K}\dist(y,E_m)\right\}.
\]
It is quite easy to see that the collection of all relatively closed subsets
in $U$, which is equipped with the topology induced by the local Hausdorff distance,
is compact.

\section{Deformation theorem}
In this section, we will develop a deformation theorem. By a $d$-set, we mean
a set which has positive finite many $\HM^d$ measure. For any cube
$\Delta=a+[-r,r]^k$ in $\mathbb{R}^{k}$ and $\eta>0$, we denote by $\eta
\Delta$ the cube $a+[-\eta r,\eta r]^k$, and denote by $\ell(\Delta)$ the
sidelength of $\Delta$. For any $x\in \frac{1}{2}\Delta$, let
$p_{\Delta,x}:\mathbb{R}^n\setminus \{x\}\to \partial \cball(x,\sqrt{n}
\ell(\Delta))$ be the mapping defined be 
\[
	p_{\Delta,x}(z)=x+\frac{\sqrt{n}\ell(\Delta)}{|z-x|}(z-x),\ \forall z\neq x,
\]
let $\Pi_{\Delta,x}: \Delta\setminus \{x\}\to \partial \Delta$ be the mapping
defined by 
\[
	\Pi_{\Delta,x}(z)=\{x+t(z-x):t\geq 0\}\cap \partial \Delta.
\]
\begin{lemma}\label{le:prom0}
	Let $X\subseteq \mathbb{R}^n$ be a Borel set, $E\subseteq X$ a 
	$\HM^d$-measurable set, $\Delta\subseteq \mathbb{R}^n$ a $k$-cube with
	$d+1\leq k\leq n$, $\varphi:X\to \Delta$ a continuous mapping, and $f:X\to
	[0,\infty)$ a $\HM^d$-measurable function. Suppose that $\HM^d\mr E$ is
	locally finite, $\Delta$ is centered at $x_0$  with sidelength $r>0$. Then
	there is a constant $C= C(k,d)>0$ such that for any $0<\beta<1$, we can find
	a set  $Y=Y_{\Delta,E,f,\beta}\subseteq \cball(x_0,r/4)\cap \Delta$
	satisfying that  $\HM^k(Y)\geq (1-\beta)\omega_k (r/4)^k $ and for any 
	$x\in Y$,
	\begin{equation}\label{eq:prom0}
		\int_{z\in E} \|(Dp_{\Delta,x})(\varphi(z))\|^df(z)\HM^d(z)\leq 
		\frac{C}{\beta}\int_{z\in E} f(z)\HM^d(z).
	\end{equation}
\end{lemma}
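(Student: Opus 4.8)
The plan is to estimate the average over centers $x$ of the "radial projection distortion" integral and then apply a Chebyshev-type argument. First I would recall the pointwise bound on the differential of $p_{\Delta,x}$: for $z \neq x$ with $|z - x| = \rho$, the map $p_{\Delta,x}$ is (up to the fixed scaling factor $\sqrt{n}\,\ell(\Delta)$) the radial retraction onto the unit sphere composed with dilation, so $\|(Dp_{\Delta,x})(y)\| \leq \sqrt{n}\,\ell(\Delta)/|y - x|$ for every $y \neq x$. Raising to the $d$-th power, the left-hand side of \eqref{eq:prom0}, for a fixed $x$, is bounded by
\[
	(\sqrt{n}\,\ell(\Delta))^d \int_{z \in E} \frac{f(z)}{|\varphi(z) - x|^d}\,\HM^d(z).
\]
Since $\ell(\Delta) = r$, this gives a factor $n^{d/2} r^d$ out front, and the remaining task is purely a bound on $\int_E f(z)\,|\varphi(z) - x|^{-d}\,\HM^d(z)$ averaged in $x$.

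Next I would integrate this quantity in $x$ over the ball $\cball(x_0, r/4) \cap \Delta$ against $\HM^k$ and use Fubini (legitimate since $\HM^d \mr E$ is locally finite and the integrand is nonnegative and measurable). This reduces matters to the single-variable estimate
\[
	\int_{\cball(x_0, r/4) \cap \Delta} \frac{\HM^k(x)}{|\varphi(z) - x|^d} \;\leq\; \int_{\cball(\varphi(z), \sqrt{k}\,r)} \frac{\HM^k(x)}{|\varphi(z) - x|^d},
\]
where I have enlarged the domain of integration to a $k$-ball centered at $\varphi(z)$ of radius $\sqrt{k}\,r \geq \diam(\Delta)$, which certainly contains $\cball(x_0,r/4)\cap\Delta$. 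The latter integral is computed in polar coordinates in $\mathbb{R}^k$: it equals $\sigma_{k-1} \int_0^{\sqrt{k}\,r} \rho^{k-1-d}\,\ud\rho = \frac{\sigma_{k-1}}{k-d}(\sqrt{k}\,r)^{k-d} = \frac{\sigma_{k-1} k^{(k-d)/2}}{k-d}\, r^{k-d}$, which is finite precisely because $k \geq d+1$. Here $\sigma_{k-1} = \HM^{k-1}(\partial\cball(0,1))$ in $\mathbb{R}^k$. Combining, the average of the left side of \eqref{eq:prom0} over $\cball(x_0,r/4)\cap\Delta$ satisfies
\[
	\frac{1}{\HM^k(\cball(x_0,r/4)\cap\Delta)} \int \Big( \cdots \Big)\,\HM^k(x) \;\leq\; \frac{n^{d/2} r^d \cdot \sigma_{k-1} k^{(k-d)/2} r^{k-d}/(k-d)}{\omega_k (r/4)^k}\int_E f\,\HM^d,
\]
using $\HM^k(\cball(x_0,r/4)\cap\Delta) \geq \HM^k(\cball(x_0,r/4)) = \omega_k (r/4)^k$ — wait, one must check $\cball(x_0,r/4) \subseteq \Delta$, which holds since $\Delta$ has half-sidelength $r/2 > r/4$; so indeed $\HM^k(\cball(x_0,r/4)\cap\Delta) = \omega_k (r/4)^k = \omega_k r^k / 4^k$. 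The $r$-powers cancel to give a bound of the form $C(n,k,d)\int_E f\,\HM^d$ with $C = 4^k n^{d/2} k^{(k-d)/2}\sigma_{k-1}/((k-d)\omega_k)$; one should then reconcile this constant with the $4^d k^{d/2}$ appearing in the statement (presumably by a slightly more economical enlargement, e.g. using $\diam(\Delta) = \sqrt{k}\,r$ directly rather than passing through $\sqrt{n}$, and absorbing into $\cball(x_0, r/4)$ the sharper observation that all relevant $x$ lie within distance $\sqrt{k}\,r/? $ — in any case the precise constant is a bookkeeping matter).

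Finally I would run the Chebyshev/pigeonhole step: let
\[
	Y = \Big\{ x \in \cball(x_0,r/4)\cap\Delta : \int_{z\in E}\|(Dp_{\Delta,x})(\varphi(z))\|^d f(z)\,\HM^d(z) \leq \tfrac{1}{\beta}\cdot(\text{average}) \Big\}.
\]
By Chebyshev's inequality the complement within $\cball(x_0,r/4)\cap\Delta$ has $\HM^k$-measure at most $\beta\,\HM^k(\cball(x_0,r/4)\cap\Delta) = \beta\omega_k r^k/4^k$, so $\HM^k(Y) \geq (1-\beta)\omega_k r^k/4^k$. Hmm — the statement claims $\HM^k(Y)\geq(1-\beta)\omega_k r^k$ without the $4^{-k}$; this suggests the intended normalization is $\HM^k(Y)\geq(1-\beta)\HM^k(\cball(x_0,r/4)\cap\Delta)$, or that $\omega_k$ in the paper denotes $\HM^k(\cball(0,1/4))$, or there is a typo — I would state it as $\HM^k(Y) \geq (1-\beta)\,\HM^k(\cball(x_0,r/4)\cap\Delta)$ and carry that through. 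The only genuine subtlety, and thus the main obstacle, is the measurability of the inner integral as a function of $x$ (needed for Fubini and for $Y$ to be measurable): since $\varphi$ is continuous and $(x,y)\mapsto \|(Dp_{\Delta,x})(y)\|$ is continuous on $\{(x,y): y\neq x\}$, with the singular set $\{z : \varphi(z) = x\}$ being $\HM^d$-null for $\HM^k$-a.e.\ $x$ (again because $k > d$, by a Fubini argument), the integrand is a Carathéodory function and standard measurable-dependence-on-parameters results apply. Everything else is the explicit polar-coordinate computation above.
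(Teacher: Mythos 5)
Your proposal follows essentially the same route as the paper: the pointwise bound $\|Dp_{\Delta,x}(y)\|\le \sqrt{k}\,\ell(\Delta)/|y-x|$, Fubini--Tonelli to swap the $x$- and $z$-integrations, the polar-coordinate estimate $\int_{\cball(c,R)}|x-c|^{-d}\ud\HM^k(x)=\sigma_{k-1}R^{k-d}/(k-d)$ (finite since $k\ge d+1$), and a Chebyshev selection of the good set $Y$. The constant and measure-normalization discrepancies you flag (the $4^{-k}$ versus the stated $(1-\beta)\omega_k r^k$, and the exact power of $4$ and $\sqrt{k}$) are present in the paper's own computation as well and are indeed bookkeeping, so your argument is correct and matches the intended proof.
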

\begin{proof}
	We see that for any unit vector $v\in \mathbb{R}^k$,
	\[ Dp_{\Delta,x}(z)v=\frac{4r\sqrt{k}}{|z-x|}\left(v-\left\langle
		\frac{z-x}{|z-x|},v
		\right\rangle\frac{z-x}{|z-x|}\right)
	\]
	and $\|Dp_{\Delta,x}(z)\|=r\sqrt{k}|z-x|^{-1}$. Since there is a constant
	$c=c(k,d)>0$ such that 
	\[
		\int_{x\in \cball(x_0,r_0)\cap \mathbb{R}^k} \frac{1}{|w-x|^d}\ud \HM^k(x)
		\leq c r_0^{k-d},\ \forall w\in \mathbb{R}^k,
	\]
	we get that, by setting $\cball' = \cball(x_0,r/4)\cap \Delta$,
	\[
		\begin{aligned}
			\int_{x\in \cball'}\int_{z\in E}\|(Dp_{\Delta,x})(\varphi(z))\|^d f(z)
			\ud \HM^d(z) \ud\HM^k(x)&= \int_{z\in E}\int_{x\in \cball'}
			\frac{(r\sqrt{k})^df(z)}{|\varphi(z)-x|^d}\ud\HM^k(x)\ud \HM^d(z)\\
			&\leq (r\sqrt{k})^d \cdot c \cdot (r/4)^{k-d}\int_{z\in E} f(z)\HM^d(z) \\
			&= 4^d k^{d/2} c \cdot (r/4)^k \int_{z\in E} f(z)\HM^d(z).
		\end{aligned}
	\]
	We take $C= \omega_k^{-1}4^d k^{d/2}  c$. Then by Chebyshev's inequality,
	there is a set $Y\subseteq \cball(0,r/4)\cap \Delta$ such that
	$\HM^k(Y)\geq (1-\beta)\omega_k (r/4)^k $ and \eqref{eq:prom0} hold for any
	$x\in Y$.
\end{proof}
\begin{lemma}\label{le:prom1}
	For any $k$-cube $\Delta\subseteq \mathbb{R}^k$, $\HM^d$-measurable set 
	$E\subseteq \Delta$ and $\HM^d$-measurable function $g:E\to [0,\infty)$.  If
	$\HM^d\mr E$ is locally finite and  $\HM^{d+1}(\overline{E})=0$, then we can
	find $x_{\Delta}\in \frac{1}{2}\Delta \setminus \overline{E}$ and a
	$C^{\infty}$ mapping $q_{\Delta,x_{\Delta}}:\mathbb{R}^k\to
	\cball(x,\sqrt{k}\ell(\Delta))$ such  that, by setting $E=E^{rec}\sqcup
	E^{irr}$ and $\cball=\cball(x_{\Delta}, \dist(x_{\Delta},E)/2)$,
	$q_{\Delta,x_{\Delta}}(E^{irr})$ is purely  $d$-unrectifiable, $
	q_{\Delta,x_{\Delta}}\vert_{\Delta\setminus \cball }
	=p_{\Delta,x_{\Delta}}\vert_{\Delta\setminus \cball}$ and 
	\begin{equation}\label{eq:prom1}
		\int_{z\in E} \|Dp_{\Delta,x}(z)\|^d g(z)\HM^d(z)\leq 
		4C\int_{z\in E} g(z)\HM^d(z),
	\end{equation}
	where $C=C(k,d)>0$ is the constant in Lemma \ref{le:prom0}.
\end{lemma}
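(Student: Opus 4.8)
This statement refines Lemma~\ref{le:prom0}: besides the averaged inequality \eqref{eq:prom1} one must be able to take the centre off $\overline E$ and so that the radial projection keeps the irregular part purely unrectifiable, after which $q_{\Delta,x_\Delta}$ will be a routine smoothing of $p_{\Delta,x_\Delta}$ near the centre. I may assume $d+1\le k$ (if $d=k$ the purely $d$-unrectifiable part $E^{irr}$ of $E$ has $\HM^d(E^{irr})=0$ and there is nothing to prove); write $\ell=\ell(\Delta)$, let $x_0$ be the centre of $\Delta$, and write $E=E^{rec}\sqcup E^{irr}$ for the decomposition of $E$ into its $d$-rectifiable and its purely $d$-unrectifiable parts (legitimate since $\HM^d\mr E$ is locally finite and $\Delta$ is bounded, so $\HM^d(E)<\infty$). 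First I would apply Lemma~\ref{le:prom0} with $X=\Delta$, $\varphi=\id_\Delta$, $f=g$ and $\beta=\tfrac14$, obtaining a set $Y\subseteq\cball(x_0,\ell/4)\cap\Delta\subseteq\tfrac12\Delta$ with $\HM^k(Y)\ge\tfrac34\,\omega_k\ell^k$ on which \eqref{eq:prom1} holds for every centre $x\in Y$ (the constant there is precisely the one in Lemma~\ref{le:prom0} with $\beta=\tfrac14$). From $\HM^{d+1}(\overline E)=0$ and $d+1\le k$ we also get $\HM^k(\overline E)=0$. So it remains to find
\[
  x_\Delta\in Y\setminus\bigl(\overline E\cup N\bigr),\qquad N:=\{x\in\tfrac12\Delta\setminus\overline E:\; p_{\Delta,x}(E^{irr})\text{ is not purely }d\text{-unrectifiable}\},
\]
which is possible once $\HM^k(N)=0$, since then $\HM^k(Y)>0=\HM^k(\overline E\cup N)$ and $Y\subseteq\tfrac12\Delta$.

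The heart of the matter is the claim $\HM^k(N)=0$; informally, a radial projection from $\HM^k$-almost every centre preserves pure $d$-unrectifiability --- an analogue for radial projections of the Besicovitch--Federer projection theorem. For $x\notin\overline E$ the map $p_{\Delta,x}$ is $C^\infty$ and Lipschitz near $E$, so $p_{\Delta,x}(E^{irr})$ is analytic, hence $\HM^d$-measurable, with $\HM^d(p_{\Delta,x}(E^{irr}))<\infty$, and the Besicovitch--Federer theorem applies to it; by its elementary half, $x\notin N$ as soon as $\HM^d(\proj{V}(p_{\Delta,x}(E^{irr})))=0$ for almost every $d$-plane $V\in\grass{k}{d}$, with respect to the rotation-invariant measure $\gamma$. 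Consequently $\HM^k(N)=0$ follows from
\[
  \int_{\tfrac12\Delta}\int_{\grass{k}{d}}\HM^d\bigl(\proj{V}\,p_{\Delta,x}(E^{irr})\bigr)\,\ud\gamma(V)\,\ud\HM^k(x)=0 .
\]
To prove this I would use that $p_{\Delta,x}$ maps each sphere about $x$ onto $\partial\cball(x,\sqrt{k}\,\ell)$ by a dilation: in coordinates $z=(\proj{V}z,\perpproj{V}z)$, $x=(\proj{V}x,\perpproj{V}x)$ adapted to $V$, one checks that $\proj{V}\circ p_{\Delta,x}$ equals, up to translation by $\proj{V}x$, the composition of the fixed Lipschitz diffeomorphism $w\mapsto\sqrt{k}\,\ell\,w/\sqrt{|w|^2+1}$ of $V$ onto $\{w\in V:|w|<\sqrt{k}\,\ell\}$ with the map $z\mapsto(\proj{V}z-\proj{V}x)/|\perpproj{V}z-\perpproj{V}x|$. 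Integrating in $\proj{V}x$ by Fubini's theorem turns $\int_{\proj{V}x}\HM^d(\proj{V}p_{\Delta,x}(E^{irr}))$ into a constant times $\int_{y\in V}\HM^d\bigl(\{\proj{V}z-|\perpproj{V}z-\perpproj{V}x|\,y:z\in E^{irr}\}\bigr)\,\ud\HM^d(y)$; since $z\mapsto\proj{V}z-|\perpproj{V}z-\perpproj{V}x|\,y$ is a rank-$d$ linear surjection $V\times\mathbb R\to V$ precomposed with $z\mapsto(\proj{V}z,|\perpproj{V}z-\perpproj{V}x|)$, a change of variables sending $y$ to an element of $\grass{d+1}{d}$ rewrites this as a Grassmannian average of $\HM^d$ of orthogonal projections of $\{(\proj{V}z,|\perpproj{V}z-\perpproj{V}x|):z\in E^{irr}\}$. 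Pushing the same computation further in the variable $\perpproj{V}x$ --- linearising the remaining distance map on a countable family of small cubes covering $E^{irr}$, and using that the induced maps onto $\grass{k}{d}$ carry $\ud\gamma(V)\,\ud\HM^k(x)$ to a measure absolutely continuous with respect to $\gamma$ --- would bound the whole double integral by $\int_{\grass{k}{d}}w(V')\,\HM^d(\proj{V'}(E^{irr}))\,\ud\gamma(V')$ for a locally bounded weight $w$. Since $E^{irr}$ is purely $d$-unrectifiable with $\HM^d(E^{irr})<\infty$, the Besicovitch--Federer theorem gives $\HM^d(\proj{V'}(E^{irr}))=0$ for $\gamma$-almost every $V'$, so this integral --- hence $\HM^k(N)$ --- is $0$.

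With $x_\Delta\in Y\setminus(\overline E\cup N)$ now fixed --- so $x_\Delta\in\tfrac12\Delta\setminus\overline E$, \eqref{eq:prom1} holds for $x=x_\Delta$, and $p_{\Delta,x_\Delta}(E^{irr})$ is purely $d$-unrectifiable --- I would finally build $q_{\Delta,x_\Delta}$ by smoothing $p_{\Delta,x_\Delta}$ inside a small ball about the centre. Put $\rho:=\tfrac12\dist(x_\Delta,E)>0$ and $\cball:=\cball(x_\Delta,\rho)$, pick $\eta\in C^\infty([0,\infty),(0,\infty))$ with $\eta(t)=t^{-1/2}$ for $t\ge\rho^2$, $\eta$ constant near $0$, and $t\,\eta(t^2)\le1$ throughout (such $\eta$ exists by a standard cut-off), and set $q_{\Delta,x_\Delta}(z):=x_\Delta+\sqrt{k}\,\ell\,(z-x_\Delta)\,\eta(|z-x_\Delta|^2)$. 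Then $q_{\Delta,x_\Delta}$ is $C^\infty$ on $\mathbb R^k$, maps into $\cball(x_\Delta,\sqrt{k}\,\ell)$, and coincides with $p_{\Delta,x_\Delta}$ on $\{|z-x_\Delta|\ge\rho\}\supseteq\Delta\setminus\cball$. Finally $E^{irr}\subseteq E\subseteq\Delta$ and $\dist(x_\Delta,E^{irr})\ge\dist(x_\Delta,E)=2\rho>\rho$ give $E^{irr}\subseteq\Delta\setminus\cball$, so $q_{\Delta,x_\Delta}$ agrees with $p_{\Delta,x_\Delta}$ on $E^{irr}$ and hence $q_{\Delta,x_\Delta}(E^{irr})=p_{\Delta,x_\Delta}(E^{irr})$ is purely $d$-unrectifiable, as wanted.

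The main obstacle is exactly the identity $\HM^k(N)=0$. The maps $\proj{V}\circ p_{\Delta,x}$ are genuinely nonlinear, so the passage from the double integral to ordinary orthogonal projections of $E^{irr}$ must be organised with care --- through the explicit change of variables above, pushed further, or by covering $E^{irr}$ with small cubes, linearising on each with quantitative error control, and invoking the absolute continuity with respect to $\gamma$ of the map sending $(V,x)$ to the effective projection plane of $\proj{V}\circ p_{\Delta,x}$. It is at this point, beyond the elementary bound $\HM^k(\overline E)=0$, that the hypothesis $\HM^{d+1}(\overline E)=0$ is needed: it discards the $\HM^k$-negligible family of centres near $\overline E$ where $p_{\Delta,x}$ degenerates and keeps all the auxiliary integrals finite.
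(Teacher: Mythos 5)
Your overall skeleton is exactly the paper's: apply Lemma \ref{le:prom0} with $\beta=1/4$ (which yields precisely the constant $4^{d+1}k^{d/2}\sigma_{k-1}/((k-d)\omega_k)$ in \eqref{eq:prom1}), use $\HM^{d+1}(\overline E)=0$, hence $\HM^k(\overline E)=0$, to move the centre off $\overline E$ while staying in the good set, and then define $q_{\Delta,x_\Delta}$ by a smooth cut-off of $p_{\Delta,x_\Delta}$ inside the ball $\cball$, so that $q_{\Delta,x_\Delta}=p_{\Delta,x_\Delta}$ on $\Delta\setminus\cball$ and $q_{\Delta,x_\Delta}(E^{irr})=p_{\Delta,x_\Delta}(E^{irr})$. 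All of that matches the paper's proof and is fine. The one point where you depart from the paper is the key fact that for $\HM^k$-a.e.\ centre $x$ the radial image of $E^{irr}$ is purely $d$-unrectifiable: the paper does not prove this but quotes Lemma 2.2 of \cite{FFL:2019} (stated there for $\Pi_{\Delta,x}$, which is equivalent via the biLipschitz map $f_{\Delta,x}$), whereas you attempt to derive it from the Besicovitch--Federer projection theorem, and it is there that your argument has a genuine gap.

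Concretely, your Fubini step in the variable $\proj{V}x$ is correct and reduces the problem to showing that, for a.e.\ $(V,b)$ and a.e.\ $y\in V$, the set $\{\proj{V}z-|\perpproj{V}z-b|\,y:z\in E^{irr}\}$ is Lebesgue null in $V$. But the route you indicate --- viewing this set as essentially an orthogonal projection of $\tilde E_b=\{(\proj{V}z,|\perpproj{V}z-b|):z\in E^{irr}\}\subseteq V\times\mathbb{R}$ and applying Besicovitch--Federer in $\mathbb{R}^{d+1}$ --- does not go through as stated, because $\tilde E_b$ is only a Lipschitz (and, when $k-d\geq 2$, badly non-injective) image of $E^{irr}$, and Lipschitz images of purely $d$-unrectifiable sets need not be purely $d$-unrectifiable; so Besicovitch--Federer cannot be invoked for $\tilde E_b$. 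You acknowledge this by proposing to ``push the computation further'' in $\perpproj{V}x$, linearising the distance map on small cubes and appealing to an unproved absolute-continuity property of the induced map into $\grass{k}{d}$, in order to bound the whole double integral by $\int w(V')\,\HM^d(\proj{V'}(E^{irr}))\,\ud\gamma(V')$; that final bound would indeed finish the proof, but it is exactly the technical heart of the statement (a Marstrand--Mattila type visibility result), and it is asserted rather than proved: linearisation changes the image set, and comparing $\HM^d$ of images under the nonlinear maps with $\HM^d$ of images under their linearisations requires a genuine quantitative argument. Either carry out this reduction in detail or simply quote the known lemma, as the paper does; with that single step supplied, the rest of your proof is correct and coincides with the paper's.
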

\begin{proof}
	Let $\kappa:\mathbb{R}\to [0,1]$ be a $C^{\infty}$ function such that
	$\kappa(t)=0$ for any $t\leq 0$ and $\kappa(t)=1$ for any $t\geq 1$.
	Applying Lemma \ref{le:prom0} with $\beta=1/4$, we can find $Y_{\Delta}
	\subseteq \frac{1}{2}\Delta$ such that $\HM^k(Y_{\Delta})>0$ and
	\eqref{eq:prom1} hold for any $x\in Y_{\Delta}$. By Lemma 2.2 in
	\cite{FFL:2019}, we get that for $\HM^k$-a.e. $x\in Y_{\Delta}$,
	$\Pi_{\Delta,x}(E^{irr})$ is purely $d$-unrectifiable, denote by
	$\tilde{Y}_{\Delta}$ the set of such points $x$. Since $\HM^{d+1}
	(\Delta\cap \overline{E})=0$, we have that $\HM^k( \tilde{Y}_{\Delta}
	\setminus \overline{E})>0$, pick one point $x_{\Delta}\in
	\tilde{Y}_{\Delta}\setminus \overline{E}$, put $r_{\Delta}=\dist(x_{\Delta},
	\overline{E})$, and define the  $C^{\infty}$ mapping $q_{\Delta,x}:
	\mathbb{R}^n\to \cball(x,\sqrt{n} \ell(\Delta))$ by
	\[ q_{\Delta,x}(z)=\begin{cases}

			x+\kappa\left(2r_{\Delta}^{-1}(z-x)\right)\left(p_{\Delta,x}(z)-x\right),
			&z\neq x,\\ x,& z=x.
		\end{cases}
	\]
	Then $q_{\Delta,x_{\Delta}}\vert_{\Delta\setminus \cball }
	=p_{\Delta,x_{\Delta}}\vert_{\Delta\setminus \cball}$ and
	$q_{\Delta,x_{\Delta}}(E^{irr})=p_{\Delta,x_{\Delta}}(E^{irr})$ is purely
	$d$-unrectifiable.
\end{proof}
\begin{lemma}\label{le:prom2}
	Let $\Delta\subseteq \mathbb{R}^k$ be a $k$-cube.	For any $\HM^d$-measurable
	set $E\subseteq \Delta$ and $\HM^d$-measurable function $g:\Delta\to
	\mathbb{R}$, if $\HM^d\mr E$ is locally finite, then  \[ \int g \ud \HM^d
		\mr \Pi_{\Delta,x}(E)\leq 2^{d}k^{d/2} \int (g\circ
	\Pi_{\Delta,x})\|Dp_{\Delta,x}\|^d \ud \HM^d \mr E \]
\end{lemma}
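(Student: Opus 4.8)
The plan is to factor $\Pi_{\Delta,x}$ through the circumscribed sphere, $\Pi_{\Delta,x}=\psi\circ p_{\Delta,x}$, and to control each factor by the elementary principle that a Lipschitz map multiplies $\HM^{d}$ by at most the $d$-th power of its Lipschitz constant. I would first reduce to $g\ge0$; since $\{x\}$ is $\HM^{d}$-null one may also assume $x\notin E$, and since $\Delta$ is compact and $\HM^{d}\mr E$ is locally finite, $\HM^{d}(E)<\infty$, so every measure occurring below is finite. The tool I would use repeatedly is: if $S\subseteq\mathbb{R}^{k}$ and $F\colon S\to\mathbb{R}^{N}$ satisfies $|F(z)-F(z')|\le L|z-z'|$ on $S$, then $\HM^{d}(F(A))\le L^{d}\HM^{d}(A)$ for every $A\subseteq S$ (immediate from the $\delta$-cover definition of $\HM^{d}$, a cover of $A$ being carried by $F$ to a cover of $F(A)$ with diameters scaled by at most $L$); hence, approximating $h$ from below by Borel simple functions supported in $F(S)$,
\[
	\int_{F(S)}h\,\ud\HM^{d}\ \le\ L^{d}\int_{S}(h\circ F)\,\ud\HM^{d}\qquad\text{for all Borel }h\ge0 .
\]
Crucially this needs no rectifiability of $S$, which is why the possible unrectifiability of $E$ will never intervene.

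Since $\diam\Delta=\sqrt{k}\,\ell(\Delta)$ and $x\in\Delta$, we have $\Delta\subseteq\cball\!\big(x,\sqrt{k}\,\ell(\Delta)\big)$, so the radial retraction $\psi\colon\partial\cball\!\big(x,\sqrt{k}\,\ell(\Delta)\big)\to\partial\Delta$ from $x$ is well defined, and by construction of $p_{\Delta,x}$ and $\Pi_{\Delta,x}$ one has $\Pi_{\Delta,x}=\psi\circ p_{\Delta,x}$ on $\Delta\setminus\{x\}$. The crux is the estimate $\Lip(\psi)\le k^{1/2}$. Taking $x$ to be the centre of $\Delta$ (the relevant case; a general $x\in\tfrac12\Delta$ is the same computation done face by face and only changes numerical constants), write $\partial\Delta$ in polar coordinates about $x$ as $\{x+\rho_{\Delta}(\theta)\theta:\theta\in S^{k-1}\}$ with $\rho_{\Delta}(\theta)=\tfrac12\ell(\Delta)/\max_{i}|\theta_{i}|$; since $\theta\mapsto\max_{i}|\theta_{i}|$ is $1$-Lipschitz on $S^{k-1}$ and $\ge k^{-1/2}$, one gets $\rho_{\Delta}\le\tfrac12 k^{1/2}\ell(\Delta)$ and $\Lip(\rho_{\Delta})\le\tfrac12 k\,\ell(\Delta)$. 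As $\psi(x+\rho_{s}\theta)=x+\rho_{\Delta}(\theta)\theta$ with $\rho_{s}:=\sqrt{k}\,\ell(\Delta)$, the bound $|\rho_{\Delta}(\theta)\theta-\rho_{\Delta}(\theta')\theta'|\le(\rho_{\Delta}(\theta)+\Lip\rho_{\Delta})|\theta-\theta'|$ together with $|\theta-\theta'|=\rho_{s}^{-1}|(x+\rho_{s}\theta)-(x+\rho_{s}\theta')|$ gives $\Lip(\psi)\le\tfrac12(k^{1/2}+k)\ell(\Delta)/\rho_{s}=\tfrac12+\tfrac12 k^{1/2}\le k^{1/2}$.

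It remains to handle $p_{\Delta,x}$, which is not Lipschitz on $\Delta$ but satisfies $\|Dp_{\Delta,x}(z)\|=\rho_{s}/|z-x|$. I would fix $\varepsilon>0$, cut $\Delta\setminus\{x\}$ into the half-open shells $\Gamma_{j}=\{(1+\varepsilon)^{-j-1}\rho_{s}<|z-x|\le(1+\varepsilon)^{-j}\rho_{s}\}\cap\Delta$ ($j\in\mathbb{Z}$), and use the elementary inequality $\big|\tfrac{z-x}{|z-x|}-\tfrac{z'-x}{|z'-x|}\big|\le 2|z-z'|/\min(|z-x|,|z'-x|)$, which shows $p_{\Delta,x}|_{\Gamma_{j}}$ is Lipschitz with constant $\le 2(1+\varepsilon)\inf_{\Gamma_{j}}\|Dp_{\Delta,x}\|$. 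Applying the push-forward inequality on each $\Gamma_{j}$ and summing over $j$ (countable subadditivity of $\HM^{d}$ on the left, disjointness of the $\Gamma_{j}$ on the right, then $\varepsilon\downarrow0$) yields, for all Borel $h\ge0$ and all $S\subseteq\Delta\setminus\{x\}$,
\[
	\int_{p_{\Delta,x}(S)}h\,\ud\HM^{d}\ \le\ 2^{d}\int_{S}(h\circ p_{\Delta,x})\,\|Dp_{\Delta,x}\|^{d}\,\ud\HM^{d}.
\]

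Combining the last two displays (with $S=E$, $h=g\circ\psi$) and $\Pi_{\Delta,x}=\psi\circ p_{\Delta,x}$ gives
\[
	\int_{\Pi_{\Delta,x}(E)}g\,\ud\HM^{d}\le\Lip(\psi)^{d}\!\!\int_{p_{\Delta,x}(E)}\!(g\circ\psi)\,\ud\HM^{d}\le 2^{d}k^{d/2}\!\!\int_{E}\!(g\circ\Pi_{\Delta,x})\,\|Dp_{\Delta,x}\|^{d}\,\ud\HM^{d},
\]
which is the assertion. I expect the genuine difficulty to lie in the geometric Lipschitz bound $\Lip(\psi)\le k^{1/2}$ for the sphere-to-cube retraction — that is precisely where the factor $k^{d/2}$ is produced — and, for off-centre $x\in\tfrac12\Delta$, in the face-by-face bookkeeping needed to keep the constant as clean as $2^{d}k^{d/2}$; the push-forward inequality, the shell decomposition, and the measurability checks in the change-of-measure steps are all routine.
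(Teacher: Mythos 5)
Your factorization $\Pi_{\Delta,x}=\psi\circ p_{\Delta,x}$ through the circumscribed sphere is exactly the paper's proof (there $\psi$ is called $f_{\Delta,x}$), and the change-of-measure steps are the same Lipschitz-image bounds; the only structural difference is where the constant is spent. The paper charges nothing for the smooth map $p_{\Delta,x}$ (it gets the pushforward inequality with density $\|Dp_{\Delta,x}\|^d$ exactly) and puts the entire factor $2^dk^{d/2}$ into $\Lip(f_{\Delta,x})\leq 2\sqrt{k}$, obtained from $\cball(x,\ell(\Delta)/2)\subseteq\Delta\subseteq\cball(x,\sqrt{k}\,\ell(\Delta))$; you instead prove the $p_{\Delta,x}$-step by hand with a shell decomposition at the cost of an extra $2^d$, and compensate with the sharper claim $\Lip(\psi)\leq\sqrt{k}$. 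Your shell argument and measurability remarks are fine, and it is a genuine merit that no rectifiability of $E$ is used.

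The weak point is the off-centre case, and you cannot dismiss it as ``only changes numerical constants'': the lemma carries the specific constant $2^dk^{d/2}$ and is invoked with $x=x_{\Delta}$ an essentially arbitrary point of $\tfrac12\Delta$ (it is chosen in a positive-measure set $Y_{\Delta}$ and off $\overline{E}$, not at the centre). You prove $\Lip(\psi)\leq\sqrt{k}$ only for the centred case; for $x$ near a corner of $\tfrac12\Delta$ the inradius drops to $\ell(\Delta)/4$ while boundary points lie as far as $\tfrac34\sqrt{k}\,\ell(\Delta)$ from $x$, and your own radial-function estimate then only yields $\Lip(\psi)\leq\tfrac34+\tfrac94\sqrt{k}$, so your bookkeeping produces a constant of order $(9/2)^dk^{d/2}$ rather than $2^dk^{d/2}$ --- precisely because the $2^d$ from the shell step has already consumed all the slack. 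The gap is easy to close: in Euclidean space the radial estimate holds without the factor $2$, namely $\bigl|\,u/|u|-v/|v|\,\bigr|\leq |u-v|/\min(|u|,|v|)$ (expand squares and use $\langle u,v\rangle\leq|u||v|$), so your shells give local Lipschitz constant $(1+\varepsilon)\inf_{\Gamma_j}\|Dp_{\Delta,x}\|$ and the pushforward inequality for $p_{\Delta,x}$ with no extra factor; then any bound $\Lip(\psi)\leq 2\sqrt{k}$ valid for all $x\in\tfrac12\Delta$ suffices. (In fairness, the paper's inclusion $\cball(x,\ell(\Delta)/2)\subseteq\Delta$ is itself literally correct only for the centre --- in general one only has radius $\ell(\Delta)/4$ --- but since the paper loses nothing on the $p_{\Delta,x}$ step, and since only a dimensional constant is needed downstream, that slip is harmless there; your version as written is the one that misses the stated constant by a definite margin.)
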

\begin{proof}
	Let $f_{\Delta,x}:\partial \cball(x,\sqrt{k}\ell(\Delta)) \to \partial
	\Delta$ be the mapping defined by 
	\[ f_{\Delta,x}(z)=\{x+t(z-x):t\geq 0\}\cap \partial \Delta,\ \forall z\in
		\partial 
		\cball(0,\sqrt{n}\ell(\Delta)).
	\]
	Then $f_{\Delta,x}$ is biLipschitz and $ f_{\Delta,x}\circ p_{\Delta,x}=
	\Pi_{\Delta,x} $. Since $\cball(x,\ell(\Delta)/2)\subseteq
	\Delta\subseteq \cball(x,\sqrt{k}\ell(\Delta))$, we get that
	$\Lip(f_{\Delta,x})\leq 2\sqrt{k}$. Since $\HM^d(f_{\Delta,x}(Z))\leq
	\Lip(f_{\Delta,x})^d\HM^d(Z)$ for any set $Z\subseteq \partial 
	\cball(x,\sqrt{k}\ell(\Delta))$, we get that 
	\[
		\int g \ud \HM^d \mr f_{\Delta,x}(Z)\leq
		\Lip(f_{\Delta,x})^d\int g\circ
		f_{\Delta,x} \ud \HM^d \mr Z.
	\]
	Since $p_{\Delta,x}$ is a $C^{\infty}$ mapping and
	$\Pi_{\Delta,x}=f_{\Delta,x}\circ p_{\Delta,x}$, we get that
	\[
		\begin{aligned}
			\int g \ud \HM^d \mr \Pi_{\Delta,x}(E)&\leq \Lip(f_{\Delta,x})^d\int
			g\circ f_{\Delta,x} \ud \HM^d \mr p_{\Delta,x}(E)\\
			&\leq \Lip(f_{\Delta,x})^d\int (g\circ
			f_{\Delta,x}\circ p_{\Delta,x}) \|Dp_{\Delta,x}\|^d \ud \HM^d \mr E\\
			&\leq 2^{d}k^{d/2} \int (g\circ
			\Pi_{\Delta,x})\|Dp_{\Delta,x}\|^d \ud \HM^d \mr E.
		\end{aligned}
	\]

\end{proof}
\begin{lemma}\label{le:prom4}
	Let $X\subseteq \mathbb{R}^m$ be any set. Let $Y\subseteq \mathbb{R}^k$ be a
	convex compact set, $\tilde{f}: X\to \partial Y$ be a Lipschitz mapping. Then
	we can find Lipschitz mapping $f:\mathbb{R}^m\to Y$ such that $f\vert_{X}=
	\tilde{f}$ and $\Lip(f)=\Lip(\tilde{f})$. Moreover any Lipschitz mapping,
	which is defined on the $k$-skeleton of an $n$-cube $\Delta\subseteq
	\mathbb{R}^{n}$, and which maps each $k$-faces of $\Delta$ to itself, admit
	a Lipschitz extension which maps each $i$-faces of $\Delta$ to itself,
	$k\leq i\leq n$.  
\end{lemma}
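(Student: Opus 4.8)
The plan is to handle the two assertions separately: the first is the essential tool, and the second is an induction over the skeleta of $\Delta$ built on top of it.

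For the first assertion I would reduce to Kirszbraun's extension theorem followed by a metric projection. Since $\tilde f$ is $L$-Lipschitz with $L:=\Lip(\tilde f)$ and takes values in $\mathbb{R}^k$, Kirszbraun's theorem furnishes an $L$-Lipschitz map $g\colon\mathbb{R}^m\to\mathbb{R}^k$ with $g|_X=\tilde f$; it then remains only to push the values into $Y$ without enlarging the constant. For that I would compose with the nearest-point projection $\pi_Y\colon\mathbb{R}^k\to Y$, which is $1$-Lipschitz when $Y$ is convex — and the sets $Y$ that actually occur below (balls, cubes, faces of cubes) are convex — and set $f:=\pi_Y\circ g$. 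Then $\Lip(f)\le\Lip(g)=L$, and since every value $\tilde f(x)\in\partial Y\subseteq Y$ is fixed by $\pi_Y$ one has $f|_X=\tilde f$, hence also $\Lip(f)\ge\Lip(\tilde f)$ and $\Lip(f)=\Lip(\tilde f)$. (Some convexity of $Y$ cannot be dispensed with — the statement already fails for a two-point $Y$ — so I read ``compact'' here as ``compact convex'', which is all that the sequel needs.)

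For the second assertion I would induct on the dimension $i$ of the skeleton. Write $\Delta^{(i)}$ for the union of the $i$-dimensional faces of $\Delta$. The claim to carry along the induction is that there is a Lipschitz map $g_i\colon\Delta^{(i)}\to\Delta^{(i)}$, extending the given map $g$ on $\Delta^{(k)}$, that sends every $i$-face of $\Delta$ into itself; for $i=k$ this is the hypothesis. Given $g_i$, fix an $(i+1)$-face $F$ of $\Delta$: its relative boundary $\partial F$ is a union of $i$-faces, so $\partial F\subseteq\Delta^{(i)}$, and since each of those $i$-faces is mapped into itself we get $g_i(\partial F)\subseteq\partial F$. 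Regarding $F$ as a compact convex body in its affine hull (identified with $\mathbb{R}^{i+1}$), I apply the first assertion with $X=\partial F$, $Y=F$, $\tilde f=g_i|_{\partial F}$ to obtain a Lipschitz extension $h_F\colon F\to F$ of $g_i|_{\partial F}$. Defining $g_{i+1}$ to be $h_F$ on each $(i+1)$-face $F$ is unambiguous because two distinct $(i+1)$-faces meet only in lower-dimensional faces lying in $\Delta^{(i)}$, where all the $h_F$ already agree with $g_i$; the resulting $g_{i+1}$ extends $g_i$, maps each $(i+1)$-face into itself, and is Lipschitz on each face. Since the skeleton $\Delta^{(i+1)}$ is quasiconvex with a constant depending only on $n$ (any two of its points are joined inside it by a path of comparable length), a map that is Lipschitz on each face is Lipschitz on the whole skeleton, with the constant enlarged only by a dimensional factor; hence $g_{i+1}$ is Lipschitz. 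Iterating from $i=k$ up to $i=n$ yields a Lipschitz map $\Delta=\Delta^{(n)}\to\Delta$ that extends $g$ and carries each $i$-face into itself for $k\le i\le n$; if one wants the extension defined on all of $\mathbb{R}^n$, compose a Kirszbraun extension of it with the projection onto $\Delta$, which changes nothing on $\Delta$.

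I expect the main obstacle to be conceptual rather than computational: the only nontrivial external input is Kirszbraun's theorem, which is what delivers the \emph{equality} of Lipschitz constants in the first assertion. Within the induction, the subtle point is that each extension step must keep $\partial F$ mapped into $\partial F$ so that the property ``every face goes into itself'' survives to the next dimension — this is precisely why the first assertion has target $\partial Y$ and not merely $Y$ — together with verifying that the face-by-face definitions glue to a globally Lipschitz map, for which the quasiconvexity of cube skeleta is exactly what is required.
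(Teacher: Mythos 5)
Your proposal is correct and takes essentially the same route as the paper: a Kirszbraun extension followed by the nearest-point projection onto $Y$ (the paper's own proof likewise invokes convexity of $Y$, which, as you rightly observe, is tacitly assumed though absent from the statement), and the ``moreover'' part by induction over the skeleta. The paper dismisses that second part with ``follows from induction,'' so your explicit treatment of the boundary-preservation, the face-by-face gluing, and the quasiconvexity of the skeleton simply fills in details the paper omits.
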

\begin{proof}
	Let $g:\mathbb{R}^m\to \mathbb{R}^k$ be a Lipschitz extension of $\tilde{f}$
	such that $\Lip(g)=\Lip(\tilde{f})$. Since $Y$ is
	convex, for any $z\in \mathbb{R}^k$, there is a unique point $x_z\in Y$
	such that $\dist(z,Y)=|z-x_z|$. We denote by $\rho:\mathbb{R}^k\to Y$ the 
	mapping defined by $\rho(z)=x_z$, and then define 
	\[
		f=\rho\circ g.
	\]
	Since $\Lip(\rho)=1$ and $\rho\vert_{Y}=\id_{Y}$, we see that $f$ is a
	Lipschitz extension of $\tilde{f}$ with the same Lipschitz constant. The
	moreover part then follows from induction.

\end{proof}
\begin{lemma}
	\label{le:prom3}
	Let $\Delta\subseteq \mathbb{R}^k$ be a $k$-cube. For any $x\in
	\frac{1}{2}\Delta$, setting $\cball=\cball(x,\sqrt{k}\ell(\Delta))$, there
	exists a Lipschitz mapping $f_{\Delta,x}:\cball\to \Delta$ such that
	$f_{\Delta,x}\vert_{\partial \cball}:\partial
	\cball\to \partial\Delta$ is biLipschitz, $f_{\Delta,x}\circ
	p_{\Delta,x}\vert_{\Delta}=\Pi_{\Delta,x}$ and 
	\begin{equation}\label{eq:prom30}
		\Lip(f_{\Delta,x})=\Lip(f_{\Delta,x}\vert_{\partial \cball })
		\leq 2^{d}k^{d/2}.
	\end{equation}
\end{lemma}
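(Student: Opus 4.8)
The plan is to recycle the radial (cone) projection from $x$ that already appears inside the proof of Lemma~\ref{le:prom2}, and then to thicken it from the sphere $\partial\cball$ to the whole ball $\cball$ by the convexity–preserving extension of Lemma~\ref{le:prom4}. Concretely, I would first let $g\colon\partial\cball\to\partial\Delta$ be the map
\[
  g(z)=\{x+t(z-x):t\ge 0\}\cap\partial\Delta ,\qquad z\in\partial\cball .
\]
Since $x\in\tfrac12\Delta$, the cube $\Delta$ is convex with $\cball(x,\ell(\Delta)/2)\subseteq\Delta\subseteq\cball(x,\sqrt k\,\ell(\Delta))=\cball$, so every ray from $x$ meets $\partial\Delta$ in exactly one point and $g$ is a biLipschitz homeomorphism onto $\partial\Delta$; the computation in the proof of Lemma~\ref{le:prom2} (where this very map is written $f_{\Delta,x}$) gives $\Lip(g)\le 2\sqrt k$, and since $d\ge 1$ this yields $\Lip(g)\le 2^{d}k^{d/2}$. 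Moreover $g\circ p_{\Delta,x}\vert_{\Delta}=\Pi_{\Delta,x}$: for $z\in\Delta\setminus\{x\}$ the point $p_{\Delta,x}(z)$ lies on $\partial\cball$ and on the ray from $x$ through $z$, and $g$ carries it back along that same ray to the unique point of $\partial\Delta$ on it, which is $\Pi_{\Delta,x}(z)$ by definition.

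Next I would extend $g$ to $\cball$ without enlarging its Lipschitz constant. Apply Lemma~\ref{le:prom4} with $X=\partial\cball$, the compact convex set $Y=\Delta$, and $\tilde f=g$; this produces a Lipschitz map $F\colon\mathbb{R}^k\to\Delta$ with $F\vert_{\partial\cball}=g$ and $\Lip(F)=\Lip(g)$. I would then set $f_{\Delta,x}:=F\vert_{\cball}\colon\cball\to\Delta$. By construction $f_{\Delta,x}\vert_{\partial\cball}=g$ is biLipschitz onto $\partial\Delta$, and $\Lip(f_{\Delta,x})=\Lip(F)=\Lip(g)=\Lip(f_{\Delta,x}\vert_{\partial\cball})\le 2^{d}k^{d/2}$, which is exactly \eqref{eq:prom30}. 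Finally, because $p_{\Delta,x}$ maps $\mathbb{R}^k\setminus\{x\}$, in particular $\Delta\setminus\{x\}$, into $\partial\cball$, we get $f_{\Delta,x}\circ p_{\Delta,x}\vert_{\Delta}=g\circ p_{\Delta,x}\vert_{\Delta}=\Pi_{\Delta,x}$, as required.

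There is essentially no serious obstacle here: all the geometry is already contained in Lemma~\ref{le:prom2} (the radial–projection Lipschitz bound $2\sqrt k$, which is precisely where the hypothesis $x\in\frac12\Delta$ is used, through $\cball(x,\ell(\Delta)/2)\subseteq\Delta$) and in Lemma~\ref{le:prom4} (a Lipschitz map into a convex target extends without increasing the Lipschitz constant), so Lemma~\ref{le:prom3} is merely the bookkeeping that repackages the sphere map of Lemma~\ref{le:prom2} as a ball map. The only mild point to verify is that restricting the extension $F$ back to $\cball$ preserves the three listed properties, which it does, since each of them either refers only to the values on $\partial\cball$ or is a composition with $p_{\Delta,x}$, whose image lies in $\partial\cball$.
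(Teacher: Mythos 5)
Your proposal is correct and follows essentially the same route as the paper: define the radial map $z\mapsto\{x+t(z-x):t\geq 0\}\cap\partial\Delta$ on $\partial\cball$, bound its Lipschitz constant by the inclusion $\cball(x,\ell(\Delta)/2)\subseteq\Delta\subseteq\cball$, extend it to the ball via Lemma~\ref{le:prom4} without increasing the constant, and read off $f_{\Delta,x}\circ p_{\Delta,x}\vert_{\Delta}=\Pi_{\Delta,x}$ from the fact that $p_{\Delta,x}$ lands in $\partial\cball$. Your explicit remark that $2\sqrt{k}\leq 2^{d}k^{d/2}$ for $d\geq 1$ just makes transparent a step the paper leaves implicit.
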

\begin{proof}
	For any $z\in \partial \cball$, we define
	\[
		h(z)=\{x+t(z-x):t\geq 0\}\cap \partial \Delta.
	\]
	Then $g:\partial \cball\to \partial \Delta$ is a biLipschitz mapping with
	$\Lip(h)\leq 2^dk^{d/2}$. Let $f_{\Delta,x}: \cball\to \Delta$ be a
	Lipschitz extension of $h$ with $\Lip(\tilde{h})=\Lip(h)$ as in Lemma
	\ref{le:prom4}. Then we see that
	$f_{\Delta,x}\circ p_{\Delta,x}\vert_{\Delta}=h\circ
	p_{\Delta,x}\vert_{\Delta}=\Pi_{\Delta,x} $ and \eqref{eq:prom30} holds.
\end{proof}
\begin{definition}
	Let $\mathscr{F}$ be a collection of finitely many $n$-cubes in
	$\mathbb{R}^n$, let $\mathscr{F}_m$ be the collection all
	$m$-faces of cubes in $\mathscr{F}$. We say that $\mathscr{F}$ is 
	admissible if 
	\begin{itemize}
		\item for any $\Delta_0,\Delta_1\in \mathscr{F}_m$, either
			$\mathring{\Delta}_0\cap \mathring{\Delta}_1=\emptyset$,
			$\Delta_0\subseteq \Delta_1$ or $\Delta_1\subseteq \Delta_0$, where
			$\mathring{\Delta}_i=\Delta_i\setminus \cup \mathscr{F}_{m-1}$, and
		\item for any $\Delta_0,\Delta_1\in \mathscr{F}_m$, if
			$\Delta_1\subsetneq \Delta_0$, then there exist $\{\Delta_i\}_{2\leq
			i\leq I}\subseteq \mathscr{F}_m$ such that $\Delta_0=\cup_{1\leq i\leq
			I} \Delta_i$ and $\mathring{\Delta}_i\cap \mathring{\Delta}_j=\emptyset$ for any $1\leq
			i<j\leq I$.
	\end{itemize}
\end{definition}
\begin{theorem}[Deformation theorem]\label{thm:ffp}
	Let $\mathscr{F}$ be a collection of $n$-cubes in $\mathbb{R}^{n}$ which is
	admissible. Set $D=\cup \mathscr{F}$ and $\mathscr{F}_m'=\{\Delta\in 
	\mathscr{F}_m:\mathring{\Delta }\subseteq \intr(D)\}$ for $0\leq m\leq n$. There exists $c_0=c_0(n,d)\geq 1$ such that
	for any $d$-set $E\subseteq \mathbb{R}^n$ with $\HM^{d+1}\big(\overline{E}\cap
	\intr(D)\big)<\infty$, setting $E\cap \intr(D)=E^{rec}\sqcup E^{irr}$ such that
	$E^{rec}$ is $d$-rectifiable and $\HM^d$-measurable, $E^{irr}$ is purely 
	$d$-unrectifiable and $\HM^d$-measurable, we can find a Lipschitz mapping 
	$\phi:\mathbb{R}^n\to \mathbb{R}^n$, an upper semi-continuous function $\lambda:
	\mathbb{R}^n\to [0,\infty)$, $\mathscr{S}\subseteq 
	\mathscr{F}_{d}'$ and an open set $W\subseteq \mathbb{R}^n$ such that
	\begin{itemize}
		\item $\phi(\Delta)\subseteq\Delta$ for any $\Delta\in \mathscr{F}_m$,
			$\phi\vert_{\mathbb{R}^n\setminus D}= \id_{\mathbb{R}^n\setminus D}$,
			$\overline{E}\subseteq W$,
		\item  $(\cup
			\mathscr{S})\cap \intr(D)\subseteq\phi\big(\overline{E}\big)\cap
			\intr(D)\subseteq \phi(W)\cap
			\intr(D)\subseteq \cup \mathscr{F}_d'$, $\phi(W)\cap
			\intr(D)\setminus \cup\mathscr{S}\subseteq \cup \mathscr{F}_{d-1}'$,
		\item $\HM^d\big(\phi(E^{irr})\cap \intr(D)\big)=0$, $\lambda(x)=1$ for any $x\in
			\mathbb{R}^n\setminus D$,
		\item for any $\HM^d$-measurable $d$-set $Z\subseteq W$ and 
			$\HM^d$-measurable function $g:\mathbb{R}^n\to [0,\infty)$, 
			\begin{equation}\label{eq:ffp1}
				\int g \ud \HM^d \mr \phi(Z) \leq \int (g\circ \phi) \cdot \lambda \ud 
				\HM^d \mr Z,
			\end{equation}
		\item for any $\mathscr{K}\subseteq \mathscr{F}_n$, setting $K=\cup
			\mathscr{K}$ and $A=\cup\{\Delta\in \mathscr{F}_n: \Delta\cap \partial
			D\neq \emptyset\}$, we have that 
			\begin{equation}\label{eq:ffp2}
				\HM^d(\phi(E\cap K))\leq \int_{E\cap K}
				\lambda(x) \ud \HM^d(x)\leq c_0 \Big(\HM^d(E^{rec}\cap
				K)+\HM^d(E^{irr}\cap K\cap A)\Big).
			\end{equation}
	\end{itemize}
	In particular, if $E\cap \intr(D)$ is relatively closed in
	$\intr(D)$ and 
	\begin{equation}\label{eq:ffp3}
		\HM^d(E^{rec}\cap \intr(D))\leq c_0^{-1}
		\min\left\{\ell(\Delta)^d:\Delta\in \mathscr{F}_n'\right\},
	\end{equation}
	then 
	\begin{equation}\label{eq:ffp4}
		\phi(W)\cap \intr(D)\subseteq \cup \mathscr{F}_{d-1}'.
	\end{equation}
\end{theorem}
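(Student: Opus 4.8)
The plan is to run a Federer--Fleming type projection, one skeleton at a time, from dimension $n$ down to $d$, using Lemmas~\ref{le:prom0}--\ref{le:prom3} both to select good projection centres and to keep track of the $\HM^d$-distortion.

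\emph{The one-face step.} Fix an $m$-face $\Delta\in\mathscr{F}_m$ with $m\geq d+1$ and let $E_\Delta$ be the part of the current set sitting inside $\Delta$, split as $E_\Delta^{rec}\sqcup E_\Delta^{irr}$. Viewing $\Delta\cong\mathbb{R}^m$, I would apply Lemma~\ref{le:prom1} to obtain a centre $x_\Delta\in\tfrac12\Delta\setminus\overline{E_\Delta}$ together with the $C^\infty$ map $q_{\Delta,x_\Delta}$, which agrees with the radial projection $p_{\Delta,x_\Delta}$ off a small ball around $x_\Delta$ disjoint from $\overline{E_\Delta}$ (so it is smooth on a neighbourhood of $\overline{E_\Delta}$; I also shrink that ball so it lies inside $\mathring\Delta$), sends $E_\Delta^{irr}$ to a purely $d$-unrectifiable set, and obeys the mass inequality \eqref{eq:prom1}. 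Post-composing with the Lipschitz map $f_{\Delta,x_\Delta}$ of Lemma~\ref{le:prom3} pushes everything onto $\partial\Delta$, restricts to the identity on $\partial\Delta$, and, by Lemmas~\ref{le:prom1} and \ref{le:prom2}, transports $\HM^d$-mass onto $\partial\Delta$ with a blow-up factor at most a constant $c_m=c_m(m,d)$. Since $\mathscr{F}$ is admissible, these face-by-face maps glue along the shared $(m-1)$-faces into a single Lipschitz map that fixes $\mathbb{R}^n\setminus\bigcup\mathscr{F}_m$ and each lower-dimensional face, hence deforms $\bigcup\mathscr{F}_m$ into $\bigcup\mathscr{F}_{m-1}$.

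\emph{Iteration and the outputs.} Composing the maps for $m=n,n-1,\dots,d+1$ gives a Lipschitz $\phi$ which fixes $\mathbb{R}^n\setminus D$, maps each face into itself, and carries $\overline E\cap\intr(D)$ into $\bigcup\mathscr{F}_d'$ (a point reaching a boundary face lands on $\partial D$, not in $\intr D$). The neighbourhood $W$ is built inductively: at each level shrink it to a smaller open neighbourhood of the image of the previous one on which all newly chosen centres stay outside their modification balls, so $\phi$ is smooth on $W$ and $\phi(W)\cap\intr(D)$ follows $\phi(\overline E)$ into $\bigcup\mathscr{F}_d'$. The function $\lambda$ is the pointwise product, along the composition, of the local factors $\|Dq_{\Delta_j,x_j}\|^d$ times the constants $2^d m^{d/2}$ of Lemma~\ref{le:prom3}, set equal to $1$ off $D$ and to an upper semicontinuous majorant elsewhere; it is upper semicontinuous because each $q_{\Delta_j,x_j}$ is smooth near the relevant set and the finitely many exceptional sets are closed, and \eqref{eq:ffp1} is precisely the composition of Lemmas~\ref{le:prom1}--\ref{le:prom3} over the $n-d$ levels. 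That $\HM^d(\phi(E^{irr})\cap\intr(D))=0$ holds because pure $d$-unrectifiability is preserved at each level (Lemma~\ref{le:prom1} with Lemma~2.2 of \cite{FFL:2019}) and a purely $d$-unrectifiable subset of the $d$-rectifiable set $\bigcup\mathscr{F}_d'$ is $\HM^d$-null. Restricting the composed estimate to $E\cap K$ gives \eqref{eq:ffp2} with $c_0=\prod_{k=d+1}^{n}c_k$ inflated by a combinatorial factor counting overlapping admissible faces; the extra term $\HM^d(E^{irr}\cap K\cap A)$ is present only because for unrectifiable pieces in boundary $n$-cubes the radial projection need not decrease $\HM^d$, while interior unrectifiable pieces carry no $\HM^d$-mass into the $d$-skeleton.

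\emph{The subcollection $\mathscr{S}$ and \eqref{eq:ffp4}.} Let $\mathscr{S}=\{\Delta\in\mathscr{F}_d':\Delta\subseteq\phi(\overline E)\}$; the inclusions asserted for $\mathscr{S}$ hold because on each interior $d$-face not covered by $\phi(\overline E)$ one radially projects one further dimension from a point of its interior missed by $\phi(\overline E)$ --- legitimate since, $E\cap\intr(D)$ being relatively closed, $\phi(\overline E)\cap\intr(D)$ is relatively closed and such a face contains a whole ball disjoint from $\phi(\overline E)$. Under \eqref{eq:ffp3}, for any $\Delta\in\mathscr{F}_d'$, using that $\phi(E^{irr})$ carries no $\HM^d$-mass in $\intr(D)$ and then \eqref{eq:ffp1}--\eqref{eq:ffp2},
\[
\begin{aligned}
\HM^d\big(\phi(\overline E)\cap\mathring\Delta\big)&=\HM^d\big(\phi(E^{rec})\cap\mathring\Delta\big)\leq\int_{E^{rec}\cap\intr(D)}\lambda\,\ud\HM^d\\
&\leq c_0\,\HM^d\big(E^{rec}\cap\intr(D)\big)<\HM^d(\Delta),
\end{aligned}
\]
so $\Delta\not\subseteq\phi(\overline E)$; hence $\mathscr{S}=\emptyset$ and the inclusion $\phi(W)\cap\intr(D)\setminus\bigcup\mathscr{S}\subseteq\bigcup\mathscr{F}_{d-1}'$ is exactly \eqref{eq:ffp4}. (The strict inequality uses the slack in \eqref{eq:ffp3}, comparing $\ell(\Delta)$ for interior $d$-faces with $\min\{\ell(\Delta):\Delta\in\mathscr{F}_n'\}$ through admissibility; alternatively, a relatively open full-measure subset of $\mathring\Delta$ would have to be all of $\mathring\Delta$, which is excluded since the projection centre inside $\Delta$ is omitted.)

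\emph{Main obstacle.} I expect the delicate part to be the simultaneous bookkeeping: propagating Lipschitz continuity of $\phi$, upper semicontinuity of $\lambda$ with the composed inequality \eqref{eq:ffp1}, and preservation of pure unrectifiability, through all $n-d$ levels, while the underlying cube family is only admissible --- faces of different sizes, nested, some meeting $\partial D$ --- rather than a uniform dyadic grid, so that the face-by-face maps must be glued with care and the exceptional centre-sets handled without spoiling measurability or the estimates.
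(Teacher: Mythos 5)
Your proposal follows essentially the same route as the paper's proof: iterated Federer--Fleming radial projections built from Lemmas \ref{le:prom0}--\ref{le:prom3} (centres chosen off the closure of the current set, the maps $f_{\Delta,x_\Delta}\circ q_{\Delta,x_\Delta}$ glued face by face and extended cube-preservingly via Lemma \ref{le:prom4}), the open set $W$ obtained by pulling back the good sets through the stages, $\lambda$ the product of the local factors $2^{d}m^{d/2}\|Dq_{\Delta,x_\Delta}\|^{d}$, $\mathscr{S}$ the interior $d$-faces entirely covered by the image with one further radial projection inside the uncovered ones, and the ``in particular'' clause by comparing $c_0\HM^d\big(E^{rec}\cap\intr(D)\big)$ with $\ell(\Delta)^d$. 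The only place you go beyond the paper is the borderline equality case of \eqref{eq:ffp3}, which your parenthetical does not fully settle, but the paper's own proof passes over this step entirely and all of its later applications invoke the hypothesis with strict inequality.
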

\begin{proof}
	For any $\Delta\in \mathscr{F}_n$, let $x_{\Delta}$ and
	$q_{\Delta,x_{\Delta}}$ be as in Lemma \ref{le:prom1}, let
	$f_{\Delta,x_{\Delta}}$ be as in Lemma \ref{le:prom3} with $x=x_{\Delta}$.
	We define the mapping $\phi_1:\mathbb{R}^n\to \mathbb{R}^n$ and
	$\psi_1:\mathbb{R}^n\setminus\{x_{\Delta}:\Delta\in \mathscr{F}_n\}\to
	\mathbb{R}^n$ by
	\[
		\phi_1(z)=\begin{cases}
			f_{\Delta,x_{\Delta}}\circ q_{\Delta,x_{\Delta}}(z),&z\in \Delta,\ \Delta\in \mathscr{F}_n,\\
			z,& \text{ otherwise},
		\end{cases}
	\]
	and 
	\[
		\psi_1(z)=\begin{cases}
			f_{\Delta,x_{\Delta}}\circ p_{\Delta,x_{\Delta}}(z),&z\in
			\Delta\setminus\{x_{\Delta}\},\ \Delta\in \mathscr{F}_n,\\
			z,& \text{ otherwise}.
		\end{cases}
	\]
	Then we see that $\phi_1$ is Lipschitz, and by setting
	$r_{\Delta}=\frac{1}{2}\dist(x_{\Delta},E)$, $\psi_1$ coincide with $\phi_1$ on the set
	\[
		U_1=\mathbb{R}^n\setminus 
		\medcup_{\Delta\in \mathscr{F}_n}\cball\left(x_{\Delta},r_{\Delta}\right).
	\]
	Put $E_0=E$, $E_1=\psi_1(E_0)$, and $E_1\cap \intr(D)=E_1^{rec}\sqcup 
	E_1^{irr}$. Since $\phi_1$ is Lipschitz, we see that
	\[
		\overline{E}_1=\overline{\psi_1(E)}=\overline{\phi_1(E)}=
		\phi_1\Big(\overline{E}\Big),
	\]
	and $\HM^{d+1}(\overline{E}_1)=0$.
	Similarly, for any $\Delta\in \mathscr{F}_{n-1}$ with
	$\mathring{\Delta}\subseteq \intr(D)$, we can find $x_{\Delta}\in
	\frac{1}{2}\Delta\setminus \overline{E}_1$, such that
	$\Pi_{\Delta,x_{\Delta}}(E_1^{irr}\cap \Delta)$
	is purely $d$-unrectifiable. Define the mapping $\psi_2$
	by 
	\[
		\psi_2(z)=\begin{cases}
			f_{\Delta,x_{\Delta}}\circ
			p_{\Delta,x_{\Delta}}(z)=\Pi_{\Delta,x_{\Delta}}(z),&z\in
			\Delta\setminus\{x_{\Delta}\},\ \Delta\in \mathscr{F}_{n-1}',\\
			z,& z\in \mathbb{R}^n\setminus \intr(D).
		\end{cases}
	\]
	We see that $\psi_2$ is not define on whole space $\mathbb{R}^n$, which is only 
	defined on 
	\[
		\dmn(\psi_2)=\left(\mathbb{R}^n\setminus
		\intr(D)\right)\cup \medcup\left\{\Delta\setminus \{x_{\Delta}\}: \Delta\in 
		\mathscr{F}_{n-1}'\right\},
	\]
	but fortunately it is well defined on $\psi_1(U_1)$ at least, and
	$\im(\psi_2)\cap \intr(D)\subseteq \cup\{\Delta:\Delta\in \mathscr{F}_{n-1}\}$.
	Put \[
		r_{\Delta}=\frac{1}{2}\dist(x_{\Delta},E_1) 
		\text{ and }
		U_2=(\mathbb{R}^n\setminus
		\intr(D)) \cup \medcup\left\{\Delta\setminus
		\cball(x_{\Delta},r_{\Delta}): \Delta\in 
		\mathscr{F}_{n-1}'\right\}.
	\]
	We see that
	$\psi_2\vert_{U_2}$ is Lipschitz. Let $\phi_2:\mathbb{R}^n\to \mathbb{R}^n$
	be a Lipschitz extension of $\psi_2\vert_{U_2}$, whose existence is 
	guaranteed by Lemma \ref{le:prom3} and Lemma \ref{le:prom4}, such that
	$\psi_2(\Delta)\subseteq \Delta$ for any $\Delta\in \mathscr{F}_n$.  

	By induction, we can define $\psi_i$, $U_i$, $E_i$ and $\phi_i:\mathbb{R}^n\to
	\mathbb{R}^n$ for $1\leq i\leq n-d$ in a 
	similar way, which indeed are satisfying that
	\[
		\dmn(\psi_i)=(\mathbb{R}^n\setminus
		\intr(D))\cup \medcup\{\Delta\setminus \{x_{\Delta}\}: \Delta\in 
		\mathscr{F}_{n-i+1}'\},
	\]
	\[
		\psi_i(z)=\begin{cases}
			\Pi_{\Delta,x_{\Delta}}(z),&\
			\Delta\in \mathscr{F}_{n-i+1}',\\
			z,&z\in \mathbb{R}^n\setminus \intr(D),
		\end{cases}
	\]
	\[
		E_i=\psi_i(E_{i-1}),\ r_{\Delta}=\dist(x_{\Delta},E_{i-1}),
	\]
	\[
		U_i=\left(\mathbb{R}^n\setminus
		\intr(D)\right)\cup \medcup\left\{\Delta\setminus
		\cball(x_{\Delta},r_{\Delta}): \Delta\in 
		\mathscr{F}_{n-i+1}'\right\},
	\]
	$\psi_i(E_{i-1}^{irr}\cap \Delta)$ is purely $d$-unrectifiable for any
	$\Delta\in \mathscr{F}_{n-i+1}$,
	$\psi_i\vert_{U_i}$ is Lipschitz, and $\phi_i$ is a Lipschitz extension of
	$\psi_i\vert_{U_i}$ such that $\phi_i(\Delta)\subseteq \Delta$ for any
	$\Delta\in \cup_{n-i+1\leq j\leq n} \mathscr{F}_{j}$
	Since $E_{n-d}=\psi_{n-d}(E_{n-d-1})$ and $\im(\psi_{n-d})\cap
	\intr(D)\subseteq \cup \{\Delta:\Delta\in \mathscr{F}_{d}\}$, we get that
	\[
		E_{n-d}\cap \intr(D)\subseteq \cup \{\Delta:\Delta\in \mathscr{F}_{d}'\}.
	\]
	For any $\Delta\in \mathscr{F}_{d}'$, if $\Delta\setminus
	\overline{E}_{n-d}\neq \emptyset$, then we pick one point $x_{\Delta}\in
	\Delta\setminus \overline{E}_{n-d} $. Let $\phi_{n-d+1}:\mathbb{R}^n\to
	\mathbb{R}^n$ be a Lipschitz mapping such that
	$\phi_{n-d+1}\vert_{\Delta}=\id_{\Delta}$ if $\Delta\in \mathscr{F}_{d}'$ and
	$\Delta\setminus \overline{E}_{n-d}=\emptyset$ or $\Delta\subseteq \partial
	D$; $\phi_{n-d+1}\vert_{\Delta}=q_{\Delta,x_{\Delta}}$ if $\Delta\setminus
	\overline{E}_{n-d}\neq \emptyset$. Put $E_{n-d+1}=\phi_{n-d+1}(E_{n-d})$,
	$r_{\Delta}=\dist(x_{\Delta},\overline{E}_{n-d})$, and 
	\[
		U_{n-d+1}= \Big(\mathbb{R}^n\setminus \intr(D)\Big) \cup \Big(\cup
		\mathscr{F}_{d}'\Big)\setminus \Big( \cup \big\{\cball(x_{\Delta}, r_{\Delta}):
		\Delta\in \mathscr{F}_d', \Delta\setminus
		\overline{E}_{n-d}\neq\emptyset\big\}\Big).
	\]
	Put $W_{n-d+1}=U_{n-d+1}$, and $W_i=U_i\cap \phi_i^{-1}(W_{i+1})$ for $1\leq
	i\leq n-d$. Since $U_{i}$ is open in $\big(\mathbb{R}^n\setminus \intr(D)\big) 
	\cup \big(\cup \mathscr{F}_{n-i+1}'\big)$, $1\leq i\leq n-d+1$, we get that 
	$W_i$ is open in $U_i$. Since $U_1$ is open in $\mathbb{R}^{n}$, we get that
	$W_1$ is open in $\mathbb{R}^{n}$. Put $\lambda_{n-d+1}\equiv 1$, and define
	$\lambda_i:\cup \mathscr{F}_{n-i+1}\to \mathbb{R}$, $2\leq i\leq n-d$, by 
	\[
		\lambda_i(z)=\begin{cases}
			2^d(n-i+1)^{d/2}\|Dq_{\Delta,x_{\Delta}}(z)\|^d\cdot
			\lambda_{i+1}(\phi_{n-i+1}(z)), &z\in
			\mathring{\Delta}, \Delta\in \mathscr{F}_{n-i+1}',\\
			2^d(n-i+1)^{d/2}\cdot 4^d(n-i+1)^{d/2} \cdot\lambda_{i+1}(z),& 
			z\in \Delta\setminus \mathring{\Delta},\Delta\in \mathscr{F}_{n-i+1}',\\
			1,& z\in \cup \mathscr{F}_{n-i+1}\setminus \cup \mathscr{F}_{n-i+1}'\\
		\end{cases}
	\]
	Let $\lambda_1:\mathbb{R}^n\to \mathbb{R}$ be defined by 
	\[
		\lambda_1(z)=\begin{cases}
			2^dn^{d/2}\|Dq_{\Delta,x_{\Delta}}(z)\|^d\cdot
			\lambda_{2}(\phi_1(z)), &z\in
			\mathring{\Delta}, \Delta\in \mathscr{F}_{n},\\
			2^dn^{d/2}\cdot 4^dn^{d/2} \cdot\lambda_{2}(z),& 
			z\in \Delta\setminus \mathring{\Delta},\Delta\in \mathscr{F}_{n},\\
			1, & z\in \mathbb{R}^n\setminus D.
		\end{cases}
	\]

	Define $\phi=\phi_{n-d+1}\circ \phi_{n-d}\circ \cdots \circ \phi_1$,
	$W=W_1$ and $\lambda=\lambda_1$. Then $\phi\vert_{\mathbb{R}^n\setminus
	D}=\id_{\mathbb{R}^n\setminus D}$, $\phi(\Delta) \subseteq \Delta $ for
	any $\Delta\in \mathscr{F}_m$, $0\leq m\leq n$, $\overline{E}\subseteq W$,
	$\phi(E)=E_{n-d+1}$ and $\phi\big(\overline{E}\big)=\overline{E}_{n-d+1}$. 
	Put $\mathscr{S}=\{\Delta\in \mathscr{F}_d:\Delta\setminus
	\overline{E}_{n-d+1}=\emptyset\}$. Then $\cup \mathscr{S}\subseteq
	\overline{E}_{n-d+1}$ and $\phi(W)\subseteq
	\phi_{n-d+1}(U_{n-d+1})$, thus $\phi(W)\cap \intr(D)\subseteq \cup
	\mathscr{F}_d'$ and $\phi(W)\cap \intr(D)\setminus \cup
	\mathscr{S}\subseteq \cup \mathscr{F}_{d-1}'$. By our construction of
	$\psi_i$, we see that  $ \psi_{n-d}\circ \cdots \circ \psi_1(E^{irr})$ is 
	purely $d$-unrectifiable, but $\intr(D)\cap \psi_{n-d}\circ \cdots \circ
	\psi_1(E^{irr})$ is contained in $\cup \mathscr{F}_{d}'$, thus $\intr(D)\cap
	\psi_{n-d}\circ \cdots \circ \psi_1(E^{irr})$ must be a set of $\HM^d$
	measure 0. So we get that $\HM^d(\intr(D)\cap \phi(E^{irr}))=0$. Indeed,
	$\lambda\vert_{\mathbb{R}^n\setminus D}\equiv 1$ and \eqref{eq:ffp1}
	clearly follows from the construction of $\lambda$ and Lemma \ref{le:prom2}.
	For any $\Delta\in \mathscr{F}_m'$, $2\leq m\leq n-d$, since $x_{\Delta}\in
	\frac{1}{2}\Delta$, we see that 
	\[
		\|Dq_{\Delta,x_{\Delta}}(z)\|=\frac{\sqrt{m}\ell(\Delta)}{|z-x_{\Delta}|}\leq
		4\sqrt{m},\ \forall z\in \Delta\setminus \mathring{\Delta},
	\]
	thus the functions $\lambda_i$, $1\leq i\leq n-d$, are upper
	semi-continuous. In particular, $\lambda=\lambda_1$ is upper
	semi-continuous. For any $1\leq i\leq n-d$ and $\Delta\in \mathscr{F}_{n-i+1}'$,
	setting $\phi_0=\id_{\mathbb{R}^n}$, since $\HM^d(E^{irr}\cap
	\mathring{\Delta})=0$, by Lemma \ref{le:prom0}, there is a 
	constant $\zeta_i>0$, which only depends on $n-i+1$ and $d$, such that 
	\[
		\int_{\mathring{\Delta}\cap E}\lambda_i(\phi_{i-1}\circ\cdots\circ
		\phi_0(z))\ud \HM^d(z)=\int_{\mathring{\Delta}\cap E^{rec}}
		\lambda_i(\phi_{i-1}\circ\cdots\circ \phi_0(z))\ud \HM^d(z)\leq 
		\zeta_i \HM^d(E^{rec}\cap \mathring{\Delta});
	\]
	if $\Delta\in \mathscr{F}_{n-i+1}\setminus \mathscr{F}_{n-i+1}'$, by Lemma 2.1, we
	only get that 
	\[
		\int_{\mathring{\Delta}\cap E}\lambda_i(\phi_{i-1}\circ\cdots\circ
		\phi_0(z))\ud \HM^d(z)\leq \zeta_i \HM^d(E\cap \mathring{\Delta}),
	\]
	hence \eqref{eq:ffp2} holds with $c_0=\max\{1,\zeta_1\}$.

\end{proof}
\begin{lemma}
	\label{le:ffpa}
	Let $c_0=c_0(n,d)\geq 1$ be the constant in Theorem \ref{thm:ffp}.
	There is a constant $c_1=c_1(n,d)>0$ such that 
	for any $d$-set $E\subseteq \mathbb{R}^n$ and $0<r<\rho$ with $\HM^{d+1}
	(\overline{E}\cap \oball(x,\rho)) <\infty$, there is a 
	Lipschitz mapping $\phi:\mathbb{R}^n\to \mathbb{R}^n$ such that
	$\phi(\oball(x,\rho))\subseteq \oball(x,\rho)$,
	$\phi\vert_{\mathbb{R}^n\setminus
	\oball(x,\rho)}=\id_{\mathbb{R}^n\setminus \oball(x,\rho)}$, 
	\begin{equation}\label{eq:ffpa1}
		\HM^d(\phi(E\cap \oball(x,\rho)))\leq c_0\left(\HM^d(E\cap
		\ann(x,r,\rho))+\HM(E^{rec}\cap \oball(x,\rho))\right),
	\end{equation}
	and
	\begin{equation}\label{eq:ffpa2}
		\HM^d(\phi(E\cap \oball(x,\rho)))\leq c_0\HM^d(E\cap \ann(x,r,
		\rho))+\frac{c_1\rho^n}{(\rho-r)^{n-d}},
	\end{equation}
	where $\ann(x,r,\rho)=\oball(x,\rho)\setminus \cball(x,r)$, $E^{rec}$ is the
	$d$-rectifiable part of $E$. Moreover, if $r>\rho/4$ and 
	\begin{equation}\label{eq:ffpa3}
		\HM^d(E^{rec}\cap \oball(x,\rho))<c_0^{-1}\big(3 \sqrt{n}\big)^{d-n}(\rho-r)^d,
	\end{equation}
	then we can require $\phi$ satisfying that
	\begin{equation}\label{eq:ffpa4}
		\HM^d(\phi(E\cap \oball(x,\rho)))\leq c_0\HM^d(E\cap \ann(x,r,
		\rho)).
	\end{equation}
\end{lemma}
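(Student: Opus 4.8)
The plan is to deduce Lemma \ref{le:ffpa} from the Deformation Theorem (Theorem \ref{thm:ffp}) by a suitable choice of an admissible family $\mathscr{F}$ adapted to the ball $\oball(x,\rho)$. Concretely, I would cover the closed ball $\cball(x,\rho)$ by a grid of $n$-cubes of sidelength comparable to $\rho-r$, say $\ell = (\rho-r)/(3\sqrt n)$ so that each cube has diameter $\sqrt n\,\ell = (\rho-r)/3$; one then throws away the cubes that do not meet $\oball(x,\rho)$ and, if necessary, slightly modifies the grid near $\partial\oball(x,\rho)$ so that the resulting finite family $\mathscr{F}$ is admissible and its union $D$ satisfies $\cball(x,r+(\rho-r)/3)\subseteq D$ and $D\subseteq\oball(x,\rho)$ — the point of the choice $\ell\le(\rho-r)/(3\sqrt n)$ being exactly that no cube meeting the inner ball $\cball(x,r)$ can reach out to $\partial\oball(x,\rho)$, while every cube lies inside $\oball(x,\rho)$. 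Applying Theorem \ref{thm:ffp} to $E$ and this $\mathscr{F}$ produces $\phi$, $\lambda$, $W$ with $\phi(\oball(x,\rho))\subseteq\phi(D)\cup(\oball(x,\rho)\setminus D)\subseteq\oball(x,\rho)$ and $\phi=\id$ off $D$, hence off $\oball(x,\rho)$; note the paper's map $\phi$ is the identity off $D$ but on $\oball(x,\rho)\setminus D$ the set $E$ is untouched, so for the estimates we may restrict attention to $E\cap D$, and $E\cap(\oball(x,\rho)\setminus D)$ contributes to both sides of the inequalities equally.

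Next I would extract \eqref{eq:ffpa1}. Take $\mathscr{K}=\mathscr{F}_n$, so $K=D$ and, by the choice of grid, $A=\cup\{\Delta\in\mathscr{F}_n:\Delta\cap\partial D\neq\emptyset\}$ is contained in the annulus $\ann(x,r,\rho)$ (a cube touching $\partial D$ is within distance $\sqrt n\,\ell=(\rho-r)/3$ of $\partial D\subseteq\overline{\oball(x,\rho)}\setminus\oball(x,r+(\rho-r)/3)$, hence disjoint from $\cball(x,r)$). Then \eqref{eq:ffp2} gives
\[
\HM^d(\phi(E\cap D))\le c_0\bigl(\HM^d(E^{rec}\cap D)+\HM^d(E^{irr}\cap A)\bigr)\le c_0\bigl(\HM^d(E^{rec}\cap\oball(x,\rho))+\HM^d(E\cap\ann(x,r,\rho))\bigr),
\]
and adding back the fixed contribution of $E\cap(\oball(x,\rho)\setminus D)\subseteq E\cap\ann(x,r,\rho)$ yields \eqref{eq:ffpa1} (the coefficient $c_0\ge1$ absorbs the extra term).

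For \eqref{eq:ffpa2} I would instead bound $\HM^d(E^{rec}\cap\oball(x,\rho))$ crudely: by the second half of \eqref{eq:ffp2}, or more simply by pushing $E^{rec}$ onto the $d$-skeleton and using that $\phi$ maps each $d$-cube into itself, one controls $\HM^d(\phi(E^{rec}\cap D))$ by the total area $\sum\{c\,\ell(\Delta)^d:\Delta\in\mathscr{F}_d'\}$ of the $d$-faces inside $\intr(D)$, which is $\le c_1\rho^n/(\rho-r)^{n-d}$ since there are $O((\rho/\ell)^n)$ cubes each with $O(1)$ faces of $\HM^d$-measure $\ell^d$, and $\ell\sim\rho-r$. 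Combining this with the annulus term from \eqref{eq:ffpa1} gives \eqref{eq:ffpa2}. Finally, for the "moreover" part with $r>\rho/4$: then $\oball(x,\rho)\subseteq\cball(x,3\sqrt n\,r)$-type scaling makes $\ell(\Delta)=(\rho-r)/(3\sqrt n)$ satisfy $\ell(\Delta)^d\ge(3\sqrt n)^{-d}(\rho-r)^d$, so hypothesis \eqref{eq:ffpa3}, namely $\HM^d(E^{rec}\cap\oball(x,\rho))<c_0^{-1}(3\sqrt n)^{d-n}(\rho-r)^d$, can be arranged to imply \eqref{eq:ffp3}, i.e. $\HM^d(E^{rec}\cap\intr(D))\le c_0^{-1}\min\{\ell(\Delta)^d:\Delta\in\mathscr{F}_n'\}$; invoking \eqref{eq:ffp4} we get $\phi(W)\cap\intr(D)\subseteq\cup\mathscr{F}_{d-1}'$, a $(d-1)$-dimensional set, so $\HM^d(\phi(E\cap D))=\HM^d(\phi(E^{rec}\cap D))$ with $\phi(E^{rec}\cap D)$ now forced into the $(d-1)$-skeleton — of $\HM^d$-measure zero — leaving only the part of $E$ never moved, which sits in $\ann(x,r,\rho)$, giving \eqref{eq:ffpa4}.

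The main obstacle I expect is bookkeeping around the boundary grid: a regular cubical grid will not have $D$ equal to a ball, so one must either accept $D$ slightly larger/smaller than $\oball(x,\rho)$ and check the containments $\cball(x,r)\cap A=\emptyset$ and $D\subseteq\oball(x,\rho)$ survive, or subdivide boundary cubes to build an admissible $\mathscr{F}$ with $\intr(D)$ close to $\oball(x,\rho)$ — in either case verifying admissibility and the two geometric containments (inner ball misses the boundary cubes; all cubes stay in the open ball) is the delicate point, and the constants $c_1$ and the exponent $n-d$ in \eqref{eq:ffpa2} come out of counting faces, which is routine once the grid is fixed.
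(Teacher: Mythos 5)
Your construction and your treatment of \eqref{eq:ffpa1}--\eqref{eq:ffpa2} are essentially the paper's own proof: the same grid of sidelength $\delta=(\rho-r)/(3\sqrt{n})$, the same application of Theorem \ref{thm:ffp} with \eqref{eq:ffp2}, and the same count of $O\big((\rho/\delta)^n\big)$ cubes each contributing $O(\delta^d)$ of $d$-skeleton, giving $c_1\rho^n/(\rho-r)^{n-d}$. The boundary bookkeeping you flag as the delicate point is in fact not an issue: one keeps exactly the grid cubes \emph{contained} in $\oball(x,\rho)$ (not those merely meeting it, otherwise $D\not\subseteq\oball(x,\rho)$ and $\phi$ would move points outside the ball), and a finite family of equal-size grid cubes is automatically admissible, with no modification near $\partial\oball(x,\rho)$ needed. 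Your accounting for \eqref{eq:ffpa1} does close, because $E^{irr}\cap A$ and the untouched set $E\cap(\oball(x,\rho)\setminus D)$ are disjoint subsets of $E\cap\ann(x,r,\rho)$, so the two contributions add up to at most $c_0\HM^d(E\cap\ann(x,r,\rho))$ rather than being "absorbed"; it would be worth saying this explicitly.

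The genuine gap is in the moreover part. You obtain \eqref{eq:ffpa4} by deducing \eqref{eq:ffp3} from \eqref{eq:ffpa3} and then invoking \eqref{eq:ffp4}. But the final clause of Theorem \ref{thm:ffp} carries an extra hypothesis that Lemma \ref{le:ffpa} does not give you: $E\cap\intr(D)$ must be relatively closed in $\intr(D)$, whereas the lemma is stated for an arbitrary $d$-set $E$. This hypothesis is not cosmetic: \eqref{eq:ffp4} is a statement about $\phi(W)$ for an open set $W\supseteq\overline{E}$, and for a non-closed $E$ (say, a countable dense set of small $\HM^d$-measure) $\overline{E}$, hence $W$, can fill all of $D$, so $\phi(W)\cap\intr(D)$ is forced to contain $d$-faces and \eqref{eq:ffp4} fails. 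The paper's proof of \eqref{eq:ffpa4} does not pass through \eqref{eq:ffp4}; it argues measure-theoretically through the subfamily $\mathscr{S}$ of $d$-faces supplied by Theorem \ref{thm:ffp}: up to the $(d-1)$-skeleton, $\phi(E)\cap\intr(D)$ sits inside $\cup\mathscr{S}$, the contribution of each face of $\mathscr{S}$ is $0$ or the full $\delta^d$, and the bound $\HM^d(\phi(E)\cap\intr(D))\le c_0\HM^d(E^{rec}\cap\oball(x,\rho))<\delta^d$ coming from \eqref{eq:ffpa3} then forces this measure to vanish. Relatedly, the step you leave as "can be arranged to imply \eqref{eq:ffp3}" is precisely the comparison that must be checked and is not automatic: \eqref{eq:ffp3} requires $c_0^{-1}\delta^d=c_0^{-1}(3\sqrt{n})^{-d}(\rho-r)^d$, while \eqref{eq:ffpa3} supplies only $c_0^{-1}(3\sqrt{n})^{d-n}(\rho-r)^d$, and $(3\sqrt{n})^{d-n}\le(3\sqrt{n})^{-d}$ only when $2d\le n$; so the exponents do not match in general, and this is exactly where $\delta$ versus $\rho-r$ (and the so far unused hypothesis $r>\rho/4$) has to be handled explicitly rather than deferred.
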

\begin{proof}
	Put $\delta=(\rho-r)/(3\sqrt{n})$. For any $i_1, \cdots, i_n\in \mathbb{Z}$,
	we denote by $C_{i_1,\cdots,i_n}$ the cube defined by $(\delta i_1,\cdots,
	\delta i_n)+[0,\delta]^n$. Let $\mathscr{F}$ be the collection of all cubes
	$C_{i_1,\cdots,i_n}\subseteq \oball(x,\rho)$, and let
	$\mathscr{K}=\{\Delta\in \mathscr{F}:\Delta\cap \cball(x,r)\neq \emptyset\}$.
	By Theorem \ref{thm:ffp}, we can find Lipschitz mapping
	$\phi:\mathbb{R}^n\to \mathbb{R}^n$ and $\mathscr{S}\subseteq
	\mathscr{F}_{d}'$ such that $\phi(\Delta)\subseteq \Delta$
	for any $\Delta\in \mathscr{F}$, $\phi(x)=x$ for any $x\in \mathbb{R}^n\setminus
	\cup \mathscr{F}$, $\phi(E)\cap \intr(\cup \mathscr{F}_n)\subseteq \cup
	\mathscr{F}_d$, $\phi(\overline{E})\cap \intr(\cup \mathscr{F}_n)\setminus
	\cup \mathscr{S}\subseteq \mathscr{F}_{d-1}$ and 
	\[
		\HM^d(\phi(E\cap (\cup \mathscr{K})))\leq c_0\HM^d(E^{rec}\cap (\cup
		\mathscr{K})).
	\]
	Immediately, we get that \eqref{eq:ffpa1} holds since 
	\[
		\HM^d(\phi(E\cap \oball(x,\rho)\setminus \cup \mathscr{K}))\leq
		c_0\HM^d(E\cap \oball(x,\rho)\setminus \cup \mathscr{K})\leq
		c_0\HM^d(E\cap \ann(x,r,\rho)).
	\]
	Since $\phi(E\cap \cball(x,r))\subseteq \cup \mathscr{F}_{d}'$, and the
	number of cubes in $\mathscr{F}$ is no more than $\omega_n \rho^d/\delta^n$,
	we get that \eqref{eq:ffpa2} hold for some constant $c_1$ which only depends
	on $n$ and $d$. Since $\phi(E)\cap (\cup \mathscr{F}_n')\subseteq \cup \mathscr{S}
	$ and $\HM^d(\phi(E)\cap (\cup \mathscr{F}_n'))\leq c_0\HM^d(E^{rec}\cap
	\oball(x,\rho))<\delta^d$, but each element of $\mathscr{S}$ has $\HM^d$
	measure 0 or $\delta^d$, we get that $\HM^d(\phi(E)\cap (\cup
	\mathscr{F}_n'))=0$, and \eqref{eq:ffpa4} holds.

\end{proof}
\section{Quasiminimal sets on $\mathbb{R}^n$}
\begin{definition} \label{def:deformations}
	For any set $\Omega\subseteq \mathbb{R}^n$ and any open set $U\subseteq
	\mathbb{R}^n$, if $\Omega\cap U$ is nonempty and relatively closed in $U$,
	we denote by $\mathcal{D}(\Omega, U)$ the collection of all Lipschitz 
	mappings $\varphi:\Omega\to \Omega$ which is homotopic to $\id_{\Omega}$,
	and that the closure of $\{x\in \Omega: \varphi(x)\neq x\}\cup \varphi( 
	\{x\in \Omega: \varphi(x)\neq x\})$ is compact and
	contained in $U$. Indeed, we call such a Lipschitz mapping $\varphi$ a
	deformations on $\Omega$ in $U$.
\end{definition}

\begin{definition}\label{def:quasimimimal}
	Let $\Omega,U$ be as in Definition \ref{def:deformations}. For any
	nondecreasing functions $M:[0,\infty)\to [1,\infty]$ and $\varepsilon:
	[0,\infty )\to [0,\infty]$, we define $\QM_d(\Omega,U,M,\varepsilon)$ to be 
	the collection of sets $E\subseteq \Omega$ such that $\HM^d\mr E$ is locally
	finite, $\HM^d(E\cap \oball(x,r))>0$ for any $x\in E\cap U$ and $r>0$, 
	$E\cap U$ is relatively closed in $U$, and for any $\varphi\in
	\mathcal{D}(\Omega,U)$,
	\[
		\HM^d(E\cap W_{\varphi})\leq M(r)\HM^d(\varphi(E\cap
		W_{\varphi}))+\varepsilon(r),
	\]
	where $W_{\varphi}=\{x\in \Omega: \varphi(x)\neq x\}$, $r=\diam(W_{\varphi}\cap \varphi(W_{\varphi}))$.
\end{definition}

If $M\equiv 1$ and $\varepsilon\equiv 0$, then elements in
$\QM_d(\Omega,U,M, \varepsilon)$ are called minimal in $\Omega\cap U$.

If $M\equiv 1$, and $\varepsilon(r)=h(r)r^d$, where $h$ is a gauge function, 
then elements in $\QM_d(\mathbb{R}^n,U,M, \varepsilon)$ are called almost 
minimal sets in $U$ with gauge function $h$,  see Definition 4.3 in \cite{David:2009}.

If there is a $\delta>0$ such that $M(r)\equiv M\geq 1$  and $\varepsilon(r)
\equiv 0$ for $0<r<\delta$, then elements in $\QM_d(\mathbb{R}^n,U,M, \varepsilon)$
are called $(U,M,\delta)$-quasiminimal, see Definition 2.4 in \cite{David:2003}.

If there is a $\delta>0$ and a constant $h\in (0,1)$ such that $M(r)\equiv 
M\geq 1$  and $\varepsilon(r) =h r^d$ for $0<r<\delta$, then elements in
$\QM_d(\mathbb{R}^n,U,M, \varepsilon)$ are called generalized quasiminimal, see
Definition 2.10 in \cite{David:2009}.

If $\delta>0$ and $\varepsilon:(0,\delta)\to [0,\infty)$ is a nondecreasing
function such that $\varepsilon(0+)=0$, then any element in
$\QM_d(\mathbb{R}^n,U,1+\varepsilon(r),0)$ is called
$(\mathbf{M},\varepsilon,\delta)$-minimal set, see for example Chapter 11 in
\cite{Morgan:2009}.

We do not assume previously $\varepsilon(0+)=0$ in our definition. It is easy
to see from the definition that 
\[
	\QM_d(\Omega,U,M,\varepsilon)\subseteq \QM_d(\Omega',U',M',\varepsilon'),
\]
if $\Omega'\subseteq \Omega$, $U'\subseteq U$, $M\leq M'$ and $\varepsilon\leq
\varepsilon'$.
\begin{lemma}
	\label{le:irr}
	For any $E\in \QM_d(\mathbb{R}^n,U,M,\varepsilon)$ and open set
	$O\subseteq \mathbb{R}^n$, we have that
	\[
		\HM^d(E^{irr}\cap O)\leq \varepsilon(\diam O),
	\]
	where $E^{irr}$ is the purely $d$-unrectifiable part of $E\cap U$.
\end{lemma}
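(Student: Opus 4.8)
The plan is to exploit the fact that a purely $d$-unrectifiable set can be destroyed, up to $\HM^d$-measure zero, by a single radial projection of the kind furnished by the deformation theorem, and then to test the quasiminimality inequality against such a deformation. First I would fix an open set $O$ and, after an exhaustion argument, reduce to the case where $O$ is bounded with $\overline{O}\subseteq U$ and $\HM^d(E\cap O)<\infty$; indeed it suffices to prove the bound with $O$ replaced by an arbitrary relatively compact open $O'\Subset O$ with $\overline{O'}\subseteq U$, since then letting $O'\uparrow O$ and using that $\diam O'\le \diam O$ together with monotonicity of $\varepsilon$ gives the claim (and if $\diam O=\infty$ there is nothing to prove after we observe $\varepsilon$ is $[0,\infty]$-valued and nondecreasing).

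Next I would apply the deformation theorem (Theorem \ref{thm:ffp}), or rather its localized consequence, to a grid of small $n$-cubes covering $\overline{O'}$ so that $D=\cup\mathscr{F}$ satisfies $\overline{O'}\subseteq\intr(D)$ and $D\Subset U$. This produces a Lipschitz map $\phi:\mathbb{R}^n\to\mathbb{R}^n$ with $\phi=\id$ outside $D$, $\phi(\Delta)\subseteq\Delta$ for every face $\Delta$, an upper semi-continuous $\lambda$ with $\lambda\equiv 1$ off $D$, and—crucially—$\HM^d(\phi(E^{irr})\cap\intr(D))=0$, together with the Jacobian bound \eqref{eq:ffp1}. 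Since $\phi$ is Lipschitz and homotopic to $\id_{\mathbb{R}^n}$ (it is built from radial projections, each homotopic to the identity rel the part of space it fixes), and since $\{x:\phi(x)\neq x\}$ has closure contained in the compact set $D\Subset U$, the restriction $\varphi:=\phi|_\Omega$ (with $\Omega=\mathbb{R}^n$ here) lies in $\mathcal{D}(\mathbb{R}^n,U)$.

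Now I would plug $W_\varphi$, which is contained in $D$, into the quasiminimality inequality:
\[
	\HM^d(E\cap W_\varphi)\leq M(r)\,\HM^d(\varphi(E\cap W_\varphi))+\varepsilon(r),
	\qquad r=\diam\big(W_\varphi\cap\varphi(W_\varphi)\big).
\]
Here $r\le \diam D$, which we may take as close to $\diam O'$ as we like by refining the grid and shrinking the collar, so $\varepsilon(r)\le\varepsilon(\diam O')+o(1)$. For the first term on the right: writing $E\cap W_\varphi = (E^{rec}\cap W_\varphi)\sqcup(E^{irr}\cap W_\varphi)$, the image $\varphi(E^{irr}\cap W_\varphi)\subseteq\phi(E^{irr})\cap\intr(D)$ has $\HM^d$-measure zero, so $\HM^d(\varphi(E\cap W_\varphi))=\HM^d(\varphi(E^{rec}\cap W_\varphi))$. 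On the left, $\HM^d(E\cap W_\varphi)\ge \HM^d(E^{irr}\cap O')$ once the grid is fine enough that $O'\subseteq W_\varphi$ up to an $\HM^d$-null set—this holds because $\phi$ moves every point of $\intr(D)$ off the $d$-skeleton's complement, but more carefully one arranges $O'\subseteq\intr(D)$ and notes that $\phi$ is not the identity on $E\cap O'$ except on a null set since $\phi(E^{irr})\cap\intr(D)$ is $\HM^d$-null while $E^{irr}\cap O'$ need not be. Rearranging,
\[
	\HM^d(E^{irr}\cap O') \le M(r)\,\HM^d\big(\varphi(E^{rec}\cap W_\varphi)\big) - \HM^d(E^{rec}\cap W_\varphi) + \varepsilon(\diam O')+o(1),
\]
and here is the main obstacle: the leftover term $M(r)\HM^d(\varphi(E^{rec}))-\HM^d(E^{rec})$ need not be small, since $M\ge 1$ and $\varphi$ can blow up the rectifiable part by a bounded factor. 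The way around this is to \emph{not} move $E^{rec}$ at all in the relevant region: one should first carve out the rectifiable part. The correct testing deformation is built so that it acts only near $E^{irr}$; concretely, before running the grid construction one intersects with a slightly larger open set and uses that $E^{rec}$ is rectifiable to ignore it, or—cleaner—one applies the deformation theorem not to all of $E$ but uses \eqref{eq:ffp1} with $Z=E^{rec}\cap W_\varphi$ and $g=1$ to get $\HM^d(\varphi(E^{rec}\cap W_\varphi))\le\int_{E^{rec}\cap W_\varphi}\lambda$, then chooses the grid so fine and the cubes containing mass of $E^{rec}$ so few (using $\HM^d(E^{rec})<\infty$ and absolute continuity) that $\int_{E^{rec}\cap W_\varphi}\lambda\,\ud\HM^d$ and $\HM^d(E^{rec}\cap W_\varphi)$ are both $\le\eta$ for any prescribed $\eta>0$. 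Then the difference $M(r)\HM^d(\varphi(E^{rec}))-\HM^d(E^{rec})\le (M(\diam O')-1)\eta + M(\diam O')\,o(1)$, which tends to $0$ as $\eta\to 0$ and the grid refines. Letting $\eta\to0$, then $O'\uparrow O$, yields $\HM^d(E^{irr}\cap O)\le\varepsilon(\diam O)$, completing the proof.
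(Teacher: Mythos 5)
You have the right overall idea---test the quasiminimality inequality against a deformation that annihilates the purely unrectifiable part, so that only $\varepsilon(\diam O)$ and a rectifiable leftover survive---and you correctly identify the real obstacle, namely the term $M(r)\,\HM^d(\varphi(E^{rec}\cap W_\varphi))-\HM^d(E^{rec}\cap W_\varphi)$. But the mechanism you propose to make that leftover small does not work. If the grid is chosen so that $O'\subseteq\intr(D)$ (which you need in order that $W_\varphi$ capture $\HM^d$-almost all of $E^{irr}\cap O'$), then $W_\varphi$ also contains $\HM^d$-almost all of $E^{rec}\cap O'$: refining the mesh does not shrink $E^{rec}\cap W_\varphi$, and ``absolute continuity'' gives nothing because the deformation acts on \emph{every} cube covering $O'$. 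One cannot simultaneously have $E^{irr}\cap O'\subseteq W_\varphi$ up to a null set and $\HM^d(E^{rec}\cap W_\varphi)\leq\eta$ by any choice of grid over $O'$; nor can ``acting only near $E^{irr}$'' be arranged by taking a neighborhood of the closure of $E^{irr}$, since that closure may contain a large portion of $E^{rec}$. A workable version of your idea would require an extra selection step you do not supply, e.g.\ an open set $V\supseteq E^{irr}\cap O'$ with $\HM^d(E^{rec}\cap V)\leq\eta$ obtained from outer regularity of $\HM^d\mr E^{rec}$, together with control of the boundary-layer term $\HM^d(E^{irr}\cap K\cap A)$ appearing in \eqref{eq:ffp2}, which your sketch never addresses.

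The paper resolves the difficulty by a pointwise density argument rather than a global grid. At $\HM^d$-a.e.\ $x\in E^{irr}$ one has $\Theta^d(E^{rec},x)=0$ and $\Theta^{\ast d}(E^{irr},x)\geq 2^{-d}$; around such points one chooses small balls with $\HM^d(E^{rec}\cap\cball(x,r))\leq\tau\,\omega_d r^d$ and $\HM^d(E^{irr}\cap\cball(x,r))>2^{-(d+1)}\omega_d r^d$, extracts finitely many disjoint balls by the Vitali covering theorem, and inside each ball applies the ball version of the deformation theorem (Lemma \ref{le:ffpa}), which collapses $E$ at cost $c_0\big(\HM^d(E\cap\ann(x,r',r))+\HM^d(E^{rec}\cap\cball(x,r'))\big)$. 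Letting the annulus width tend to zero, the rectifiable leftover is bounded by $2^{d+1}\tau$ times the unrectifiable mass in the same balls, hence can be absorbed into the left-hand side, giving $\big(1-2^{d+1}c_0 M(\diam O)\tau\big)\sum_i\HM^d(E^{irr}\cap\oball_i)\leq\varepsilon(\diam O)$ and the lemma as $\tau\to 0$. Your argument has no substitute for this density comparison, so as written it has a genuine gap at its decisive step.
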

\begin{proof}
	We Write $E\cap U=E^{irr}\sqcup E^{rec}$, where $E^{rec}$ is
	$d$-rectifiable and $\HM^d$-measurable, $E^{irr}$ is purely
	$d$-unrectifiable and $\HM^d$-measurable. Then for $\HM^d$-a.e. $x\in
	E^{irr}$, 
	\[
		\Theta^d(E^{rec},x)=0 \text{ and }\Theta^{\ast d}(E^{irr},x)\geq
		2^{-d},
	\]
	and we denote by $E_0$ the collection of such point $x$. For any $\tau>0$,
	and $x\in E_0$, we can find a sequence of decreasing positive numbers
	$\{\rho_{x,m}\}$ such that $\rho_{x,m}\to 0$ as $m\to \infty$, $\HM^d(E\cap
	\partial \cball(x,\rho_{x,m}))=0$,
	\[
		\frac{\HM^d(E^{rec}\cap \cball(x,r))}{\omega_d r^d}\leq \tau, \ \forall
		0<r\leq \rho_{x,1}
	\]
	and 
	\[
		\frac{\HM^d(E^{irr}\cap \cball(x,\rho_{x,m}))}{\omega_d r^d}>\frac{1}{2^{d+1}}.
	\]
	We see that $\{\cball(x,\rho_{x,m}):x\in
	E_0, m\geq 1, \rho_{x,m}<\dist(x,\mathbb{R}^n\setminus O)\}$ is a Vitali
	covering of $E_0\cap O$. By Vitali covering theorem, we can find finite many
	disjoint balls $\{\cball_i\}_{1\leq i\leq m}$ such that 
	\[
		\HM^d\left(E_0\setminus \cup_{i=1}^m \cball_i\right)\leq \tau
		\HM^{d}(E_0).
	\]
	For any $0<r<\rho<\dist(x,\mathbb{R}^n\setminus O)$, by Lemma 
	\ref{le:ffpa}, there exists Lipschitz mapping $\phi_x:\mathbb{R}^n\to
	\mathbb{R}^n$ such that $\phi_x(\cball(x,\rho))\subseteq
	\cball(x,\rho)$, $\phi_x(y)=y$ for $y\notin \oball(x,\rho)$ and 
	\[
		\HM^d(\varphi (E\cap \oball(x,\rho)))\leq c_0 \big(\HM^d(E\cap
		\ann(x,r,\rho))+\HM(E^{rec}\cap \cball(x,r))\big).
	\]
	Let $\varphi:\mathbb{R}^n\to \mathbb{R}^n$ be the mapping given by 
	\[
		\varphi(y)=\begin{cases}
			y,& y\in \mathbb{R}^n\setminus \cup_{i=1}^m \cball_i\\
			\phi_{x_i}(y),& y\in \cball_i.
		\end{cases}
	\]
	Then $\varphi$ is Lipschitz, thus for any $0<\delta<1/2$, setting
	$\cball_i=\cball(x_i,\rho_i)$, $\oball_i=\oball(x_i,\rho_i)$ and
	$\rho_i'=(1-\delta)\rho_i$, we get that
	\[
		\begin{aligned}
			\HM^d(E\cap \cup_{i=1}^m\oball_i)&\leq M(\diam O)\HM^d(\varphi(E\cap \cup_{i=1}^m
			\oball_i))+\varepsilon(\diam O)\\
			&\leq  c_0M(\diam O)\sum_{i=1}^m\left(\HM^d(E\cap
			\ann(x_i,\rho_i',\rho_i))+\HM^d(E^{rec}\cap
			\cball(x_i,\rho_i'))\right)+\varepsilon(\diam O).\\
		\end{aligned}
	\]
	By the arbitrariness of $\delta$, we get that 
	\[
			\HM^d(E\cap \cup_{i=1}^m\oball_i)\leq c_0M(\diam O)
			\sum_{i=1}^m\HM^d(E^{rec}\cap \oball_i)+\varepsilon(\diam O).
	\]
	Since $\HM^d(E^{rec}\cap \oball_i)\leq \tau \omega_d \rho_i^d\leq 2^{d+1}
	\tau\HM^d(E^{irr}\cap \oball_i)$, we get that
		\[
			\sum_{i=1}^m\HM^d(E^{irr}\cap \oball_i)\leq 
			\HM^d(E\cap \cup_{i=1}^m\oball_i)\leq c_0M(\diam O) 
			\sum_{i=1}^m2^{d+1}\tau\HM^d(E^{irr}\cap
			\oball_i)+\varepsilon(\diam O),
	\]
	thus
	\[
		\left(1-2^{d+1}c_0M(\diam O)\tau\right)\sum_{i=1}^m\HM^d(E^{irr}\cap
		\oball_i) \leq \varepsilon(\diam O).
	\]
	By the arbitrariness of $\tau$, we get that 
	\[
		\HM^d(E^{irr}\cap O)\leq \varepsilon(\diam O).
	\]
\end{proof}
\begin{corollary}
	\label{co:rect}
	Suppose that $E\in \QM_d(\mathbb{R}^n,U,M,\varepsilon)$.
	If $\limsup_{r\to 0+}r^{-d}\varepsilon(r)<2^{-d}\omega_d$,
	then $E\cap U$ is $d$-rectifiable.
\end{corollary}
\begin{proof}
	Write $E\cap U=E^{rec}\cup E^{irr}$. Since $\limsup_{r\to
	0+}r^{-d}\varepsilon(r)<2^{-d}\omega_d$, by Lemma
	\ref{le:irr}, we get that $\Theta^{\ast d}(E^{irr},x)<2^{-d}$ for every
	point $x\in U$. Thus $\HM^d(E^{irr})=0$.
\end{proof}
\begin{lemma} \label{le:uAR}
	Suppose that $E\in \QM_d(\mathbb{R}^n,U,M,\varepsilon)$. Let $c_1=c_1(n,d)>0$ 
	be the constant in Lemma \ref{le:ffpa}, let $\eta>1$ and
	$r_0>0$ be such that $2(n-d)c_0 (\ln \eta)^{-1/2}M(\eta r_0)\leq 1$.
	Then for any $x\in E\cap U$ and $0<r<\min\{r_0,\eta^{-1}
	\dist(x,\mathbb{R}^n\setminus U)\}$, we have that
	\[
		\HM^d(E\cap \cball(x,r))\leq 2c_1M(2\eta r)(\eta r)^d + \varepsilon(2\eta r).
	\]
\end{lemma}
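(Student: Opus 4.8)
The plan is to combine the deformation of Lemma~\ref{le:ffpa} with the quasiminimality of $E$ to obtain a one–scale comparison inequality, and then to optimise it over a geometric ladder of $\asymp(\ln\eta)^{1/2}$ scales.

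Write $g(t)=\HM^d(E\cap\cball(x,t))$; since $E\cap U$ is relatively closed in $U$ and $\HM^d\mr E$ is locally finite, $g$ is a finite nondecreasing function on $\big(0,\dist(x,\mathbb{R}^n\setminus U)\big)$, and by hypothesis $\eta r<\dist(x,\mathbb{R}^n\setminus U)$. For radii $r\le s<\rho\le\eta r$ I would apply Lemma~\ref{le:ffpa} with centre $x$, inner radius $s$ and outer radius $\rho$ to get a Lipschitz $\phi\colon\mathbb{R}^n\to\mathbb{R}^n$ with $\phi(\oball(x,\rho))\subseteq\oball(x,\rho)$, $\phi=\id$ off $\oball(x,\rho)$, $\phi(E\cap\cball(x,s))\subseteq\cup\mathscr{F}_d'$, and $\HM^d(\phi(E\cap\oball(x,\rho)))\le c_0\HM^d(E\cap\ann(x,s,\rho))+c_1\rho^n(\rho-s)^{-(n-d)}$. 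As $\phi$ is Lipschitz, equals $\id$ off $\oball(x,\rho)$ and maps $\oball(x,\rho)$ into itself, the straight–line homotopy $(z,t)\mapsto(1-t)\phi(z)+tz$ shows $\phi$ is homotopic to $\id_{\mathbb{R}^n}$, and the closure of $\{\phi\neq\id\}$ lies in the compact set $\cball(x,\rho)\subseteq U$, so $\phi\in\mathcal{D}(\mathbb{R}^n,U)$. With $W_\phi=\{z:\phi(z)\neq z\}\subseteq\oball(x,\rho)$ one has $\diam(W_\phi\cap\phi(W_\phi))\le2\rho\le2\eta r$, so quasiminimality and the monotonicity of $M$ and $\varepsilon$ give
\[
\HM^d(E\cap W_\phi)\le M(2\eta r)\,\HM^d\big(\phi(E\cap\oball(x,\rho))\big)+\varepsilon(2\eta r);
\]
since moreover the $\phi$–fixed points of $E$ inside $\cball(x,r)$ lie in $\phi(E\cap\oball(x,\rho))$, splitting $E\cap\cball(x,r)$ into its part inside $W_\phi$ and its fixed part and using $M\ge1$ yields the key estimate, valid for all $r\le s<\rho\le\eta r$:
\begin{equation}\label{eq:plan-star}
g(r)\le 2M(2\eta r)\Big(c_0\,\HM^d\big(E\cap\ann(x,s,\rho)\big)+\frac{c_1\rho^n}{(\rho-s)^{n-d}}\Big)+\varepsilon(2\eta r).
\end{equation}

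Next I would use the freedom in $(s,\rho)$. Set $N=\lceil(\ln\eta)^{1/2}\rceil$, $\lambda=\eta^{1/N}$, and $s_j=r\lambda^{j-1}$, $\rho_j=r\lambda^{j}$ for $1\le j\le N$, so $\rho_N=\eta r$. The $N$ annuli $\ann(x,s_j,\rho_j)$ are pairwise disjoint and contained in $\cball(x,\eta r)$, whence $\sum_{j=1}^N\HM^d(E\cap\ann(x,s_j,\rho_j))\le g(\eta r)$ and some $j_0$ satisfies $\HM^d(E\cap\ann(x,s_{j_0},\rho_{j_0}))\le g(\eta r)/N$; moreover $\rho_{j_0}^n(\rho_{j_0}-s_{j_0})^{-(n-d)}=\rho_{j_0}^{\,d}\big(\lambda/(\lambda-1)\big)^{n-d}\le(\eta r)^d\big(\lambda/(\lambda-1)\big)^{n-d}$, and the hypothesis $2(n-d)c_0(\ln\eta)^{-1/2}M(\eta r_0)\le1$ forces $\ln\eta$, hence $\lambda=\eta^{1/N}$, large enough that $\big(\lambda/(\lambda-1)\big)^{n-d}\le2$. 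Feeding $(s_{j_0},\rho_{j_0})$ into \eqref{eq:plan-star} gives
\[
g(r)\le \frac{2c_0M(2\eta r)}{N}\,g(\eta r)+4c_1M(2\eta r)(\eta r)^d+\varepsilon(2\eta r),
\]
and, again by the hypothesis, the coefficient of $g(\eta r)$ is $2c_0M(2\eta r)/N\le 2c_0M(\eta r_0)/(\ln\eta)^{1/2}\le(n-d)^{-1}<1$.

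The remaining — and delicate — step is to eliminate the $g(\eta r)$ on the right: \eqref{eq:plan-star} only bounds the density at scale $r$ by the (a~priori unbounded) density at the larger scale $\eta r$. Since the displayed inequality of the previous paragraph holds verbatim at every base radius $t$ with $\eta t<\dist(x,\mathbb{R}^n\setminus U)$, I would iterate it along the chain $r,\eta r,\eta^2 r,\dots$ for as long as the chain stays below $\dist(x,\mathbb{R}^n\setminus U)$: the $g$–term is then multiplied by ever higher powers of the contraction factor $<1$ while $g$ stays finite along the chain, so it is absorbed; the geometric series formed by the $4c_1M(\cdot)(\eta\,\cdot)^d$ terms sums to at most $2c_1M(2\eta r)(\eta r)^d$, and the $\varepsilon$–terms to at most $\varepsilon(2\eta r)$, giving $\HM^d(E\cap\cball(x,r))\le 2c_1M(2\eta r)(\eta r)^d+\varepsilon(2\eta r)$. (Equivalently, one argues by contradiction at the first scale at which $g(t)/t^d$ would exceed the claimed value and derives a contradiction from the displayed inequality above.) I expect this closing argument to be the main obstacle; the role of the hypothesis on $\eta$ is exactly to make the contraction strict — one may use $\asymp(\ln\eta)^{1/2}$ annuli, and balancing their number against the per–annulus skeleton cost $\big(\lambda/(\lambda-1)\big)^{n-d}$ is what forces both the exponent $\tfrac12$ and the factor $n-d$ in $2(n-d)c_0(\ln\eta)^{-1/2}M(\eta r_0)\le1$.
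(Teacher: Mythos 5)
Your one--scale estimate is fine (it is essentially the same use of Lemma \ref{le:ffpa} plus quasiminimality that the paper makes, up to the harmless factor $2M$ from treating the fixed part of $E$ separately), but the closing step, which you yourself flag as the main obstacle, does not work, and it is precisely the step the hypothesis on $\eta$ is designed to handle differently. Your recursion bounds $g(t)=\HM^d(E\cap\cball(x,t))$ by $\theta\,g(\eta t)+4c_1M(2\eta t)(\eta t)^d+\varepsilon(2\eta t)$, i.e.\ it looks \emph{upward} in scale, and the chain $r,\eta r,\eta^2 r,\dots$ can be followed only boundedly many times: the standing assumption is only $r<\eta^{-1}\dist(x,\mathbb{R}^n\setminus U)$, so already $\eta^2r$ may lie outside $U$ (the chain may stop after a single step), and even when it does not, $M(2\eta^{j+1}r)$ is no longer controlled by $M(\eta r_0)$ once $\eta^j r>r_0$, so the contraction $\theta<1$ is not available at the later scales. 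Hence you are left with a term $\theta^{K}g(\eta^{K}r)$ with \emph{bounded} $K$ and no quantitative control on $g(\eta^{K}r)$; "finite along the chain" does not absorb it. The error series is also not summable as claimed: the $j$-th error is of size $\theta^{j}(\eta^{j+1}r)^d=(\eta r)^d(\theta\eta^d)^j$, and $\theta\eta^d\approx 2c_0M(\ln\eta)^{-1/2}e^{d\ln\eta}$ is huge, not $<1$; moreover even your first term $4c_1M(2\eta r)(\eta r)^d$ already exceeds the asserted bound $2c_1M(2\eta r)(\eta r)^d$. The contradiction variant ("first scale where $g(t)/t^d$ is too big") fails for the same reason: the inequality transfers information from $\eta t$ down to $t$, never the other way.

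The paper closes the argument by never leaving the ball $\cball(x,\eta r)$. It builds an \emph{infinite} increasing ladder $r_m=\alpha_m r$ with $\alpha_1=1$, $\alpha_{m+1}=(1-\gamma^m)^{-1}\alpha_m$ and $\gamma=\bigl(1-\tfrac{1}{c_0M(2\eta r)}\bigr)^{1/(2(n-d))}$; the hypothesis $2(n-d)c_0(\ln\eta)^{-1/2}M\le 1$ is used exactly to show $\sum_m\ln(1-\gamma^m)^{-1}<\ln\eta$, i.e.\ $\alpha_m<\eta$ for all $m$, so every comparison takes place inside $\cball(x,\eta r)\subseteq U$ where $g(\eta r)<\infty$. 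At each step one applies Lemma \ref{le:ffpa} between $r_m$ and $r_{m+1}$ and then performs a hole--filling rearrangement: from $g(r_{m+1})\le c_0M\bigl(g(r_{m+1})-g(r_m)\bigr)+\mathrm{err}_m+\varepsilon$ one gets $g(r_m)\le\frac{c_0M-1}{c_0M}g(r_{m+1})+\frac{\mathrm{err}_m+\varepsilon}{c_0M}$, a \emph{fixed} contraction factor per step. Iterating infinitely many times, the residual term carries the prefactor $\bigl(\frac{c_0M-1}{c_0M}\bigr)^m\to0$ against the single finite quantity $g(\eta r)$, and the per--step costs $c_1(\alpha_{m+1}-\alpha_m)^{d-n}\alpha_{m+1}^n r^d$ form a geometric series with ratio $\gamma^{n-d}$ summing to at most $2c_0M\cdot c_1c_0^{-1}\eta^dr^d$, which is where the constant $2c_1M(2\eta r)(\eta r)^d$ and the exponent $\tfrac12$ in the hypothesis actually come from. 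So the missing idea in your proposal is this: instead of disjoint annuli at one shot plus an upward iteration, one needs a hole--filling recursion whose radii all stay below $\eta r$.
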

\begin{proof}
	We take $\alpha_1=1$, and 
	\[
		\gamma=\left(1-\frac{1}{c_0 M(2 \eta
		r)}\right)^{\frac{1}{2(n-d)}},\ 
		\alpha_{m+1}=\Big( 1- \gamma^m \Big)^{-1} \alpha_m, \ m\geq 1.
	\]
	Since $(1+x)^{\beta}\leq 1+\beta x$ for $\beta\in [0,1]$ and $x\geq -1$, we
	have that 
	\[
		\gamma\leq \left( 1- 2(n-d)(\ln \eta)^{-1/2}\right)^{1/(2(n-d))}\leq
		1-(\ln \eta)^{-1/2}
	\]

	Since $x/(1+x)\leq \ln (1+x)\leq x$ for $-1<x<1$, we have that 
	\[
		\ln \alpha_{m+1}=-\sum_{k=1}^{m}\ln \Big( 1-\gamma^k \Big)\leq
		\sum_{k=1}^{m}\frac{\gamma^k}{1-\gamma^k}\leq
		\sum_{k=1}^{m}\frac{\gamma^k}{1-\gamma}\leq
		\frac{\gamma}{(1-\gamma)^2}< \ln \eta.
	\]
	Thus $\alpha_m< \eta$ for any $m\geq 1$. Put $r_1=r$ and $r_m=\alpha_m r$.
	By Lemma \ref{le:ffpa}, we can find Lipschitz mappings $\phi_m:\mathbb{R}^n\to
	\mathbb{R}^n$ such that $\phi_m(\cball(x,r_{m+1}))\subseteq
	\cball(x,r_{m+1})$, $\phi_m\vert_{\mathbb{R}^n\setminus \cball(x,r_{m+1})}=
	\id_{\mathbb{R}^n\setminus \cball(x,r_{m+1})}$ and 
	\[
		\HM^d(\phi_m(E\cap \oball(x,r_{m+1})))\leq c_0\HM^d(E\cap
		\ann(x,r_{m+1}, r_m))+ c_1 (\alpha_{m+1}-\alpha_{m})^{d-n}(\alpha_{m+1} )^n
		r^d.
	\]
	Thus, setting $\oball_m = \oball(x,r_m)$, 
	\[
		\begin{aligned}
			\HM^d(E\cap \oball_{m+1})&\leq M(2 \eta r)\HM^d(\phi_m(E\cap
			\oball_{m+1}))+\varepsilon(2 \eta r)\\
			&\leq c_0M(2\eta r) \HM^d(E\cap \ann(x,r_{m+1},r_m))+
			c_1M(2\eta r)(\alpha_{m+1}-\alpha_{m})^{d-n}\alpha_{m+1}^n r^d
			+\varepsilon(2\eta r),
		\end{aligned}
	\]
	and we get that 
	\[
		\HM^d(E\cap \cball(x,r_m))\leq \frac{c_0
		M(2\eta r)-1}{c_0M(2\eta r)}\HM^d(E\cap \oball_{m+1})+
		\frac{c_1M(2\eta r)(\alpha_{m+1}-\alpha_{m})^{d-n}\alpha_{m+1}^n r^d
		+\varepsilon(2\eta r)}{c_0M(2\eta r)},
	\]
	thus
	\[
		\HM^d(E\cap \cball(x,r))\leq \left(\frac{c_0 M(2\eta r)-1}{c_0M(2 \eta r)}
		\right)^{m}\HM^d(E\cap \oball_{m+1})+
		\sum\frac{c_1c_0^{-1}r^d \alpha_{k+1}^n}{(\alpha_{k+1}-\alpha_k)^{n-d}}
		\left(\frac{c_0M(2\eta r)-1}{c_0M(2\eta r)}\right)^k+ \varepsilon(2\eta r).
	\]
	Since $\HM^d(E\cap \cball(x,\eta r))<\infty$, we get that
	\[
		\begin{aligned}
			\HM^d(E\cap \cball(x,r))&\leq c_1c_0^{-1}
			\sum_{k=1}^{\infty}\frac{\alpha_{k+1}^n}{(\alpha_{k+1}-\alpha_k)^{n-d}}
			\left(\frac{c_0M(2\eta r)-1}{c_0M(2\eta r)}\right)^{k} r^d+
			\varepsilon(2\eta r)\\
			&\leq c_1c_0^{-1} \eta^d \sum_{k=1}^{\infty}(1-\alpha_k/
			\alpha_{k+1})^{d-n}\gamma^{2(n-d)k} r^d+\varepsilon(2 \eta r)\\
			&= c_1c_0^{-1} \eta^d\sum_{k=1}^{\infty}\gamma^{(n-d)k} r^d+
			\varepsilon(2\eta r)= c_1c_0^{-1} \eta^d
			\frac{\gamma^{n-d}}{1-\gamma^{n-d}}r^d+ \varepsilon(2\eta r)\\
			&\leq 2c_1\eta^dM(2\eta r)r^d+ \varepsilon(2\eta r).
		\end{aligned}
	\]
\end{proof}

\begin{lemma}
	\label{le:density}
	Suppose that $E_k\in \QM_d(\mathbb{R}^n,U,M_k,\varepsilon_k)$,
	$M(r)=\limsup_{k}M_k(r)$, $\varepsilon(r)=\limsup_{k}\varepsilon_k(r)$,
	$\HM^d \mr (E_k\cap U)\wc \mu$, $M(0+)<\infty$ and $\varepsilon_0=\limsup_{r\to
	0}r^{-d}\varepsilon(r)<\infty$. Then for any $x\in U\cap \spt \mu$, 
	\[
		\Theta^{\ast d}(\mu,x)\leq \left(2 c_1
		M(0+)\omega_d^{-1}+2^d \omega_d^{-1}\varepsilon_0\right)\exp
		\left(4d(n-d)^2c_0^2 M(0+)^2\right).
	\]
\end{lemma}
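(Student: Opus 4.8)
The plan is to derive an upper density bound for $\mu$ at a point $x\in U\cap\spt\mu$ by transferring, via weak convergence, the scale-by-scale almost-monotonicity estimate just proved for a single quasiminimal set. First I would fix $x\in U\cap\spt\mu$ and a radius $\rho$ with $0<\rho<\dist(x,\mathbb{R}^n\setminus U)$ such that $\mu(\partial\cball(x,\rho))=0$, so that $\HM^d(E_k\cap\oball(x,\rho))\to\mu(\oball(x,\rho))$ (using weak convergence together with the fact that $\oball(x,\rho)$ is open and $\cball(x,\rho)$ is closed, as recorded in the semicontinuity remark in Section~1). Choosing $\eta>1$ appropriately, the previous lemma applied to each $E_k$ gives, for all large $k$ and all small $r$,
\[
	\HM^d(E_k\cap\cball(x,r))\leq 2c_1 M_k(2\eta r)(\eta r)^d+\varepsilon_k(2\eta r),
\]
whence passing to the $\limsup$ in $k$ along a radius $\rho$ with $\mu(\partial\cball(x,\rho))=0$ yields
\[
	\mu(\cball(x,\rho))\leq 2c_1 M(2\eta\rho)(\eta\rho)^d+\varepsilon(2\eta\rho),
\]
at least for those $\rho$; a density point/continuity argument removes the restriction on $\rho$.

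Next I would optimize the parameter $\eta$. The hypothesis of the previous lemma requires $2(n-d)c_0(\ln\eta)^{-1/2}M(\eta r_0)\leq 1$, i.e. $\ln\eta\geq 4(n-d)^2 c_0^2 M(\eta r_0)^2$. Since we will ultimately let $\rho\to 0$, we may work with $r$ so small that $M(\eta r)$ is as close to $M(0+)$ as we like; so the effective constraint is $\ln\eta\gtrsim 4(n-d)^2 c_0^2 M(0+)^2$, and the smallest admissible choice contributes a factor $\eta^d=\exp(d\ln\eta)=\exp\big(4d(n-d)^2 c_0^2 M(0+)^2\big)$ to the bound. Dividing by $\omega_d\rho^d$ and sending $\rho\to 0^+$, the term $2c_1 M(2\eta\rho)(\eta\rho)^d/(\omega_d\rho^d)$ tends to $2c_1 M(0+)\omega_d^{-1}\eta^d$, while $\varepsilon(2\eta\rho)/(\omega_d\rho^d)=(2\eta)^d\omega_d^{-1}\cdot\varepsilon(2\eta\rho)/(2\eta\rho)^d$ tends to $(2\eta)^d\omega_d^{-1}\varepsilon_0$. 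Collecting the common factor $\eta^d$ gives exactly
\[
	\Theta^{\ast d}(\mu,x)\leq\big(2c_1 M(0+)\omega_d^{-1}+2^d\omega_d^{-1}\varepsilon_0\big)\exp\big(4d(n-d)^2 c_0^2 M(0+)^2\big).
\]

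I expect the main obstacle to be the interchange of the $\limsup$ over $k$ with the radius: the previous lemma gives a bound valid for $0<r<\min\{r_0,\eta^{-1}\dist(x,\mathbb{R}^n\setminus U)\}$ where $r_0$ depends on the requirement $2(n-d)c_0(\ln\eta)^{-1/2}M_k(\eta r_0)\leq 1$, and $M_k$ varies with $k$; one must pick $\eta$ and $r_0$ so that the inequality holds simultaneously for all large $k$, using $M(r)=\limsup_k M_k(r)$ and monotonicity of $M_k$ so that $M_k(\eta r_0)\leq M(\eta r_0)+1$ for $k$ large. A secondary technical point is that weak convergence only controls $\mu$ on balls whose boundary is $\mu$-null; since such radii are cofinal, and since $r\mapsto\mu(\cball(x,r))$ is monotone, one recovers the estimate for all sufficiently small $\rho$ by approximation from above, and then the $\limsup$ defining $\Theta^{\ast d}(\mu,x)$ is taken over exactly those radii. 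The rest — plugging in, letting $M_k(\eta r)\to M(0+)$ via continuity of the $\limsup$ at $0$ from the right together with monotonicity, and the elementary inequalities used to bound $\gamma$ and $\alpha_m$ in the previous lemma — is routine.
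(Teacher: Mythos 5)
Your proposal is correct and follows essentially the same route as the paper: apply the preceding almost-monotonicity lemma to each $E_k$ with $\eta$ just above the threshold $\exp\bigl(4(n-d)^2c_0^2M(0+)^2\bigr)$, pass to the limit via weak convergence, divide by $\omega_d r^d$, and let $r\to 0$. Two cosmetic remarks: the restriction to radii with $\mu$-null sphere is unnecessary, since only the inequality $\mu(\oball(x,r))\leq\liminf_k\HM^d(E_k\cap\oball(x,r))$ on open balls is needed; and the margin $M_k(\eta r_0)\leq M(\eta r_0)+1$ should be replaced by $M_k(\eta r_0)\leq M(\eta r_0)+\delta$ with $\delta$ small enough to preserve the strict inequality defining $r_0$, which does not affect the final constant since $\eta$ is then sent to the threshold.
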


\begin{proof}
	Take $\eta>\exp \left( 4(n-d)^2c_0^2 M(0+)\right)$. Then $2(n-d)c_0 (\ln
	\eta)^{-1/2}M(0+)< 1$, and there is a radius $r_0>0$ such that $2(n-d)c_0
	(\ln \eta)^{-1/2}M(\eta r_0)< 1$, thus $2(n-d)c_0 (\ln \eta)^{-1/2}
	M_k(\eta r_0)< 1$ for $k$ large enough. For any $x\in U\cap \spt \mu$ and 
	$0<r<\min\{r_0,\eta^{-1}\dist(x,\mathbb{R}^n\setminus U)\}$,  we have that 
	\[
		\HM^d(E_k\cap \cball(x,r))\leq 2c_1M_k(2\eta r)(\eta
		r)^d+\varepsilon_k(2\eta r),
	\]
	thus
	\begin{equation}\label{eq:ual}
		\mu(\oball(x,r))\leq \liminf_{k\to \infty}\HM^d(E_k\cap \oball(x,r))\leq
		2c_1 M(2\eta r)(\eta r)^d+ \varepsilon(2\eta r),
	\end{equation}
	and 
	\[
		\Theta^{\ast d}(\mu,x)\leq 2c_1 M(0+)\eta^d \omega_d^{-1}+ 2^d \eta^d
		\omega_d^{-1}\limsup_{r\to 0}r^{-d}\varepsilon(r)\leq
		\left(2c_1M(0+)\omega_d^{-1}+2^d \omega_d^{-1} \varepsilon_0\right)\eta^d.
	\]
\end{proof}
\begin{lemma}
	\label{le:bl}
	There exist constants $c_3=c_3(n,d) >0$ and $c_4=c_4(n)>0$ such that
	for any $M$, $\varepsilon$, $E\in \QM_d(\mathbb{R}^n,U, M,
	\varepsilon)$, open set $O\subseteq U$,
	$\delta>0$ and $0<\tau<\min\{(10\sqrt{n})^{-d},
	c_3^{-1}M(\diam O)^{-1}\}$, by setting
	\[
		m(O,x,\delta)=\inf\left\{r^{-d}\HM^d(E\cap
		\cball(x,r)):0<r<\min\{\dist(x,\mathbb{R}^n\setminus O),\delta\}\right\}
	\]
	and
	\[
		E(O,\delta,\tau)=\{x\in E\cap O :m(O,x,\delta)\leq \tau\},
	\]
	we have that  
	\[
		\HM^d(E(O,\delta,\tau))\leq \frac{c_4 \varepsilon(\diam O)}
		{1-c_3 M(\diam O )\tau}.
	\]
\end{lemma}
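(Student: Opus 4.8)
The idea is to cover $E(O,\delta,\tau)$ by balls on which $E$ is light, collapse $E$ inside them with the Federer--Fleming deformations furnished by Lemma~\ref{le:ffpa}, feed the outcome into the quasiminimality inequality, and rearrange to produce the factor $1/(1-c_3M(\diam O)\tau)$.

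First one reduces to the essential case. If $\diam O=+\infty$ the right--hand side is $\geq\varepsilon(\diam O)$ and there is nothing to prove, so assume $\diam O<\infty$; replacing $O$ by $O\cap\oball(0,R)$ and letting $R\to\infty$ at the end, one may also assume $\HM^d(E\cap O)<\infty$. By Lemma~\ref{le:irr}, $\HM^d(E^{irr}\cap O)\leq\varepsilon(\diam O)$, so the purely unrectifiable part can be pushed into the error term throughout. Next, for each $x\in E(O,\delta,\tau)$ choose a radius $\rho_x<\min\{\dist(x,\mathbb{R}^n\setminus O),\delta\}$ with $\HM^d(E\cap\cball(x,\rho_x))\leq 2\tau\rho_x^{\,d}$ and $\HM^d(E\cap\partial\cball(x,\rho_x))=0$; since the density estimate survives on $[\rho_x/2,\rho_x]$, an averaging (Fubini) argument over this interval also lets one arrange that the thin outer shell is light, $\HM^d\big(E\cap\ann(x,(1-s)\rho_x,\rho_x)\big)\leq C(n,d)\,s\,\tau\,\rho_x^{\,d}$, for a value $s=s(\tau)\in(0,\tfrac34)$ picked just large enough that $2\tau\rho_x^{\,d}<c_0^{-1}(3\sqrt n)^{d-n}(s\rho_x)^{d}$ (so $s\asymp\tau^{1/d}$); here the smallness $\tau<c_3^{-1}M(\diam O)^{-1}\le c_3^{-1}$ is what makes such an $s<3/4$ available.

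Now apply Lemma~\ref{le:ffpa} in $\oball(x,\rho_x)$ with inner radius $(1-s)\rho_x$: its hypothesis \eqref{eq:ffpa3} holds by the choice of $s$, so one obtains a Lipschitz map $\phi_x$, equal to the identity outside $\oball(x,\rho_x)$ and on a shell near $\partial\oball(x,\rho_x)$, which sends $E\cap\cball(x,(1-s)\rho_x)$ into a $(d-1)$--skeleton and satisfies $\HM^d(\phi_x(E\cap\oball(x,\rho_x)))\leq c_0\,\HM^d\big(E\cap\ann(x,(1-s)\rho_x,\rho_x)\big)\leq c_0C(n,d)\,s\,\tau\,\rho_x^{\,d}$. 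Apply the Besicovitch covering theorem to $\{\oball(x,\rho_x):x\in E(O,\delta,\tau)\}$ to extract a countable subfamily of bounded multiplicity that splits into $\xi_n$ pairwise disjoint subfamilies $\mathcal{B}_1,\dots,\mathcal{B}_{\xi_n}$; on each $\mathcal{B}_j$ the maps $\phi_{x_i}$ ($i\in I_j$) glue --- they are identities near the boundaries of their (disjoint) balls --- into a single deformation $\varphi_j\in\mathcal{D}(\mathbb{R}^n,O)$ whose moving set is compactly contained in $O$. Quasiminimality gives $\HM^d(E\cap W_{\varphi_j})\leq M(\diam O)\,\HM^d(\varphi_j(E\cap W_{\varphi_j}))+\varepsilon(\diam O)$; since $\varphi_j$ moves $\HM^d$--a.e.\ point of $E$ inside each $\cball(x_i,(1-s)\rho_i)$, the left side dominates $\sum_{i\in I_j}\HM^d(E\cap\cball(x_i,(1-s)\rho_i))$, while by disjointness the right side is at most $M(\diam O)\,c_0\sum_{i\in I_j}\HM^d\big(E\cap\ann(x_i,(1-s)\rho_i,\rho_i)\big)+\varepsilon(\diam O)$.

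Summing over $j$, using the covering relation together with the shell bound and the bounded multiplicity, one is led to an estimate of the form
\[
\HM^d\big(E(O,\delta,\tau)\big)\ \leq\ c_3\,M(\diam O)\,\tau\ \HM^d\big(E(O,\delta,\tau)\big)\ +\ c_4\,\varepsilon(\diam O),
\]
with $c_3,c_4$ depending only on $n$ and $d$; since $c_3M(\diam O)\tau<1$, rearranging gives the assertion. The main obstacle is precisely this last step: one must compare $\sum_i\rho_i^{\,d}$ --- equivalently, the total amount of $\HM^d$--mass removed by the $\phi_{x_i}$ --- against $\HM^d(E(O,\delta,\tau))$ itself. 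This is what forces the averaging choice of $\rho_x$ above, so that in each ball the mass destroyed in $\cball(x_i,(1-s)\rho_i)$ controls the mass re-created in the outer shell, together with a careful use of the bounded--multiplicity covering; obtaining the clean power of $\tau$ in the denominator (rather than a fractional power) is the delicate point, and if necessary can be forced by iterating the shell estimate at a geometric sequence of scales or by invoking the upper density bound established just before Lemma~\ref{le:density}.
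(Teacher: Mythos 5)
Your overall architecture — cover the low‑density set by balls, collapse $E$ inside each ball with Lemma~\ref{le:ffpa}, split the cover into boundedly many disjoint Besicovitch subfamilies, apply quasiminimality once per subfamily, then absorb — is the same as the paper's. But the step you yourself call ``the main obstacle'' is exactly where the paper's proof has its one essential idea, and your substitute does not close. With your radius choice ($\HM^d(E\cap\cball(x,\rho_x))\le 2\tau\rho_x^d$) and your pigeonholed shell bound $\HM^d\big(E\cap\ann(x,(1-s)\rho_x,\rho_x)\big)\le C(n,d)\,s\,\tau\,\rho_x^d$, the right-hand side after quasiminimality is of order $M(\diam O)\,c_0\,s\,\tau\sum_i\rho_i^d+\varepsilon(\diam O)$, and $\sum_i\rho_i^d$ is comparable neither to $\HM^d(E(O,\delta,\tau))$ nor to $\HM^d(E\cap\bigcup_i\cball_i)$: the points of $E(O,\delta,\tau)$ are precisely those whose density ratio $m_x$ may be far smaller than $\tau$, so $\tau\rho_i^d$ can vastly exceed the mass of $E$ in $\cball_i$ and no absorption is possible. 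Neither of your proposed rescues helps: the bound proved just before Lemma~\ref{le:density} is an \emph{upper} bound on ball mass (the wrong direction), and iterating shells at geometric scales does not create the missing lower bound on $\HM^d(E\cap\cball_i)$ in terms of $\rho_i^d$.

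The paper's device is to choose $r_x$ nearly attaining the infimum: $\HM^d(E\cap\cball(x,r_x))<(1+\delta_x)\,m_x r_x^d$ with $\delta_x=(6c_0\sqrt n\,m_x)^{1/d}$. Since $m_x$ is an infimum over \emph{all} admissible radii, the inner ball $\cball(x,(1-\delta_x)r_x)$ carries mass at least $m_x(1-\delta_x)^dr_x^d$, so the annulus $\ann(x,(1-\delta_x)r_x,r_x)$ carries at most about $(d+1)\delta_x m_x r_x^d$, i.e.\ a small multiple of $m_xr_x^d\le\HM^d(E\cap\cball(x,r_x))$ — a small multiple of the mass of $E$ itself in the ball (and this same choice of $\delta_x$ is what verifies hypothesis \eqref{eq:ffpa3} so that \eqref{eq:ffpa4} applies). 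Summing over the disjoint balls of one Besicovitch family, the error produced by the deformation is then a small multiple of $\HM^d(E\cap\bigcup_j\cball_{i,j})$, and quasiminimality yields a self‑improving inequality that rearranges to the stated bound. If you insist on your pigeonhole, the correct patch is to bound the chosen shell by a fraction of the ball's mass, $\lesssim s\,\HM^d(E\cap\cball(x,\rho_x))$, rather than by $s\tau\rho_x^d$; then the loop closes, but since \eqref{eq:ffpa3} forces $s\asymp\tau^{1/d}$ you obtain a denominator of the form $1-cM(\diam O)\tau^{1/d}$ rather than the stated one — enough for the later applications, but not the lemma as written, and still a different mechanism from the paper's near‑infimum choice. (Two smaller points: your pigeonhole produces a light shell at some radius in $[\rho_x/2,\rho_x]$, not the outermost one, so Lemma~\ref{le:ffpa} must be run in the ball of that outer radius; and the claim that $\varphi_j$ moves $\HM^d$-a.e.\ point of $E$ in the inner balls is not literally true — Federer--Fleming maps fix the $d$-skeleton — though the usual remedy of adding the unmoved mass to both sides, using $M\ge1$, repairs this.)
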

\begin{proof}
	For any $x\in E(O,\delta,\tau)$, we denote $m_x=m(O,x,\delta)$ and choose 
	radius $r_x>0$ such that
	$r_x<\min\{\dist(x,\mathbb{R}^n\setminus O),\delta\}$,
	$\HM^d(E\cap \partial \cball(x,r_x))=0$ and 
	\[
		\frac{\HM^d(E\cap \cball(x,r_x))}{r_x^d}<\left(1+\left(6c_0\sqrt{n}\cdot
		m_x\right)^{1/d}\right) m_x.
	\]
	Applying Besicovitch's covering theorem to $\{\cball(x,r_x)\}$, we can
	find constant $c_4=c_4(n)$ of positive integers and balls 
	$\{\cball_{i,j}\}_{j\in J_i}$, $1\leq i\leq c_4$, such that
	$\cball_{i,j_1}\cap\cball_{i,j_2}=\emptyset$ for any $j_1\neq j_2$, and
	\[
		E(O,\delta,\tau)\subseteq \bigcup_{i=1}^{c_4}\bigcup_{j\in J_i} \cball_{i,j}.
	\]
	By Lemma \ref{le:ffpa}, we can find Lipschitz mappings
	$\varphi_i:\mathbb{R}^n\to \mathbb{R}^n$ such that
	$\varphi_i(\cball_{i,j})\subseteq \cball_{i,j}$ for $j\in j_i$, 
	$\varphi_i(y)=y$ for $y\notin \cup_{j\in J_i}\cball_{i,j}$, and 
	\[
		\HM^d(E\cap \cup_{j\in J_i}\cball_{i,j})\leq c_0M(\diam O)\HM^d(E\cap
		\cup_{j\in J_i}\ann_{i,j}) +\varepsilon(\diam O),
	\]
	where $\ann_{i,j}=\ann(x_{i,j},(1-\delta_{i,j})r_{i,j},r_{i,j})$ and
	$\delta_{i,j}=(6c_0\sqrt{n}\cdot m_{i,j})^{1/d}$. Since $m_{i,j}<\tau$ and
	\[
		\HM^d(E\cap \ann_{i,j})\leq \left(1+\left(6c_0\sqrt{n}\cdot
		m_x\right)^{1/d}\right)
		m_{i,j}r_{i,j}^d-m_{i,j}(1-\delta_{i,j})^dr_{i,j}^{d}\leq
		(d+1)\delta_{i,j}m_{i,j}r_{i,j}^{d},
	\]
	we get that 
	\[
		\begin{aligned}
			\HM^d(E\cap \cup_{j\in J_i}\cball_{i,j})&\leq c_0M(\diam
			O)(d+1)(6c_0\sqrt{n})^{1/d}\tau\sum_{j\in J_i}m_{i,j}r_{i,j}^d+
			\varepsilon(\diam O)\\
			&\leq c_0M(\diam
			O)(d+1)(6c_0\sqrt{n})^{1/d}\tau \HM^d(E\cap \cup_{j\in
			J_i}\cball_{i,j})+\varepsilon(\diam O).
		\end{aligned}
	\]
	We take $c_3=(d+1)\sqrt{n}^{d/2}c_0^{(d+1)/d}$. Then
	\[
		\HM^d(E\cap \cup_{j\in J_i}\cball_{i,j})\leq \frac{\varepsilon(\diam
		O)}{1-c_3M(\diam O)\tau}.
	\]
	Hence 
	\[
		\HM^d(E(O,\delta,\tau))\leq \sum_{i=1}^{c_4}\HM^d(E\cap \cup_{j\in
		J_i}\cball_{i,j})\leq \frac{c_4\varepsilon(\diam
		O)}{1-c_3M(\diam O)\tau}.
	\]
\end{proof}

\begin{lemma}
	\label{le:lden}
	Suppose that $E_k\in \QM_d(\mathbb{R}^n,U,M_k,\varepsilon_k)$, $M(r)=\limsup_{k}M_k(r)$, $\varepsilon(r)=\limsup_k
	\varepsilon_k(r)$, $\HM^d\mr (E_k\cap U) \wc \mu$, $M(0+)<\infty$ and $\varepsilon_0=
	\limsup_{r\to 0}r^{-d}\varepsilon(r)<\infty$. Then for any $x\in U\cap \spt
	\mu$, if $\Theta^{\ast d}(\mu,x)>6^{d+1}c_4 \omega_d^{-1}\varepsilon_0$, then
	$\Theta_{\ast}^d(\mu,x)\geq (2 \omega_d c_0M(0+))^{-1}$. In particular, if
	$\varepsilon_0=0$, then $\Theta_{\ast}^d(\mu,x)\geq
	(2 \omega_d c_0M(0+))^{-1}$ for $\HM^d$-a.e. $x\in U\cap \spt \mu$. 
\end{lemma}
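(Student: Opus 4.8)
The plan is to argue by contradiction: suppose $x\in U\cap\spt\mu$ has $\Theta^{\ast d}(\mu,x)>6^{d+1}c_4\omega_d^{-1}\varepsilon_0$ but $\Theta_{\ast}^d(\mu,x)<(2\omega_dc_0M(0+))^{-1}$. I would then choose a small ball $\cball(x,\rho)\subseteq U$ on which $M(\diam)$ and $\varepsilon(\diam)/\diam^d$ are close to $M(0+)$ and $\varepsilon_0$ respectively, and on which there is a radius $s$ realizing a density close to the lower density; the idea is to play the lower-density scale $s$ against the upper-density scale to produce a competitor that deletes too much mass. Concretely, I expect to use the weak convergence $\HM^d\mr(E_k\cap U)\wc\mu$ to transfer a good density estimate from $\mu$ back to the approximating sets $E_k$ for large $k$, then apply Lemma \ref{le:bl} to the open set $O=\oball(x,\rho)$ with a suitably chosen threshold $\tau$.

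The key steps, in order: (i) fix $O=\oball(x,\rho)$ with $\rho$ small; since $\Theta_{\ast}^d(\mu,x)$ is small, for a sequence of radii $r\to 0$ one has $\mu(\cball(x,r))\leq\tau r^d$ for a $\tau$ just above $\Theta_{\ast}^d(\mu,x)$, and by choosing $\rho$ small one can arrange $\tau<\min\{(10\sqrt n)^{-d},c_3^{-1}M(\diam O)^{-1}\}$, which is where the lower bound $(2\omega_dc_0M(0+))^{-1}$ will enter: if $\Theta_{\ast}^d(\mu,x)<(2\omega_dc_0M(0+))^{-1}$ then indeed $c_3M(\diam O)\tau$ stays bounded away from $1$. (ii) Using weak convergence and the fact that $\HM^d\mr\partial\cball(x,r)=0$ for a.e.\ $r$, deduce that for each such good radius $r$ and $k$ large, $\HM^d(E_k\cap\cball(x,r))\leq\tau' r^d$ with $\tau'$ slightly larger than $\tau$; hence $x$ (or rather a nearby point carrying $\mu$-mass) lies in $E_k(O,\delta,\tau')$ for $\delta$ small. (iii) Apply Lemma \ref{le:bl} to $E_k$ to bound $\HM^d(E_k(O,\delta,\tau'))\leq c_4\varepsilon_k(\diam O)/(1-c_3M_k(\diam O)\tau')$. (iv) On the other hand, since $x\in\spt\mu$ and $\Theta^{\ast d}(\mu,x)$ is large, there is a radius $t\leq\delta$ with $\mu(\cball(x,t))\geq\frac12\Theta^{\ast d}(\mu,x)\omega_dt^d$, so for $k$ large $\HM^d(E_k\cap\cball(x,t))$ is comparably large, forcing a definite portion of $E_k\cap O$ to lie in $E_k(O,\delta,\tau')$ — but this contradicts the upper bound from step (iii) once $\rho$ is small enough that $\varepsilon_k(\diam O)\leq 2\varepsilon_0(\diam O)^d$ is tiny compared with $\Theta^{\ast d}(\mu,x)\,t^d$, because the numerical constants $6^{d+1}c_4$ were arranged precisely so that $6^{d+1}c_4\varepsilon_0<\Theta^{\ast d}(\mu,x)$ beats the Besicovitch multiplicity loss.

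The delicate point — the step I expect to be the main obstacle — is (iii)/(iv): matching the scale $t$ at which the upper density is large with the covering of $E_k(O,\delta,\tau')$ by balls of radius up to $\delta$, so that the mass $\HM^d(E_k\cap\cball(x,t))$ is genuinely accounted for inside $E_k(O,\delta,\tau')$ rather than leaking out. One has to be careful that the points of $E_k$ inside $\cball(x,t)$ really have small lower density at scale $\delta$ (so that they belong to $E(O,\delta,\tau')$), which requires $\delta$ to be chosen \emph{after} $t$, and then the good radii $r$ for the weak-convergence transfer in step (ii) must be available below $\delta$. Keeping the quantifier order $\rho\gg\delta\gg$ (good radii) consistent, and making sure the constants $c_3,c_4$ from Lemma \ref{le:bl} combine with the $\eta^d$-type loss in the upper-density estimate to give exactly the claimed threshold $6^{d+1}c_4\omega_d^{-1}\varepsilon_0$ and lower bound $(2\omega_dc_0M(0+))^{-1}$, is the bookkeeping that makes the argument work. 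The final sentence (the $\varepsilon_0=0$ case) is immediate: when $\varepsilon_0=0$ the hypothesis $\Theta^{\ast d}(\mu,x)>6^{d+1}c_4\omega_d^{-1}\varepsilon_0=0$ holds at $\mu$-a.e.\ point of $U\cap\spt\mu$ since $\mu$-a.e.\ point has positive upper density, and $\HM^d$-a.e.\ point of $\spt\mu$ is such a point because $\mu\leq\alpha\HM^d$-type comparisons (together with the rectifiable/unrectifiable split handled above) give $\mu$ and $\HM^d\mr\spt\mu$ the same null sets on the relevant portion.
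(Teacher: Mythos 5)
Your strategy inverts the paper's argument: you argue by contradiction and try to use the \emph{smallness} of $\Theta_{\ast}^d(\mu,x)$ to push the large mass coming from $\Theta^{\ast d}(\mu,x)$ into the bad set $E_k(O,\delta,\tau')$ of Lemma \ref{le:bl}. The gap is exactly at the point you flag as ``the main obstacle,'' and it is not mere bookkeeping: it cannot be closed in the form you propose. To certify that the points of $E_k\cap\cball(x,t)$ belong to $E_k(O,\delta,\tau')$ you must use a low-density radius $r$ of $\mu$ at $x$ with $r>t$ (so that $\cball(y,r-t)\subseteq\cball(x,r)$ for $y\in\cball(x,t)$), and the admissibility condition in Lemma \ref{le:bl} forces $\cball(y,r-t)\subseteq O$, hence $\diam O\gtrsim 2r$. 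Your contradiction then needs $\tfrac12\Theta^{\ast d}(\mu,x)\,\omega_d t^d$ to exceed $c_4\varepsilon(\diam O)/(1-c_3M\tau')\approx c_4\varepsilon_0(\diam O)^d\gtrsim c_4\varepsilon_0(2r)^d$, i.e.\ it needs $t$ and $r$ to be within a fixed ratio of each other. But the hypotheses give no interlacing of low- and high-density radii at bounded ratios: by monotonicity of $s\mapsto\mu(\cball(x,s))$, if $r$ is a radius where $\mu(\cball(x,r))\leq\tau r^d$, then every $t\in[r/C,r)$ satisfies $\mu(\cball(x,t))\leq C^d\tau t^d$, so when $\Theta^{\ast d}(\mu,x)\gg\Theta_{\ast}^d(\mu,x)$ there is simply no high-density radius $t$ within a bounded factor below $r$, and conversely the smallest low-density radius above a given high-density radius can be arbitrarily larger. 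Consequently the forced mass $\sim\Theta^{\ast d}(\mu,x)t^d$ can be far smaller than the bad-set bound $\sim c_4\varepsilon(\diam O)$ whenever $\varepsilon(\diam O)>0$, and no contradiction results; your scheme only works in the degenerate case $\varepsilon\equiv 0$. (A smaller point: membership in $E_k(O,\delta,\tau')$ requires a point \emph{of} $E_k$, so ``$x$ or a nearby point carrying $\mu$-mass'' must be replaced by nearby points of $E_k$, which exist by weak convergence but should be said.)

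The paper's proof avoids the scale-matching problem entirely by running a direct argument at the scales where the \emph{upper} density is nearly attained. One takes $r_m\to 0$ with $\mu(\oball(x,r_m))\approx\Theta^{\ast d}(\mu,x)\omega_d r_m^d$ and applies Lemma \ref{le:bl} with $O=\oball(x,3r_m)$, so the bad set $E_k(U,2r_m,\tau)\cap\oball(x,r_m)$ has measure at most about $2c_4\varepsilon(6r_m)\approx 2c_4 6^d\varepsilon_0 r_m^d$; the threshold $\Theta^{\ast d}(\mu,x)>6^{d+1}c_4\omega_d^{-1}\varepsilon_0$ is exactly what guarantees this is strictly less than the available mass, so for large $m,k$ there exist points $y_{m,k}\in E_k\cap\oball(x,r_m)$ \emph{outside} the bad set, i.e.\ with $\HM^d(E_k\cap\cball(y_{m,k},s))\geq\tau s^d$ for \emph{every} scale $s$ below a fixed radius $r_0'$, where $\tau<1/(2c_0M(0+))$. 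Since $|y_{m,k}-x|<r_m\to 0$, these uniform-in-scale bounds pass to the limit and give $\mu(\cball(x,s))\geq\tau s^d$ for all $s<r_0'$, hence $\Theta_{\ast}^d(\mu,x)\geq\omega_d^{-1}\tau$, and letting $\tau\uparrow(2c_0M(0+))^{-1}$ yields the claimed bound. In short: the good points supplied by Lemma \ref{le:bl} carry density lower bounds at \emph{all} small scales, so no comparison between the low- and high-density radii of $\mu$ at $x$ is ever needed; your contradiction route, which does need such a comparison, should be replaced by this direct argument.
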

\begin{proof}
	We take a sequence $\{r_m\}$ of decreasing positive numbers such that $r_m\to 0$ and
	\[
		\lim_{m\to \infty}\frac{\mu(\oball(x,r_m))}{\omega_dr_m^{d}}=\Theta^{\ast
		d}(\mu,x)>2^{d+1}c_4 \varepsilon_0,
	\]
	thus
	\[
		\liminf_{m\to \infty}\liminf_{k\to \infty}\frac{\HM^d(E_k\cap
		\oball(x,r_m))}{\omega_dr_m^{d}}\geq \Theta^{\ast
		d}(\mu,x)>2^{d+1}c_4 \varepsilon_0.
	\]
	For any $0<\tau<1/(2c_0M(0+))$, we take $0<r_0<\frac{1}{2}
	\dist(x,\mathbb{R}^n\setminus U)$ such that $2c_0M(2r_0)<1$.
	Since $E_k(U,2r_m,\tau)\cap \oball(x,r_m)=E_k(\oball(x,3r_m),2r_m,\tau)\cap
	\oball(x,r_m)\subseteq E_k(\oball(x,3r_m),r_m,\tau)$ when $m$ large enough
	such that $r_m<r_0/2$, by Lemma \ref{le:bl}, we get that 
	\[
		\limsup_{k\to \infty}\HM^d(E_k(U,2r_m,\tau)\cap \oball(x,r_m))\leq
		2c_4\varepsilon(6r_m)
	\]
	and 
	\[
		\limsup_{m\to \infty}\limsup_{k\to \infty}\frac{1}{r_m^d}
		\HM^d\big(E_k(U,2r_m,\tau)\cap \oball(x,r_m)\big)\leq 2c_46^d \varepsilon_0.
	\]
	We get that 
	\[
		\liminf_{m\to \infty}\liminf_{k\to \infty}\frac{\HM^d(E_k\cap
		\oball(x,r_m)\setminus E_k(U,2r_m,\tau))}{r_m^{d}}\geq \omega_d
		\Theta^{\ast d}(\mu,x)-2c_46^d \varepsilon_0>0.
	\]
	Since $r_m>r_{m+1}$, we get that $E_k(U,r_m,\tau)\subseteq
	E(U,r_{m+1},\tau)$. Taking $m_0>0$ such that $r_0'=r_{m_0}<r_0/2$,
	we have that for any $m\geq m_0$, there is an integer $k_{m}>0$ such that 
	$E_k\cap \oball(x,r_m)\setminus E_k(U,r_0',\tau)\neq \emptyset$ for 
	$k\geq k_m$. We take $y_{m,k}\in E_k\cap \oball(x,r_m)\setminus E_k
	(U,r_0',\tau)$, then for any $0<r<r_0'$,
	\[
		\HM^d(E_k\cap \cball(x,r))\geq \HM^d(E_k\cap
		\cball(y_{m,k},r-|x-y_{m,k}|)) \geq \HM^d(E_k\cap
		\cball(y_{m,k},r-r_m))\geq \tau (r-r_m)^d.
	\]
	Hence 
	\[
		\mu(\cball(x,r))\geq \limsup_{k\to \infty}\HM^d(E_k\cap \cball(x,r))\geq \tau
		(r-r_m)^d.
	\]
	Since $r_m\to 0$, we get that $\mu(\cball(x,r))\geq \tau r^d$, and
	$\Theta_{\ast}^d(\mu,x)\geq \omega_d^{-1}\tau$.
\end{proof}

\begin{proposition}
	\label{prop:limitradon}
	Suppose that $E_k\in \QM_d(\mathbb{R}^n,U,M_k,\varepsilon_k)$, $M(r)=\limsup_{k}M_k(r)$,
	$\varepsilon(r)=\limsup_k\varepsilon_k(r)$ and $\HM^d\mr (E_k\cap U) \wc \mu$. 
	If $M(0+)<\infty$ and $\lim_{r\to 0}r^{-d}\varepsilon(r)=0$, then $\mu \mr U$ is $d$-rectifiable. 
\end{proposition}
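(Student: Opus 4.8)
The plan is to show that $\mu \mr U$ is a $d$-rectifiable measure by verifying the hypotheses of a standard rectifiability criterion (Preiss's theorem, or more elementarily the criterion that a Radon measure with positive finite lower and upper $d$-densities $\HM^d$-a.e. and appropriate structure is rectifiable). The key inputs are already assembled in the preceding lemmas: an upper density bound, a lower density bound, and the $\HM^d$-measurability/finiteness of the relevant carrier.

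First I would record the density estimates under the hypothesis $\varepsilon_0 = \lim_{r\to 0} r^{-d}\varepsilon(r) = 0$. By Lemma \ref{le:density}, every $x \in U \cap \spt\mu$ satisfies $\Theta^{\ast d}(\mu,x) \leq 2c_1 M(0+)\omega_d^{-1}\exp(4d(n-d)^2c_0^2M(0+)^2) =: C < \infty$. By Lemma \ref{le:lden} with $\varepsilon_0 = 0$, we get $\Theta_{\ast}^d(\mu,x) \geq (2\omega_d c_0 M(0+))^{-1} > 0$ for $\HM^d$-a.e. $x \in U \cap \spt\mu$, hence (since $\mu$ is carried by $\spt\mu$ and the exceptional set is $\HM^d$-null, so $\mu$-null once we know $\mu \ll \HM^d$) for $\mu$-a.e. $x$. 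Next I would use the upper bound $\Theta^{\ast d}(\mu,x) \leq C$ together with the standard density comparison theorem for Hausdorff measures: a set $A = \{x : \Theta^{\ast d}(\mu,x) \leq C\}$ satisfies $\mu \mr A \leq 2^d C\, \HM^d \mr A$, which gives $\mu \mr U \ll \HM^d$ and, combined with finiteness of $\mu$ on compact subsets of $U$, that $\mu \mr U$ is a Radon measure absolutely continuous with respect to $\HM^d$ restricted to $\spt\mu \cap U$, a set of $\sigma$-finite $\HM^d$-measure.

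Then I would invoke the rectifiability criterion: a Radon measure on $\real^n$ with $0 < \Theta_{\ast}^d(\mu,x) \leq \Theta^{\ast d}(\mu,x) < \infty$ for $\mu$-a.e. $x$ is $d$-rectifiable (this is the measure-theoretic form of Preiss's theorem; alternatively, since here we additionally have the quasiminimality structure, one can decompose $U \cap \spt\mu$ into the part where $\mu$ behaves like $\HM^d$ on a rectifiable set and control the purely unrectifiable part directly). Concretely, I would write $\mu \mr U = \mu \mr R + \mu \mr P$ where $R$ is $d$-rectifiable and $P$ is purely $d$-unrectifiable, $\HM^d$-measurable; the positive lower density forces $\HM^d \mr P$ to be comparable to $\mu \mr P$ from below, while Lemma \ref{le:irr} applied to the competitors $E_k$ (whose purely unrectifiable parts have $\HM^d(E_k^{irr} \cap O) \leq \varepsilon_k(\diam O)$) forces, after passing to the weak limit via lower semicontinuity on open sets, $\mu(P \cap O) \leq \liminf_k \HM^d(E_k^{irr} \cap O) \leq \varepsilon(\diam O)$; rescaling at a point of positive lower density where $P$ has positive $\mu$-measure and using $\varepsilon_0 = 0$ yields a contradiction unless $\mu(P) = 0$.

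The main obstacle I expect is the bookkeeping in the last step: passing from the pointwise density bounds for $\mu$ to the measure-theoretic statement that $\mu$ is carried by a rectifiable set requires either citing Preiss's theorem (clean but heavy) or, if one wants a self-contained argument, carefully separating the rectifiable and unrectifiable parts of $\mu$ and showing the unrectifiable part is null. The delicate point in the latter approach is that $\mu$ is only known a priori to be a weak limit of $\HM^d \mr E_k$, so identifying $\mu \mr P$ with something controlled by $\varepsilon_k$ needs the lower-semicontinuity on open sets stated in the introduction together with a Vitali-type covering of $P$ by small balls on which $E_k$ is mostly unrectifiable — essentially re-running the mechanism of Lemma \ref{le:irr} and Lemma \ref{le:lden} at the level of the limit measure. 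Once $\varepsilon_0 = 0$ is used, every error term of the form $\varepsilon(cr)r^{-d}$ vanishes as $r \to 0$, so the unrectifiable contribution is squeezed to zero.
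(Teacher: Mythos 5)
There is a genuine gap, and it occurs precisely at the step your whole plan hinges on. The ``rectifiability criterion'' you invoke --- that a Radon measure with $0<\Theta_{\ast}^d(\mu,x)\leq \Theta^{\ast d}(\mu,x)<\infty$ for $\mu$-a.e.\ $x$ is $d$-rectifiable --- is false, and it is not the measure-theoretic form of Preiss's theorem. Preiss's theorem requires the \emph{existence} of the limit $\Theta^d(\mu,x)\in(0,\infty)$ a.e.; with only upper and lower density bounds the measure can be purely unrectifiable (take $\HM^1$ restricted to the four-corner Cantor set in $\mathbb{R}^2$: its density ratios are bounded above and below by positive constants, yet it is purely $1$-unrectifiable). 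Lemmas \ref{le:density} and \ref{le:lden} give exactly such two-sided bounds and nothing more, so they cannot by themselves yield rectifiability; this is why the paper still has real work to do after establishing $c_2^{-1}\leq\Theta_{\ast}^d\leq\Theta^{\ast d}\leq c_2$.

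Your fallback argument does not close the gap either. The inequality $\mu(P\cap O)\leq \liminf_k \HM^d(E_k^{irr}\cap O)$ has no justification: lower semicontinuity on open sets controls $\mu(O)$ by $\liminf_k\HM^d(E_k\cap O)$, not by the unrectifiable parts alone, and weak convergence does not respect the rectifiable/unrectifiable decomposition --- indeed by Lemma \ref{le:irr} one already has $\HM^d\mr E_k^{irr}\wc 0$, so if your inequality were true the proposition would be immediate; the whole danger is that the \emph{rectifiable} pieces $E_k^{rec}$ can accumulate on a purely unrectifiable limit set (finer and finer rectifiable grids do exactly this for general sequences). Also note you cannot instead apply Lemma \ref{le:irr} to the limit set $E=U\cap\spt\mu$, since its quasiminimality is only proved later (Theorem \ref{thm:almostmin}) and that proof already uses rectifiability of $E$, so this would be circular. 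The paper's mechanism is different and is the missing idea: at a point $x$ of the purely unrectifiable part of $\spt\mu$ the rectifiable part of $\spt\mu$ has small density, so the smallness condition \eqref{eq:ffp3} of the deformation theorem holds on a fine cubical grid around $x$, and \eqref{eq:ffp4} lets one push the open neighborhood $W\supseteq\spt\mu$ --- hence $E_k$ for large $k$, up to errors controlled by $\HM^d(E_k\cap U\setminus W)\to 0$ --- into the $(d-1)$-skeleton inside $\cball(x,\rho)$; quasiminimality of the $E_k$ then forces $\mu(\oball(x,(1+3\sqrt{n}\delta)\rho))\lesssim\delta\rho^d$, contradicting the lower density bound. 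Without this deformation-plus-quasiminimality step (or some substitute argument of comparable strength), your proof does not go through.
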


\begin{proof}
	Write $E_k\cap U=E_{k}^{rec}\sqcup E_{k}^{irr}$, $E_k^{rec}$ is
	$d$-rectifiable and $\HM^d$-measurable, $E_k^{irr}$ is purely
	$d$-unrectifiable and $\HM^d$-measurable. Put $\mu_k=\HM^d\mr E_{k}^{rec}$. 
	By Lemma \ref{le:irr}, we have that $\HM^d\mr E_k^{irr}\wc 0$ and $\mu_k\wc
	\mu$. Put $E=U\cap \spt \mu$. By Lemma \ref{le:density} and Lemma
	\ref{le:lden}, there is a constant $c_2>0$ which only depends on $n,d,M(0+)$ 
	such that $1/c_2\leq\Theta_{\ast}^d(\mu,x)\leq \Theta^{\ast d}(\mu,x)\leq 
	c_2$ for $\HM^d$-a.e. $x\in E$, thus there is a Borel function $\theta:E\to
	\mathbb{R}$ such that $\mu=\theta \HM^d\mr E$. We assume by contradiction 
	that $E\cap U$ is not $d$-rectifiable, and $E\cap U=E^{rec}\sqcup E^{irr}$. 
	Then there exists $x\in E^{irr}$ such that 
	\[
		\Theta^d(E^{rec},x)=0,\ 2^{-d}<\Theta^{\ast d}(E^{irr},x)\leq 1,
		\text{ and }1/c_2\leq \Theta^d(\mu,x)\leq c_2.
	\]
	For any $0<\delta<\min\{1/10,(c_0 (c_2)^2 2^{d+4}\sqrt{n} M(0+))^{-1}\}$ and $0<\tau<(10\sqrt{n})^{d-n}\delta^d$, 
	we take radius $\rho>0$ such that
	$\rho<\frac{1}{2}\dist(x,\mathbb{R}^n\setminus U)$,  
	\[
		\frac{\HM^d(E^{rec}\cap \cball(x,r))}{\omega_d r^d}\leq \tau
		\text{ and }\frac{\HM^d(E\cap \cball(x,r))}{\omega_d r^d}\leq
		(1+\tau)\Theta^{\ast d}(E,x) \text{ for any } 0<r\leq
		2\rho,
	\]
	and 
	\[
		\frac{\HM^d(E^{irr}\cap \cball(x,\rho))}{\omega_d \rho^d}\geq 
		(1-2\tau)\Theta^{\ast d}(E,x). 
	\]
	For any $i_1, \cdots, i_n\in \mathbb{Z}$, we denote by
	$C_{i_1,\cdots,i_n}$ the cube given by $(\delta \rho i_1,\cdots,\delta \rho
	i_n)+[0,\delta \rho]^n$. Let $\mathscr{F}$ be the collection of all cubes
	$C_{i_1,\cdots,i_n}$ which is contained in $\oball(x,(1+3\sqrt{n}\delta)\rho)
	$. Put $\mathscr{F}'=\{\Delta\in \mathscr{F}: \Delta \subseteq \intr(\cup
	\mathscr{F}) \}$. By Theorem \ref{thm:ffp}, there exist a Lipschitz mapping
	$\phi:\mathbb{R}^n\to \mathbb{R}^n$, an upper semi-continuous function
	$\lambda:\mathbb{R}^n\to [0,\infty)$ and open set $W\subseteq \mathbb{R}^n$ 
	such that $\phi(x)=x$ for $x\in \mathbb{R}^n\setminus 
	\cup \mathscr{F}$,  $\phi(\Delta)\subseteq \Delta$ for any 
	$\Delta\in \mathscr{F}$, $E\subseteq W$, $\phi(W\cap
	\cball(x,\rho))\subseteq \cup \mathscr{F}_d'$, 
	$\lambda(x)=1$ for $x\in \mathbb{R}^n\setminus \cup \mathscr{F}$,
	for any $d$-set $Z\subseteq W$,
	\[
		\HM^d(\phi(Z))\leq \int_{x\in Z}\lambda(x)\ud \HM^d(x),
	\]
	and for any $\mathscr{K} \subseteq \mathscr{F}$
	\[
		\HM^d(\phi(E\cap (\cup \mathscr{K})))\leq \int_{E\cap (\cup
		\mathscr{K})}\lambda(x) \ud \HM^d(x)\leq c_0\HM^d(E\cap (\cup 
		\mathscr{K})).
	\]

	Since $\HM^d\mr (E_k\cap U)\wc \mu$, $E=U\cap \spt \mu$, $E\subseteq W$ and
	$W$ is open, we see that 
	\[
		\lim_{k\to \infty}\HM^d(E_k\cap U\setminus W)=0,
	\]
	and 
	\[
		\lim_{k\to \infty}\HM^d(\phi(E_k\cap U\setminus W))=0.
	\]
	Setting $D=\cup \mathscr{F}$, $\cball_{\rho}'=\cball(x,(1+3\sqrt{n}\delta)
	\rho)$ and $\cball_{\rho}=\cball(x,\rho)$, then by \eqref{eq:ffp4}, we have
	that
	\[
		\HM^d(\intr(D)\cap \phi(W))=0,
	\]
	thus $\phi(E_k\cap \cball_{\rho}\cap W)\subseteq \intr(D)\cap \phi(W)$, and
	\[
		\HM^d(\phi(E_k\cap \cball_{\rho}\cap W))=0,\ \lim_{k\to\infty}
		\HM^d(\phi(E_k\cap \cball_{\rho}'\setminus W))=0,
	\]
	\[
		\HM^d(\phi(E_k\cap \cball_{\rho}'))\leq \HM^d(\phi(E_k\cap
		\cball_{\rho}'\setminus W))+\int_{E_k\cap W\cap
		\cball_{\rho}'\setminus \cball_{\rho}}\lambda(x)\ud \HM^d(x).
	\]
	Thus, setting $\rho_1=(1+3\sqrt{n}\delta)\rho$ and
	$\oball_{\rho}=\oball(x,\rho)$, we have that
	\[
		\begin{aligned}
			\limsup_{k\to \infty}\HM^d \left( \phi (E_k\cap \cball_{\rho}')\right)&\leq
			\limsup_{k\to \infty}\int_{W\cap \cball_{\rho}'\setminus \cball_{\rho}}
			\lambda\ud \HM^d\mr E_k \leq \int_{W\cap \cball_{\rho}'\setminus 
			\oball_{\rho}} \lambda\ud \mu\\
			&\leq c_2\int_{W\cap \cball_{\rho}'\setminus \oball_{\rho}} \lambda\ud
			\HM^d\mr E
			\leq c_2c_0\HM^d(E\cap W\cap \cball_{\rho}'\setminus \oball_{\rho})\\
			&\leq c_2c_0\HM^d(E\cap \cball_{\rho}'\setminus \oball_{\rho})\\
			&\leq
			c_2c_0\left((1+\tau)\rho_1^d-(1-2\tau)\rho^d\right)\Theta^{\ast
			d}(E,x)\omega_d \\
			&\leq c_2c_0\left((2+2^d)\tau+2^{d+1}\sqrt{n} \delta\right)\omega_d \rho^d.
		\end{aligned}
	\]
	We have that 
	\[
		\begin{aligned}
			\mu(\oball(x,\rho_1))&\leq \liminf_{k\to \infty}\HM^d \left(
			E_k\cap \oball(x,\rho_1)\right)\\
			&\leq \liminf_{k\to \infty}\left( M(2\rho_1)\HM^d \left( \phi
			(E_k\cap \cball(x,\rho_1))\right) +\varepsilon(2\rho_1)\right)\\
			&\leq M(2\rho_1)c_2c_02^{d+1}(\tau+\sqrt{n}\delta)\omega_d \rho^d+
			\varepsilon(2 \rho_1).
		\end{aligned}
	\]
	Thus 
	\[
		\Theta^{\ast d}(\mu,x)\leq	 M(0+)c_2c_0 2^{d+1}(\tau+\sqrt{n}\delta)
		<\frac{1}{2 c_2},
	\]
	and this is a contradiction since $\Theta_{\ast}^d(\mu,x)\geq 1/c_2$.

\end{proof}
The following corollary is a direct consequence of Lemma \ref{le:density}, 
Lemma \ref{le:lden} and Proposition \ref{prop:limitradon}.
\begin{corollary}
	\label{cor:rlr}
	Suppose that $E_k\in \QM_d(\mathbb{R}^n,U,M_k,\varepsilon_k)$,
	$M(r)=\limsup_{k}M_k(r)$,
	$\varepsilon(r)=\limsup_k\varepsilon_k(r)$ and $\HM^d\mr (E_k\cap U) \wc \mu$. 
	If $M(0+)<\infty$ and $\lim_{r\to 0}r^{-d}\varepsilon(r)=0$, then $U\cap
	\spt \mu$ is $d$-rectifiable and there is a constant $c_2=c_2(n,d,M(0+))>0$
	such that for $\HM^d$-a.e. $x\in U\cap \spt\mu$, 
	\[
		c_2^{-1}\leq \Theta^d(\mu,x)\leq c_2.
	\]
\end{corollary}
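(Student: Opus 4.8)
The plan is to read the statement off from the three results just established. Write $E:=U\cap\spt\mu$ and $\varepsilon_0:=\limsup_{r\to0}r^{-d}\varepsilon(r)$; by hypothesis $\varepsilon_0=0$, and since $M(0+)<\infty$ as well, both Lemma \ref{le:density} and Lemma \ref{le:lden} apply. First I would invoke Proposition \ref{prop:limitradon}: since $\lim_{r\to0}r^{-d}\varepsilon(r)=0$, the measure $\mu\mr U$ is $d$-rectifiable, hence can be written as $\mu\mr U=\theta\,\HM^d\mr R$ for some $d$-rectifiable $\HM^d$-measurable set $R\subseteq U$ and a Borel function $\theta\colon R\to(0,\infty)$ (discarding the $\theta=0$ part of a carrier). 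By the structure theory of rectifiable measures, $\Theta^d(\mu,x)$ exists for $\HM^d$-a.e.\ $x$, equals $\theta(x)$ for $\HM^d$-a.e.\ $x\in R$, and equals $0$ for $\HM^d$-a.e.\ $x\notin R$.

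Next, apply Lemma \ref{le:density} with $\varepsilon_0=0$: for every $x\in E$,
\[
	\Theta^{\ast d}(\mu,x)\leq 2c_1M(0+)\omega_d^{-1}\exp\!\big(4d(n-d)^2c_0^2M(0+)^2\big)=:c_2',
\]
which is a finite constant depending only on $n$, $d$ and $M(0+)$. Likewise, apply Lemma \ref{le:lden} with $\varepsilon_0=0$: for $\HM^d$-a.e.\ $x\in E$,
\[
	\Theta_{\ast}^d(\mu,x)\geq \big(2\omega_dc_0M(0+)\big)^{-1}=:1/c_2''.
\]
Setting $c_2:=\max\{c_2',c_2''\}$, which again depends only on $n$, $d$, $M(0+)$, we obtain $c_2^{-1}\leq\Theta_{\ast}^d(\mu,x)\leq\Theta^{\ast d}(\mu,x)\leq c_2$ for $\HM^d$-a.e.\ $x\in E$.

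Finally I would combine these: for $\HM^d$-a.e.\ $x\in E$ the density $\Theta^d(\mu,x)$ exists (by rectifiability of $\mu\mr U$) and satisfies $\Theta^d(\mu,x)\geq c_2^{-1}>0$; on the other hand $\Theta^d(\mu,x)=0$ for $\HM^d$-a.e.\ $x\notin R$, so necessarily $\HM^d(E\setminus R)=0$. Thus $E=U\cap\spt\mu$ differs from the $d$-rectifiable set $R$ by an $\HM^d$-null set and is therefore $d$-rectifiable; the lower density bound together with a Besicovitch covering argument also shows $\HM^d\mr E$ is locally finite, by comparison with $\mu$. On $E$ we have $\Theta^d(\mu,x)=\theta(x)$ for $\HM^d$-a.e.\ $x$, whence $c_2^{-1}\leq\Theta^d(\mu,x)\leq c_2$ for $\HM^d$-a.e.\ $x\in U\cap\spt\mu$, as claimed. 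I do not expect a serious obstacle here, the corollary being essentially a repackaging of the last three results; the only point requiring (standard) care is the passage from \textit{$\mu\mr U$ is a rectifiable measure} plus \textit{$\spt\mu$ carries a uniform positive lower $d$-density} to \textit{$\spt\mu$ is a rectifiable set}, which is precisely what the two displayed density estimates are designed to supply.
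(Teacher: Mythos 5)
Your argument is correct and is essentially the paper's intended proof: the paper offers no separate argument, stating only that the corollary is a direct consequence of Lemma \ref{le:density}, Lemma \ref{le:lden} and Proposition \ref{prop:limitradon}, which is exactly what you assemble, with the constants read off correctly. The only cosmetic difference is that your step upgrading rectifiability of the measure $\mu\mr U$ to rectifiability of the set $U\cap\spt\mu$ (via the lower density bound forcing $\HM^d(E\setminus R)=0$) is already implicit in the paper, since the proof of Proposition \ref{prop:limitradon} in fact establishes set rectifiability of $U\cap\spt\mu$ directly.
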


\begin{lemma} \label{le:mp}
	Let $A\subseteq \oball(x,r)$ be a convex set in $\mathbb{R}^{n}$. Let $P$ be
	a $d$-plane through $x$. Suppose that $0<\varepsilon, \delta<1/2$,
	$A\subseteq  (P+\cball(0,\varepsilon r))\cap \cball(x,(1-\delta)r)$. Then 
	there is a Lipschitz mapping $\varphi:\mathbb{R}^n\to \mathbb{R}^n$ such that 
	\[
		\varphi\vert_{\mathbb{R}^n\setminus
		\oball(x,r)}=\id_{\mathbb{R}^n\setminus \oball(x,r)},\
		\varphi\vert_{A}=P_{\natural}\vert_A, \varphi(\oball(x,r))\subseteq
		\oball(x,r)\text{ and }\Lip(\varphi)\leq 3+\varepsilon/\delta.
	\]
\end{lemma}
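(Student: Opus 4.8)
The plan is to define $\varphi$ explicitly as the identity perturbed by a distance cut-off that "switches off" the $P^{\perp}$-component near $A$. After a translation assume $x=0$; let $\pi$ and $\pi^{\perp}=\id-\pi$ be the orthogonal projections of $\mathbb{R}^n$ onto $P$ and onto $P^{\perp}$ respectively (so $\pi=P_{\natural}$), and put
\[
\psi(z)=\max\!\Big\{0,\ 1-\tfrac{1}{\delta r}\,\dist(z,\overline A)\Big\},\qquad
\varphi(z)=z-\psi(z)\,\pi^{\perp}(z)=\pi(z)+\big(1-\psi(z)\big)\pi^{\perp}(z).
\]
If $A$ is not closed, replace it by $\overline A$ throughout; note $\dist(\cdot,A)=\dist(\cdot,\overline A)$ and $A\subseteq\overline A\subseteq P+\cball(0,\varepsilon r)$. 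First I would record the elementary properties. Since $\overline A$ is convex and lies in $\cball(0,(1-\delta)r)$, one has $\{\psi>0\}=\overline A+\oball(0,\delta r)\subseteq\oball(0,r)$, so $\varphi=\id$ on $\mathbb{R}^n\setminus\oball(0,r)$; for $z\in A$ we have $\psi(z)=1$ and hence $\varphi(z)=\pi(z)=P_{\natural}(z)$; and by orthogonality $|\varphi(z)|^{2}=|\pi(z)|^{2}+(1-\psi(z))^{2}|\pi^{\perp}(z)|^{2}\le|z|^{2}$, so $\varphi(\oball(0,r))\subseteq\oball(0,r)$. Since $\psi$ is $\tfrac{1}{\delta r}$-Lipschitz and $z\mapsto\psi(z)\pi^{\perp}(z)$ is a product of a Lipschitz function with a Lipschitz map that is bounded on $\{\psi>0\}$, $\varphi$ is Lipschitz.

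For the Lipschitz bound I would invoke Rademacher's theorem and estimate $\|D\varphi\|$ almost everywhere. At a point of differentiability, $D\varphi(z)h=h-\psi(z)\pi^{\perp}h-\big(D\psi(z)\!\cdot\! h\big)\pi^{\perp}(z)$, so
\[
\|D\varphi(z)\|\le 1+\psi(z)+|D\psi(z)|\,|\pi^{\perp}(z)|=1+\psi(z)+|D\psi(z)|\,\dist(z,P).
\]
The only real point is the inequality $\psi(z)+|D\psi(z)|\,\dist(z,P)\le 1+\varepsilon/\delta$ a.e. Off $\overline A+\oball(0,\delta r)$ both summands vanish; on $\overline A+\oball(0,\delta r)$ one has $|D\psi(z)|\le\tfrac{1}{\delta r}$, and, taking $p$ a nearest point of $\overline A$ to $z$, $\dist(z,P)\le\dist(p,P)+|z-p|\le\varepsilon r+\dist(z,\overline A)$ because $p\in\overline A\subseteq P+\cball(0,\varepsilon r)$; therefore
\[
\psi(z)+|D\psi(z)|\,\dist(z,P)\le\Big(1-\tfrac{\dist(z,\overline A)}{\delta r}\Big)+\tfrac{1}{\delta r}\big(\varepsilon r+\dist(z,\overline A)\big)=1+\tfrac{\varepsilon}{\delta}.
\]
Hence $\|D\varphi\|\le 2+\varepsilon/\delta\le 3+\varepsilon/\delta$ a.e., and $\Lip(\varphi)\le 3+\varepsilon/\delta$.

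I expect the only mildly delicate step to be the cancellation in the last display: the transition layer of $\psi$ has width $\delta r$ measured by $\dist(\cdot,\overline A)$, while the perpendicular displacement being retracted is controlled by $\varepsilon r+\dist(\cdot,\overline A)$, so the two $\dist(\cdot,\overline A)$-terms telescope and leave the clean constant $1+\varepsilon/\delta$. All the remaining verifications — the boundary identity, the self-map property, and the a.e.\ differentiability used to pass from $\|D\varphi\|$ to $\Lip(\varphi)$ — are routine.
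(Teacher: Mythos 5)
Your construction is correct, and it is a complete self-contained argument for a statement the paper itself does not prove (it only cites Lemma 2.5 of \cite{FFL:2019}). The cut-off interpolation $\varphi(z)=z-\psi(z)\,P^{\perp}_{\natural}(z)$ with $\psi(z)=\max\{0,1-\dist(z,\overline A)/(\delta r)\}$ verifies all four required properties: the support computation $\{\psi>0\}=\overline A+\oball(0,\delta r)\subseteq\oball(x,r)$ gives the boundary identity, Pythagoras gives the self-map property, $\psi\equiv 1$ on $A$ gives $\varphi\vert_A=P_{\natural}\vert_A$, and the telescoping estimate $\psi(z)+|D\psi(z)|\dist(z,P)\le\bigl(1-\tfrac{\dist(z,\overline A)}{\delta r}\bigr)+\tfrac{1}{\delta r}\bigl(\varepsilon r+\dist(z,\overline A)\bigr)=1+\varepsilon/\delta$ is valid a.e., since the exceptional set $\{\dist(\cdot,\overline A)=\delta r\}$ is Lebesgue null and $D\psi=0$ a.e.\ where $\psi\in\{0,1\}$; having first checked that $\varphi$ is (locally) Lipschitz, the a.e.\ bound on $\|D\varphi\|$ upgrades to a global Lipschitz bound by integration along segments, exactly as you indicate. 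Two remarks: your argument in fact yields the better constant $2+\varepsilon/\delta$ (and even the crude product-rule estimate, without Rademacher, already gives $3+\varepsilon/\delta$), and it never uses convexity of $A$ --- only $\overline A\subseteq(P+\cball(0,\varepsilon r))\cap\cball(x,(1-\delta)r)$ --- so your version is slightly more general than the statement as given.
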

The proof can be found in \cite[Lemma 2.5]{FFL:2019}, so we omit it here.
\begin{lemma}\label{le:qmtpro}
	Suppose that $E_k\in \QM_d(\mathbb{R}^n,U,M_k,\varepsilon_k)$,
	$M(r)=\limsup_{k}M_k(r)$, $\varepsilon(r)=\limsup_{k}\varepsilon_k(r)$ 
	$\HM^d \mr (E_k\cap U)\wc \mu$, $M(0+)<\infty$ and 
	$\lim_{r\to 0}r^{-d}\varepsilon(r)=0$. For any $x\in U\cap \spt \mu$, if
	both of $\Theta^d(\mu,x)$ and $T=
	\Tan^d(\mu,x)$ exist, then for any decreasing sequence $\{r_m\}$ of
	numbers with $r_m\to 0$, setting $\cball_m=\cball(x,r_m)$, we have that 
	\begin{equation}\label{eq:qmtpro1}
		\limsup_{m\to \infty}\limsup_{k\to \infty}\frac{1}{r_m^d}
		\left(\HM^d(E_k^{rec}\cap \cball_m)-M(0+)\HM^d\left(T_{\natural}(E_k^{rec}\cap
		\cball_m)\right)\right)\leq 0,
	\end{equation}
	where $E_k^{rec}$ is the $d$-rectifiable part of $E_k\cap U$.
\end{lemma}
\begin{proof}
	Without loss of generality, we assume that $x=0\in U\cap \spt \mu$. Write 
	$E_k\cap U=E_{k}^{rec}\sqcup E_k^{irr}$. By Lemma 
	\ref{le:irr}, we see that $\HM^d\mr E_{k}^{irr}\wc 0$, thus $\HM^d \mr E_k^{rec}
	\wc \mu$. 	For any $0<\delta<1/10$, taking $\tau=\delta^2$, we see that 
	\[
		\lim_{m\to \infty}\frac{\mu(\cball(0,r_m))}{\omega_d r_m^d}=
		\Theta^d(\mu,0)\in [1/c_2,c_2] \text{
		and } \lim_{m\to \infty}\frac{\mu(\cball(0,r_m)\setminus
		\mathscr{C}(T,0,\tau))}{r_m^d}=0.
	\]

	Put $A_m=(T+\cball(0,\tau r_m))\cap \cball(0,(1-2\delta)r_m)$. Then we see
	that $A_m\subseteq T+\cball(0,\tau r_m)$ and $A_m+\cball(0,\delta r_m)\subseteq
	\oball(0,r_m)$. By Lemma \ref{le:mp}, there is a Lipschitz mapping
	$\varphi_m:\mathbb{R}^n\to \mathbb{R}^n$ such that
	$\varphi_m(\cball(0,r_m))\subseteq \cball(0,r_m)$,
	$\varphi_m\vert_{\mathbb{R}^n\setminus
	\oball(0,r_m)}=\id_{\mathbb{R}^n\setminus \oball(0,r_m)}$,
	$\varphi_m\vert_{A_m}=T_{\natural}\vert_{A_m}$ and $\Lip(\varphi_m)\leq
	3+\tau/\delta$. Setting $r_m'=(1-2 \delta)r_m$,
	$\cball_m'=\cball(0,r_m')$ and $\oball_m'=\oball(0,r_m')$, we get that 
	\[
		\begin{aligned}
			\HM^d(\varphi_m(E_k\cap \cball_m))&\leq \Lip(\varphi_m)^d\HM^d(E_k\cap
			\cball_m\setminus A_m)+\HM^d(T_{\natural}(E_k\cap A_m))\\
			&\leq 4^d \left( \HM^d(E_k\cap \cball_m\setminus
			\cball_m')+\HM^d(E_k\cap \cball_m\setminus
			\mathscr{C}(T,0,\tau)\right)+\HM^d(T_{\natural}(E_k\cap A_m)) .
		\end{aligned}
	\]
	Since $\HM^d(E_k\cap \cball(0,r_m))\leq M_k(2r_m)\HM^d(\varphi_m(E_k\cap
	\cball(0,r_m)))+ \varepsilon_k(2r_m)$, we get that 
	\begin{multline*}
		\HM^d(E_k\cap \cball_m)-M_k(2r_m)\HM^d(T_{\natural}(E_k\cap \cball_m))\\
		\leq 4^d M_k(2r_m)\left( \HM^d(E_k\cap \cball_m\setminus
		\cball_m')+\HM^d(E_k\cap \cball_m\setminus
		\mathscr{C}(T,0,\tau)\right) +\varepsilon_k(2r_m).
	\end{multline*}
	By Lemma \ref{le:irr}, we have that $\HM^d(E_{k}^{irr}\cap \cball_m)\leq
	\varepsilon_m(2r_m+)$, thus
	\begin{multline*}
		\limsup_{k\to \infty}\Big(\HM^d(E_k^{rec}\cap
		\cball_m)-M(2r_m)\HM^d(T_{\natural}(E_k^{rec}\cap \cball_m))\Big)\\
		\leq 4^d
		M(2r_m)\big(\mu(\cball_m\setminus \oball_m')+ \mu(\cball_m\setminus
		\mathscr{C}(T,0,\tau))\big)+2\varepsilon(2r_m+),
	\end{multline*}
	and 
\begin{multline*}
		\limsup_{m\to \infty}\limsup_{k\to \infty}\frac{1}{r_m^d}\Big(\HM^d(E_k^{rec}\cap
		\cball_m)-M(2r_m)\HM^d(T_{\natural}(E_k^{rec}\cap \cball_m))\Big)\\
		\leq M(0+)4^d(1-(1-2 \delta)^d)\omega_d \Theta^d(\mu,0).
\end{multline*}
	Let $\delta$ tend to 0, we get that 
	\[
		\limsup_{m\to \infty}\limsup_{k\to \infty}\frac{1}{r_m^d}\Big(\HM^d(E_k^{rec}\cap
		\cball_m)-M(2r_m)\HM^d(T_{\natural}(E_k^{rec}\cap \cball_m))\Big)\leq 0.
	\]
	Since $M(2r_m)\to M(0+)$, we get that \eqref{eq:qmtpro1} holds.
\end{proof}
\begin{lemma}
	\label{le:lpd}
	Suppose that $E_k\in \QM_d(\Omega,U,M_k,\varepsilon_k)$,
	$M(r)=\limsup_{k}M_k(r)$, $\varepsilon(r)=\limsup_{k}\varepsilon_k(r)$,
	$M(0+)<\infty$, $\lim_{r\to 0+}r^{-d}\varepsilon(r)=0$ and $\HM^d\mr (E_k\cap
	U)\wc \mu$. For any $x\in U\cap \spt \mu$, if $\Theta^d(\mu,x)\in 
	(0,\infty)$ and $T=\Tan^d(\mu,x)$ exists, then
	\begin{equation}\label{eq:lpd0}
		\liminf_{r\to 0}\liminf_{k\to \infty}\frac{\HM^d(T_{\natural}(E_k\cap
		\cball(x,r)))}{\omega_d r^d}\geq 1.
	\end{equation}

\end{lemma}
\begin{proof}
	For any $0<\delta<\min\{(10\sqrt{n})^{-1},(5^dc_0d M(0+))^{-1}\}$, taking $\tau=\delta^2$, setting
	$r'=(1-\delta)r$, $\cball_r=\cball(x,r)$, $\oball_r=\oball(x,r)$,
	$\cball_r'=\cball(x,r')$ and $A_r=\cball_r\cap (T+\cball(0,\tau r))$,
	we see that 
	\[
		\lim_{r\to 0}\frac{\mu(\cball(x,r))}{\omega_d r^d}=\Theta^d(\mu,x)
		\text{ and }\lim_{r\to 0}\frac{\mu(\cball(0x,r\setminus
		\mathscr{C}(T,x,\tau))}{r^d}=0.
	\]
	Let $\mathscr{F}^r$ be the collection of cubes $C_{i_1,\cdots,i_n}=(\tau r
	i_1,\cdots,\tau r i_n)+[0,\tau r]^n$ which is contained in
	$\oball(x,r)\setminus A_r$. We have that 
	\[
		\lim_{r\to 0}\frac{1}{r^d}\mu(\cup \mathscr{F}^r)=0.
	\]
	There exists $r_{\tau}>0$ such that $\mu(\cup \mathscr{F}^r)< c_0^{-1}
	(\tau r)^d$ for any $0<r\leq r_{\tau}$, thus there exit $k_{\tau,r}$ such that
	$\HM^d(E_k\cap \cup \mathscr{F}^r)<c_0^{-1}(\tau r)^d$ for any $k\geq k_{\tau,r}$. 
	By Theorem \ref{thm:ffp}, there 
	exist $\phi_{r,k}:\mathbb{R}^n\to \mathbb{R}^n$ such that $\phi_{r,k}(\cup
	\mathscr{F}^r) \subseteq \cup \mathscr{F}^r$,
	$\phi_{r,k}\vert_{\mathbb{R}^n\setminus \cup \mathscr{F}^r}=
	\id_{\mathbb{R}^n\setminus \cup \mathscr{F}^r}$, $\phi_{r,k}(E_k\cap \cup
	\mathscr{F}^r) \subseteq \cup\mathscr{F}_d^r $ and 
	\[
		\HM^d(\phi_{r,k}(E_k\cap \cup \mathscr{F}^r))\leq c_0\HM^d(E_k\cap \cup
		\mathscr{F}^r).
	\]
	Setting
	$A_r'=\cball_r'\cap (T+\cball(0,\tau r))$ and $H_r'=\cball_r'\cap
	(T+\cball(0,2\sqrt{n}\tau r))$, we have that $\cball_r'\setminus
	H_r'\subseteq \cup \mathscr{F}^r$ and $\phi_{r,k}\vert_{A_r'}=\id_{A_r'}$.
	By Lemma \ref{le:mp}, there exist Lipschitz mapping
	$\varphi_{r}:\mathbb{R}^n\to \mathbb{R}^n$ such that
	$\varphi_r(\cball_r)\subseteq \cball_r$,
	$\varphi_r\vert_{\mathbb{R}^n\setminus \cball_r}=\id_{\mathbb{R}^n\setminus
	\cball_r}$, $\varphi_r\vert_{H_r'}=T_{\natural}\vert_{H_r'}$ and
	\[
		\Lip(\varphi)\leq 3+2\sqrt{n}\tau/\delta\leq 3+2 \sqrt{n} \delta\leq 5.
	\]
	Hence 
	\[
		\HM^d(\varphi_r\circ \phi_{r,k}(E_k)\cap \cball_r'\setminus T)=0.
	\]

	We claim that $\varphi_r\circ \phi_{r,k}(E_k)\cap \cball_r'\cap T = T\cap
	\cball_r'$. Otherwise, we take $y\in T\cap \cball_r'\setminus \varphi_r
	\circ \phi_{r,k}(E_k)$, since $\varphi_r \circ \phi_{r,k}(E_k)$ is compact,
	there is a small ball $\oball(y,r)$ such that $\oball(y,r)\cap \varphi_r
	\circ \phi_{r,k}(E_k)=\emptyset$. Let $p_y$ the be a Lipschitz mapping such
	that $p_y\vert_{\mathbb{R}^n\setminus \oball(y,r)}=\id_{\mathbb{R}^n
	\setminus \oball(y,r)}$, $p_y(\cball_r')\subseteq \cball_r'$, and that
	$p_y\vert_{\cball_r'\setminus \oball(y,r/2)}$ is the radial projection onto
	$\partial \cball_r'$ centered at $y$. Then  
	\[
		\HM^d(p_y\circ\varphi_r\circ \phi_{r,k}(E_k\cap \cball_r))\leq
		5^d c_0\HM^d(E_k\cap \cball_r\setminus A_r'),
	\]
	and 
	\[
		\HM^d(E_k\cap \cball_r)\leq M_k(2r)\HM^d(p_y\circ\varphi_r\circ
		\phi_{r,k}(E_k\cap \cball_r))+\varepsilon_k(2r)\leq M_k(2r)5^d
		c_0\HM^d(E_k\cap \cball_r\setminus A_r')+\varepsilon_k(2r).
	\]
	Hence
	\[
		\mu(\oball_r)\leq \liminf_{k\to \infty}\HM^d(E_k\cap \oball_r)\leq 5^d c_0
		M(2r)\mu(\cball_r\setminus \intr(A_r'))+ \varepsilon(2r)
	\]
	and 
	\[
		\Theta^d(\mu,x)= \lim_{r\to 0}\frac{\mu(\oball_r)}{\omega_d r^d}\leq 5^dc_0
		M(0+)\limsup_{r\to 0}\frac{1}{r^d}(\mu(\cball_r)-\mu(\oball_r'))\leq 5^dc_0
		M(0+)\Theta^d(\mu,x)d \delta,
	\]
	this is a contradiction, so the claim holds, that is, $\varphi_r\circ 
	\phi_{r,k}(E_k)\cap \cball_r'\cap T = T\cap \cball_r'$.  Thus
	\[
		\omega_d r'^d\leq \HM^d(\phi_{r,k}(E_k)\cap \cball_r'\cap T)\leq 5^d
		c_0\HM^d(E_k\cap \cball_r\setminus A_r')+\HM^d(T_{\natural}(E_k\cap A')),
	\]
	we get that 
	\[
		\omega_d (1-\delta)^d\leq 5^dc_0\limsup_{r\to
		0}\frac{1}{r^d}(\mu(\cball_r)-\mu(\oball_r'))+\liminf_{r\to
		0}\liminf_{k\to \infty}\frac{1}{r^d}\HM^d(T_{\natural}(E_k\cap A_r')),
	\]
	and
	\[
		\liminf_{r\to 0}\liminf_{k\to \infty}\frac{\HM^d(T_{\natural}(E_k\cap
		\cball(x,r)))}{\omega_d r^d}\geq (1-\delta)^d-5^dc_0 \Theta^d(\mu,x) d
		\delta.
	\]
	Let $\delta\to 0$, we get that \eqref{eq:lpd0} holds.

\end{proof}
\begin{theorem}
	\label{thm:limitvarifold}
	Suppose that $E_k\in \QM_d(\Omega,U,M_k,\varepsilon_k)$,
	$M(r)=\limsup_{k}M_k(r)$, $\varepsilon(r)=\limsup_{k}\varepsilon_k(r)$,
	$M(0+)=1$, $\lim_{r\to 0+}r^{-d}\varepsilon(r)=0$ and $\var(E_k\cap U)\wc V$.
	For any $x\in U\cap \spt\|V\|$, if both of $\Theta^d(\|V\|,x)$ and $T=\Tan^d(\|V\|,x)$ exist, then
	\[
		\VarTan(V,x)=\{\var(T)\}.
	\]
\end{theorem}
\begin{proof}
	Without loss of generality, we assume that $x=0$, $E=U\cap \spt \|V\|$, $0\in E$,
	$T=\Tan^d(\|V\|,0)$. Write $E_k\cap U=E_{k}^{rec}\sqcup E_k^{irr}$. By Lemma 
	\ref{le:irr}, we see that $\HM^d\mr E_{k}^{irr}\wc 0$, thus $\var(E_k^{rec})
	\wc V$. 	

	For any $C\in \VarTan(V,0)$, there is a sequence $\{r_m\}$ of decreasing
	numbers with $r_m\to 0$ such that 
	\[
		C=\lim_{m\to \infty} (\scale{1/r_m})_{\sharp} V,
	\]
	thus
	\[
		C=\lim_{m\to \infty}\lim_{k\to
		\infty}(\scale{1/r_m})_{\sharp}\var(E_k^{rec})\ \text{ and
		}\ \|C\|=\lim_{m\to
		\infty}\lim_{k\to\infty}\HM^d\mr \scale{1/r_m}(E_k^{rec})
	\]
	For any $\varphi\in \cf_{c}(\mathbb{R}^n\times \grass{n}{d},\mathbb{R})$,
	we have that
	\[
		C\mr \cball(0,1)\times \grass{n}{d}(\varphi)=\lim_{m\to \infty}\lim_{k\to
		\infty}\frac{1}{r_m^d}\int_{E_k^{rec}\cap
		\oball(0,r_m)}\varphi\left(x,\Tan^d(E_{k}^{rec},x)\right)\ud \HM^d(x)
	\]

	Setting $\cball_m=\cball(0,r_m)$, since $M(0+)=1$, by Lemma \ref{le:qmtpro}
	and Lemma \ref{le:lpd}, we have that 
	\begin{equation}\label{eq:lvf9}
		\lim_{m\to \infty}\lim_{k\to \infty}\frac{1}{r_m^d}\Big(\HM^d(E_k^{rec}
		\cap \cball_m)- \HM^d\big(T_{\natural}(E_k^{rec}\cap \cball_m)\big)\Big)= 0
	\end{equation}
	and 
	\begin{equation}\label{eq:lvf10}
		\lim_{m\to \infty}\lim_{k\to \infty}\frac{1}{r_m^d}\Big(\HM^d(E_k^{rec}
		\cap \cball_m)- \HM^d(T\cap \cball_m)\Big)= 0.
	\end{equation}
	Since 
	\[
		\HM^d(T_{\natural}(E_k^{rec}\cap \cball_m))\leq \int_{x\in E_k^{rec}\cap
		\cball_m}\ap J_d(T_{\natural}\vert_{E_k^{rec}})\ud \HM^d(x),
	\]
	we get that 
	\begin{equation}\label{eq:lvf11}
		\lim_{m\to \infty}\lim_{k\to \infty}\frac{1}{r_m^d}\int_{E_k^{rec}
		\cap \cball_m}\left(1- J_d(T_{\natural}\vert_{E_k^{rec}})(x) 
		\right)\ud \HM^d(x)= 0
	\end{equation}
	and 
	\begin{equation}\label{eq:lvf12}
		\lim_{m\to \infty}\lim_{k\to \infty}\frac{1}{r_m^d} \left(\int_{E_k^{rec}
		\cap \cball_m}J_d(T_{\natural}\vert_{E_k^{rec}})(x) 
		\ud \HM^d(x) - \int_{T\cap \cball_m} 
		\ud \HM^d(x)\right) = 0.
	\end{equation}
	Setting $q_k(x)=\Tan^dE_k^{rec},x)$, by Lemma 11.4 in \cite{FK}, we have that 
	\[
		\|q_k(x)-T\|^2\leq 2 \left(1-\ap
		J_d(T_{\natural}\vert_{E_k^{rec}})(x)\right).
	\]
	Hence, setting $E_k'=E_k^{rec}$, we have that
	\[
		\begin{aligned}
			\left(\int_{E_k'\cap \cball_m}\|q_k(x)-T\| \ud \HM^d(x)\right)^2&\leq
			\int_{E_k'\cap \cball_m} \ud \HM^d(x)\int_{E_k'\cap
			\cball_m}\|q_k(x)-T\|^2 \ud \HM^d(x)\\
			&\leq 2\HM^d(E_k'\cap \cball_m)\int_{E_k'\cap \cball_m} 
			\left(1-\ap J_d(T_{\natural}\vert_{E_k'})(x)\right)\ud \HM^d(x),
		\end{aligned}
	\]
	and 
	\[
		\lim_{m\to \infty}\lim_{k\to \infty}\frac{1}{r_m^d}\int_{E_k^{rec}\cap
		\cball_m}\|q_k(x)-T\| \ud \HM^d(x)=0.
	\]
	For any $\varphi\in \cf_{c}^{\infty}(\mathbb{R}^n\times
	\grass{n}{d},\mathbb{R})$ and $T\in \grass{n}{d}$, setting $\varphi_T =
	\varphi(\cdot,T)$, we see that $\varphi_T\in
	\cf_c^{\infty}(\mathbb{R}^n,\mathbb{R})$, thus
	\[
		\|C\|\mr \cball(0,1) (\varphi_T)=\lim_{m\to \infty}\lim_{k\to
		\infty}\frac{1}{r_m^d}\int_{E_k^{rec}\cap
		\cball_m}\varphi(x,T)\HM^d(x),
	\]
	setting $C_1=C\mr \cball(0,1)\times \grass{n}{d}$ and $\mu_1 = \|C\|\mr \cball(0,1)$, we have that 
	\[
		\begin{aligned}
			|C_1(\varphi) -\mu_1(\varphi_T)|&\leq  \lim_{m\to \infty}\lim_{k\to
			\infty}\frac{1}{r_m^d}\int_{E_k^{rec}\cap
			\cball_m}\left|\varphi\left(x,\Tan^d(E_{k}^{rec},x)\right)-\varphi(x,T)
			\right|\ud\HM^d(x)\\
			&\leq \limsup_{m\to \infty}\limsup_{k\to
			\infty}\frac{1}{r_m^d}\int_{E_k^{rec}\cap
			\cball_m}\|D\varphi\|\|q_k(x)-T\|\ud\HM^d(x)=0.
		\end{aligned}
	\]
	By \eqref{eq:lvf11}, we get that 
	\[
		\lim_{m\to \infty}\lim_{k\to
		\infty}\frac{1}{r_m^d}\int_{E_k^{rec}\cap
		\cball_m}\Big(1-\ap J_d(T_{\natural}\vert_{E_k^{rec}})(x)\Big)\varphi_T(x)
		\ud\HM^d(x)=0.
	\]
	By \eqref{eq:lvf12}, we get that
	\[
		\lim_{m\to \infty}\lim_{k\to \infty}\frac{1}{r_m^d} \left|\int_{E_k^{rec}
		\cap \cball_m}\varphi_T(x)J_d(T_{\natural}\vert_{E_k^{rec}})(x) 
		\ud \HM^d(x) - \int_{T\cap \cball_m} \varphi_T(x)
		\ud \HM^d(x)\right| = 0.
	\]
	We get that 
	\[
		\begin{aligned}
			|\mu_1(\varphi_T)-\var(T_1)(\varphi)|&\leq \limsup_{m\to \infty}
			\limsup_{k\to \infty}\frac{1}{r_m^d}\left( \left| \int_{E_k^{rec}\cap
		\cball_m}\Big(1-\ap J_d(T_{\natural}\vert_{E_k^{rec}})(x)\Big)\varphi_T(x)
		\ud\HM^d(x) \right|\right.\\
			& \quad + \left.\left|\int_{E_k^{rec}
		\cap \cball_m}\varphi_T(x)J_d(T_{\natural}\vert_{E_k^{rec}})(x) 
		\ud \HM^d(x) - \int_{T\cap \cball_m} \varphi_T(x)
		\ud \HM^d(x)\right|\right) = 0.
		\end{aligned}
	\]
	Hence 
	\[
		|C_1(\varphi) -\var(T_1)(\varphi)|\leq |C_1(\varphi) -\mu_1(\varphi_T)|
		+|\mu_1(\varphi_T)-\var(T_1)(\varphi)| =0,
	\]
	and $C\mr \cball(0,1)\times \grass{n}{d}=\var(T\cap
	\cball(0,1))$. Since $C$ is a cone, we get that $C=\var(T)$
\end{proof}
\begin{theorem}
	\label{thm:limv}
	Suppose that $E_k\in \QM_d(\Omega,U,M_k,\varepsilon_k)$,
	$M(r)=\limsup_{k}M_k(r)$, $\varepsilon(r)=\limsup_{k}\varepsilon_k(r)$,
	$M(0+)<\infty$, $\lim_{r\to 0+}r^{-d}\varepsilon(r)=0$ and $\var(E_k\cap
	U)\wc V$. For any $x\in U\cap \spt \|V\|$, if $\Theta^d(\|V\|,x)=1$ and
	$T=\Tan^d(\|V\|,x)$ exists, then
	\[
		\VarTan(V,x)=\{\var(T)\}.
	\]
\end{theorem}
\begin{proof}
	Similar to the proof of Theorem	\ref{thm:limitvarifold}, we assume that
	$E=U\cap \spt \|V\|$, $x=0\in E$, $E_k\cap U=E_{k}^{rec}
	\sqcup E_k^{irr}$. 	For any $C\in \VarTan(V,0)$, there is a sequence $\{r_m\}$
	of decreasing positive numbers with $r_m\to 0$ such that 
	\[
		C=\lim_{m\to \infty} (\scale{1/r_m})_{\sharp} V.
	\]
	Since $\Theta^d(\|V\|,0)=1$, by Lemma \ref{le:lpd}, we have that
	\[
		1\leq \liminf_{m\to \infty}\liminf_{k\to \infty}\frac{\HM^d(T_{\natural}(E_k\cap
		\cball(0,r_m)))}{\omega_d r_m^d}\leq \limsup_{m\to \infty}\limsup_{k\to \infty}\frac{\HM^d(E_k\cap
		\cball(0,r_m))}{\omega_d r_m^d}\leq 1.
	\]
	Setting $\cball_m=\cball(x,r_m)$, by Lemma \ref{le:irr}, we have that
	\[
		\limsup_{m\to 0}\limsup_{k\to \infty}\frac{1}{r_m^d}\HM^d(E_k^{irr}\cap
		\cball_m)=0,
	\]
	thus
	\[
		\lim_{m\to \infty}\lim_{k\to \infty}\frac{1}{r_m^d}\Big(\HM^d(E_k^{rec}
		\cap \cball_m)- \HM^d\big(T_{\natural}(E_k^{rec}\cap \cball_m)\big)\Big)= 0
	\]
	and 
	\[
		\lim_{m\to \infty}\lim_{k\to \infty}\frac{1}{r_m^d}\Big(\HM^d(E_k^{rec}
		\cap \cball_m)- \HM^d(T\cap \cball_m)\Big)= 0.
	\]
	The rest of the proof is the same as the proof of Theorem
	\ref{thm:limitvarifold}.
\end{proof}
\begin{theorem}
	\label{thm:density}
	Suppose that $E_k\in \QM_d(\mathbb{R}^n,U,M_k,\varepsilon_k)$,
	$M(r)=\limsup_{k}M_k(r)$, $\varepsilon(r)=\limsup_{k}\varepsilon_k(r)$,
	$\HM^d \mr (E_k\cap U)\wc \mu$, $M(0+)<\infty$ and $\lim_{r\to
	0}r^{-d}\varepsilon(r)=0$. Then for $\HM^d$-a.e. $x\in
	U\cap \spt \mu$, $\Theta^d(\mu,x)$ exists and  
	\[
		1\leq \Theta^{d}(\mu,x)\leq M(0+).
	\]
\end{theorem}
\begin{proof}
	By Corollary \ref{cor:rlr}, there is a constant $c_2>0$  
	such that for $\HM^d$-a.e. $x\in E$, $\Theta^d(\mu,x)$ exists and
	\[
		1/c_2\leq\Theta^d(\mu,x)\leq c_2.
	\]
	We take $x\in U\cap \spt \mu$, such that $\Theta^d(\mu,x)\in [1/c_2,c_2]$
	and $T=\Tan^d(\mu,x)$ exists.  By Lemma \ref{le:qmtpro}, we get that 
	\[
		\limsup_{r\to 0}\limsup_{k\to \infty}\frac{1}{r^d}\HM^d(E_k^{rec}\cap
		\cball(x,r))\leq M(0+)\omega_d.
	\]
	Since 
	\[
		\limsup_{k\to \infty}\HM^d(E_k^{irr}\cap \cball(x,r))\leq
		\varepsilon(2r) 
	\]
	and $\lim_{r\to 0}r^{-d}\varepsilon(2r)=0$,
	we get that
	\[
		\Theta^d(\mu,x)=\lim_{r\to 0}\frac{\mu(\oball(x,r))}{\omega_d
		r^d}\leq \limsup_{r\to 0}\limsup_{k\to \infty}\frac{1}{\omega_d r^d}
		\HM^d(E_k\cap \oball(x,r))\leq M(0+).
	\]

	By Lemma \ref{le:lpd}, we get that 
	\[
		\begin{aligned}
			\lim_{r\to 0}\frac{\mu(\cball(x,r))}{\omega_d r^d}&\geq \liminf_{r\to
			0}\limsup_{k\to \infty}\frac{1}{\omega_d r^d}\HM^d(\HM^d(E_k\cap\cball(x,r)))\\
			&\geq \liminf_{r\to 0}\liminf_{k\to \infty}\frac{1}{\omega_d
			r^d}\HM^d(\HM^d(T_{\natural}(E_k\cap\cball(x,r))))\geq 1.
		\end{aligned}
	\]

\end{proof}
\begin{corollary}[Theorem 3.4 in \cite{David:2003} and Lemma 3.12 in
	\cite{David:2009}]\label{co:lusc}
	Suppose that $E_k\in \QM_d(\mathbb{R}^n,U,M_k,\varepsilon_k)$,
	$M(r)=\limsup_{k}M_k(r)$, $\varepsilon(r)=\limsup_{k}\varepsilon_k(r)$ 
	and $\HM^d \mr (E_k\cap U)\wc \mu$, $E=\spt \mu$. If $M(0+)<\infty$ and $\lim_{r\to
	0}r^{-d}\varepsilon(r)=0$, then we have that  for any open set $O\subseteq
	U$,   
	\[
		\HM^d(E\cap O)\leq \liminf_{k\to \infty}\HM^d(E_k\cap O),
	\]
	and for any compact set $H\subseteq U$, 
	\[
		\limsup_{k\to \infty}\HM^d(E_k\cap H)\leq M(0+)\HM^d(E\cap H).
	\]
\end{corollary}

\begin{corollary}
	Suppose that $E_k\in \QM_d(\mathbb{R}^n,U,M_k,\varepsilon_k)$,
	$M(r)=\limsup_{k}M_k(r)$, $\varepsilon(r)=\limsup_{k}\varepsilon_k(r)$ 
	$\var(E_k\cap U)\wc V$, $E=\spt(\|V\|)$ and $\lim_{r\to
	0}r^{-d}\varepsilon(r)=0$.
	\begin{itemize}
		\item If $M(0+)=1$, then $V\mr U\times\grass{n}{d}=\var(E\cap U)$.
		\item If $M(0+)<\infty$ and there is an open set $O\subseteq U$ such that 
			\[
				\HM^d(E\cap O)=\lim_{k\to \infty}\HM^d(E_k\cap O),
			\]
			then $V\mr O\times\grass{n}{d}=\var(E\cap O)$.
	\end{itemize}
\end{corollary}
\begin{proof}
	If $M(0+)=1$, then the conclusion directly follows from Theorem
	\ref{thm:limitvarifold} and Proposition \ref{prop:limitradon}.
	If $M(0+)<\infty$ and $\HM^d(E\cap O)=\lim_{k\to \infty}\HM^d(E_k\cap O)$,
	by Theorem \ref{thm:density}, we see that $\Theta^d(\mu,x)=1$ for
	$\HM^d$-a.e. $x\in E\cap O$, then by Theorem \ref{thm:limv} and Proposition
	\ref{prop:limitradon}, we get the conclusion.
\end{proof}

\section{Qusaiminimal sets on $C^2$ submanifold}
For any nonempty set $A\subseteq \mathbb{R}^n$, we denote by
$\delta=\delta_A:\mathbb{R}^n\to \mathbb{R}$ the distant function defined by
$\delta(x)=\dist(x,A)$. We denote by $\Unp(A)$ the set of points $x\in \mathbb{R}^{n}$
such that there is a unique point $\xi_A(x)\in A$ satisfying that
$\delta(x)=|x-\xi_A(x)|$. Indeed, the mapping $\xi_A:\Unp(A)\to A$ is called
metric projection. For any $z\in A$, we define $\reach(A,z)$ to be the
supremum of numbers $r>0$ for which $\oball(x,r)\subseteq \Unp(A)$, and define 
the reach of $A$ by $\reach(A)=\inf\{\reach(A,z):z\in A\}$. 
By Remark 4.2 in \cite{Federer:1959}, we see that $\reach(A,\cdot)$ is
continuous on $A$. For any set
$W\subseteq \Unp(A)$, setting $\rho=\sup\{\dist(x,A):x\in W\}$ and
$R=\inf\{\reach(A,\xi(x)):x\in W\}$, by Theorem 4.8 (8) in \cite{Federer:1959},
we have that 
\begin{equation}\label{eq:mplip}
	\Lip(\xi\vert_{W})\leq \frac{R}{R-\rho}.
\end{equation}

\begin{lemma}\label{le:retr}
	Suppose $1\leq d\leq m\leq n$.
	Let $\Omega\subseteq \mathbb{R}^n$ be an $m$-dimensional closed submanifold 
	of class $C^2$. Let $U\subseteq \mathbb{R}^n$ be a bounded open set with
	$U\cap \Omega\neq \emptyset$ and $\overline{U}\cap \partial \Omega=\emptyset$. 
	If $E\in \QM_d(\Omega,U,M,\varepsilon)$, then there exist $r_0=r_0(\Omega,U)>0$
	and two nondecreasing functions $M'$ and $\varepsilon'$ such that $E\in
	\QM_d(\mathbb{R}^n,U,M', \varepsilon')$, $M'(r)\leq (1+r/r_0)^dM(r)$ and
	$\varepsilon'(r)\leq \varepsilon(r)$ for $0<r\leq r_0$.
\end{lemma}
\begin{proof}
	We take $\xi=\xi_{\Omega}$ and
	$r_0=\frac{1}{4}\inf\{\reach(\Omega,\xi(x)):x\in U\cap \Unp(\Omega)\}$.
	Since $\overline{U}\cap \partial \Omega=\emptyset$, and
	$\reach(\Omega,\cdot)$ is continuous on $\Omega$, we get that $r_0>0$. 
	For any $\varphi\in \mathcal{D}(\mathbb{R}^n,U)$ with
	$\diam(W_{\varphi}\cap \varphi(W_{\varphi}))\leq r_0$, we assume
	$W_{\varphi}\cap E\neq \emptyset$, otherwise we have nothing to do. Then
	$W=W_{\varphi}\cup \varphi(W_{\varphi})\subseteq \Unp(\Omega)$ and
	$\sup\{\dist(x,\Omega):x\in W\}\leq \diam(W)\leq r_0$, thus we get that
	\[
		\Lip(\xi\vert_{W})\leq \frac{2r_0}{2r_0-\diam(W)}\leq
		1+\frac{\diam(W)}{r_0}.
	\]
	Since $\xi\circ \varphi\in \mathcal{D}(\Omega,U)$, we get that 
	\[
		\begin{aligned}
			\HM^d(E\cap W_{\varphi})&\leq M(\diam W)\HM^d(\varphi(E\cap
			W_{\varphi}))+\varepsilon(\diam W)\\
			&\leq M(\diam W) \Lip(\xi_{W})^d\HM^d(\varphi(E\cap W_{\varphi}))+
			\varepsilon(\diam W)\\
			&\leq  M(\diam W) (1+r_0^{-1}\diam W)^d\HM^d(\varphi(E\cap W_{\varphi}))+
			\varepsilon(\diam W).
		\end{aligned}
	\]
	The conclusion holds with $M'(r)=(1+r/r_0)^dM(r)$ and
	$\varepsilon'(r)=\varepsilon(r)$ for $0<r\leq r_0$.
\end{proof}
\begin{theorem} \label{thm:vcp}
	Suppose $1\leq d\leq m\leq n$.
	Let $\Omega\subseteq \mathbb{R}^n$ be a closed $m$-dimensional submanifold 
	of class $C^2$. Let $U\subseteq
	\mathbb{R}^n$ be an open set such that $U\cap \Omega\neq \emptyset$ and
	$U\cap \partial \Omega=\emptyset$. Suppose that
	$\{E_k\}\subseteq \QM_d(\Omega,U,M_k,\varepsilon_k)$, $M(r)=\limsup_{k\to
	\infty}M_k(r)$, $\varepsilon(r)=\limsup_{k\to \infty}\varepsilon_k(r)$,
	$\HM^d\mr (E_k\cap U)\wc \mu$, $M(0+)<\infty$ and $\lim_{r\to 0+}
	r^{-d}\varepsilon(r)=0$.  Then we have that
	\begin{itemize}
		\item $U\cap \spt\mu$ is $d$-rectifiable, for $\HM^d$-a.e. $x\in U\cap
			\spt \mu$, $\Theta^d(\mu,x)$ exists and 
			\[
				1\leq \Theta^d(\mu,x)\leq M(0+);
			\]
		\item if $O\subseteq U$ is an open set and $\Theta^d(\mu,x)=1$ for
			$\mu$-a.e. $x\in O$, then $\var(O\cap E_k)$ converges to a varifold
			$V$
			with $V\mr O\times \grass{n}{d}=\var(O\cap \spt \mu)$. 
	\end{itemize}
	In particular, if $M(0+)=1$ and $\var(U\cap E_k)\wc V$, then $V\mr
	U\times\grass{n}{d}=\var(U\cap \spt \|V\|)$.
\end{theorem}
\begin{proof}
	For any closed ball $\cball(x,r)\subseteq U$, if $\oball(x,r)\cap \Omega\neq
	\emptyset$, by Lemma \ref{le:retr}, we see that 
	\[
		E_k\in \QM_d(\mathbb{R}^n,\oball,M_k',\varepsilon_k'),
	\]
	where $\oball=\oball(x,r)$, $M_k'(r)=(1+r/r_0)^dM_k(r)$ and
	$\varepsilon_k'(r)= \varepsilon_k(r)$ for $0<r\leq r_0$,
	$r_0=r_0(\Omega,\oball)>0$. Put $M(r)'=\limsup_{k\to \infty}M_k'(r)$ and
	$\varepsilon(r)'=\limsup_{k\to \infty}\varepsilon_k'(r)$. Then
	$M'(0+)=M(0+)<\infty$ and $\lim_{r\to 0+}r^{d}\varepsilon'(r)=\lim_{r\to
	0+}r^{d}\varepsilon(r)=0$. By Theorem \ref{thm:density}, we get that
	$\oball\cap \spt \mu$ is $d$-rectifiable, for $\HM^d$-a.e. $x\in \oball$,
	$\Theta^d(\mu,x)$ exists and $1\leq \Theta^d(\mu,x)\leq M(0+)$.

	For any closed ball $\cball(x,r)\subseteq O$, by Theorem \ref{thm:limv}, we
	get that $\var(O\cap E_k)$ converges to a varifold $V$, which satisfies
	\[
		V\mr O\times \grass{n}{d}=\var(O\cap \spt \|V\|).
	\]
\end{proof}

\section{Plateau's Problem}
Let $\Omega \subseteq \mathbb{R}^n$ be a nonempty closed subset, $B_0\subseteq
\Omega$ be a nonempty compact subset.  Let $G$ be an abelian group. Suppose 
that $L\subseteq \cech_{d-1}(B_0;G)$ is a subgroup. A compact set $E\subseteq
\Omega$ is called spanning $L$ (or whose  algebraic boundary contains $L$) if
$E\supseteq B_0$ and $\cech_{d-1}(i_{B_0,E})(L)=0$, where $i_{B_0,E}:B_0\to E$
is the inclusion mapping and $\cech_{d-1}(i_{B_0,E})$ is the homomorphism
induced by $i_{B_0,E}$. We denote by $\check{C}=\check{C}(\Omega, B_0,G,L)$ the
collection of compact subsets in $\Omega$ which are spanning $L$. Elements in
$\check{C}$ are called competitors for the Plateau's problem with \v{C}ech
homological conditions. A sequence $\{E_k\}\subseteq \check{C}$ is called a
minimizing sequence if $\HM^d(E_k\setminus B_0)\to \inf\{\HM^d(E\setminus
B_0):E\in \check{C}\}$. If there is a compact set $E\in \check{C}$ such that 
$\HM^d (E\setminus B_0)= \inf\{\HM^d(E\setminus B_0):E\in \check{C}\}$, then
we call $E$ a minimizer. It is possible $\check{C}=\emptyset$, if we do not
presuppose any condition on $\Omega$ and $B_0$. It is quite easy to see that, 
for any $E\in \check{C}$ and $\varphi\in \mathcal{D}(\Omega,U)$, $\varphi(E)
\in \check{C}$, where $U$ is any open set with $U\cap B_0=\emptyset$.

\begin{lemma}\label{le:CI}
	Let $A_1$ and $A_2$ be two compact subsets in $\mathbb{R}^n$. If
	$\cech_{d-1}(A_1\cap A_2;G)=0$, then the homomorphism
	$\cech_{d-1}(i_{A_1,A_1\cup A_2})$ is injective. 
\end{lemma}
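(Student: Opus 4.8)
The plan is to deduce the statement from the Mayer--Vietoris sequence in {\v C}ech homology of the compact triad $(A_1\cup A_2;A_1,A_2)$. If the portion
\[
	\cech_{d-1}(A_1\cap A_2;G)\longrightarrow \cech_{d-1}(A_1;G)\oplus\cech_{d-1}(A_2;G)\longrightarrow \cech_{d-1}(A_1\cup A_2;G)
\]
is exact at the middle term, then $\cech_{d-1}(A_1\cap A_2;G)=0$ makes the second map injective, and since $\cech_{d-1}(i_{A_1,A_1\cup A_2})$ is precisely this map restricted to the first summand, it is injective as well. Thus everything reduces to establishing this piece of Mayer--Vietoris for {\v C}ech homology of compact subsets of $\mathbb{R}^n$.

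To do that I would first compute by neighbourhoods. Set $U_i^k=\{x\in\mathbb{R}^n:\dist(x,A_i)<1/k\}$; these are open, decreasing in $k$, and because $A_1,A_2$ are closed the families $\{U_1^k\cup U_2^k\}_k$ and $\{U_1^k\cap U_2^k\}_k$ are neighbourhood bases of $A_1\cup A_2$ and of $A_1\cap A_2$. Since compact subsets are taut in the ANR $\mathbb{R}^n$ and open subsets of $\mathbb{R}^n$ have the homology (singular $=$ {\v C}ech) of CW complexes, {\v C}ech homology of each of $A_1,A_2,A_1\cup A_2,A_1\cap A_2$ is the inverse limit over $k$ of the singular homology of the corresponding open set. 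For each fixed $k$ the pair $U_1^k,U_2^k$ is open, so the ordinary Mayer--Vietoris sequence of $(U_1^k\cup U_2^k;U_1^k,U_2^k)$ is exact; the inclusions $U_i^{k+1}\subseteq U_i^k$ turn these into an inverse system over $k\in\mathbb{N}$ of exact sequences, compatibly with the maps induced by $A_i\hookrightarrow A_1\cup A_2$ and $A_1\cap A_2\hookrightarrow A_i$.

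Then comes the element chase. Let $x=(x_k)_k$ lie in the kernel of $\cech_{d-1}(i_{A_1,A_1\cup A_2})$; componentwise each $x_k\in H_{d-1}(U_1^k)$ dies in $H_{d-1}(U_1^k\cup U_2^k)$, so $(x_k,0)$ is killed by the $k$-th Mayer--Vietoris map $H_{d-1}(U_1^k)\oplus H_{d-1}(U_2^k)\to H_{d-1}(U_1^k\cup U_2^k)$ and hence equals $\alpha_k(w_k)$ for some $w_k\in H_{d-1}(U_1^k\cap U_2^k)$, where $\alpha_k$ is the Mayer--Vietoris map out of $H_{d-1}(U_1^k\cap U_2^k)$ and $\ker\alpha_k=\im\partial_k$ with $\partial_k$ the connecting homomorphism. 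By naturality of Mayer--Vietoris the bonding maps $H_{d-1}(U_1^{k+1}\cap U_2^{k+1})\to H_{d-1}(U_1^k\cap U_2^k)$ carry the nonempty affine subset $\Sigma_k:=\alpha_k^{-1}(x_k,0)$ (a coset of $\im\partial_k$) into $\Sigma_{k-1}$. Once one knows $\varprojlim_k\Sigma_k\neq\emptyset$, any point of it is a coherent system $(w_k)\in\varprojlim_k H_{d-1}(U_1^k\cap U_2^k)=\cech_{d-1}(A_1\cap A_2;G)=0$; hence $w_k=0$, hence $x_k=0$ for all $k$ (being the image of $w_k$ under $H_{d-1}(U_1^k\cap U_2^k)\to H_{d-1}(U_1^k)$), i.e.\ $x=0$.

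The one genuine obstacle is the nonemptiness of $\varprojlim_k\Sigma_k$: it measures the failure of $\varprojlim$ to be right exact on this system and is controlled by $\varprojlim^1$ of the inverse system $\{\im\partial_k\}_k$, itself a quotient of $\varprojlim^1$ of the degree-$d$ homology system of the neighbourhoods of $A_1\cup A_2$. I would clear this with a Mittag--Leffler argument carried out on the cofinal inverse system of nerves of \emph{finite} open covers of the compacta (rather than on the neighbourhood system): for a finite complex $H_*(\cdot\,;G)$ is a finitely generated $G$-module when $G$ is finitely generated, a finite-dimensional vector space when $G$ is a field, and a compact topological group when $G$ is compact topological abelian, so in each case the relevant inverse systems are Mittag--Leffler, $\varprojlim^1$ vanishes, and the Mayer--Vietoris sequence of the compact triad is exact. (A minor point: the nerve of the trace of a cover on $A_1$, intersected with the nerve of its trace on $A_2$, is a priori larger than the nerve of the trace on $A_1\cap A_2$; this is harmless because, by compactness, any member of a sufficiently fine cover meeting both $A_1$ and $A_2$ lies in a prescribed neighbourhood of $A_1\cap A_2$, so the two compute the same limit.) With exactness in hand the injectivity follows as in the first paragraph.
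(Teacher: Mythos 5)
Your first paragraph is exactly the deduction the paper makes: once the Mayer--Vietoris sequence of the compact triad $(A_1\cup A_2;A_1,A_2)$ is exact at the middle term, the hypothesis $\cech_{d-1}(A_1\cap A_2;G)=0$ forces the map out of the middle term to be injective, and $\cech_{d-1}(i_{A_1,A_1\cup A_2})$ is its restriction to the first summand. The paper gets that exactness by citation (\cite{ES:1952}, Theorem 15.3, together with the fact that every compact triad is proper), whereas you try to prove it from scratch by passing to inverse limits of the Mayer--Vietoris sequences of open neighbourhoods, and that is where the genuine gap lies. You correctly isolate the obstruction --- nonemptiness of $\varprojlim_k\Sigma_k$, i.e.\ a Mittag--Leffler/${\varprojlim}^{1}$-vanishing condition for the relevant inverse systems --- but your way of clearing it does not cover the stated generality: the lemma is asserted for an \emph{arbitrary} abelian group $G$, while your closing argument treats only $G$ finitely generated, $G$ a field, and $G$ compact topological abelian. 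Moreover the finitely generated case is wrong as argued: an inverse system of finitely generated abelian groups need not be Mittag--Leffler and can have nonzero ${\varprojlim}^{1}$ (already $\mathbb{Z}\xleftarrow{\times 2}\mathbb{Z}\xleftarrow{\times 2}\cdots$), and such systems genuinely arise as the degree-$d$ homology of neighbourhood (or nerve) systems of compacta in $\mathbb{R}^n$ --- this is the solenoid-type phenomenon responsible for the failure of the exactness axiom for \v{C}ech homology with $\mathbb{Z}$-coefficients. So for general $G$ (indeed already for $G=\mathbb{Z}$) your limit argument does not deliver the middle-term exactness you need, and the proof is incomplete.

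The cases your Mittag--Leffler argument does handle (field coefficients, where images stabilize by finite dimensionality, and compact coefficients, where the cosets $\Sigma_k$ form a nonempty inverse system of compact sets) are precisely the coefficient hypotheses under which \v{C}ech homology of compact pairs is known to be exact; this is no accident, and it is also the regime in which the paper's citation of the exact Mayer--Vietoris sequence is safely justified. To repair your proposal you must either restrict the coefficient group accordingly (and then your argument is essentially a hands-on proof of what the paper cites), or find an argument for the injectivity statement that does not pass through full exactness of the limit sequence; as written, the step ``$\varprojlim_k\Sigma_k\neq\emptyset$'' is unproved in the generality claimed.
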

\begin{proof}
	By \cite[Theorem 15.3 in p.39]{ES:1952} and the fact that every compact
	triad is a proper triad \cite[p.257]{ES:1952}, we get that the
	Mayer-Vietoris sequence of triad $(A_1\cup A_2; A_1,A_2)$ for {\v C}ech
	homology is exact. That is, it holds the following exact sequence:
	\[
		\cdots \to \cech_{d-1}(A_1\cap A_2;G)\ora{\psi}
		\cech_{d-1}(A_1;G)\oplus\cech_{d-1}(A_2;G)\ora{\phi}
		\cech_{d-1}(A_1\cup A_2;G)\to
		\cdots\to 0,
	\]
	where homomorphism $\psi$ and $\phi$ are defined by
	\[
		\psi(u)=\cech_{d-1}(i_{A_1\cap
		A_2,A_1})(u)-\cech_{d-1}(i_{A_1\cap A_2,A_2})(u)
	\] 
	and
	\[
		\phi(v_1,v_2)=\cech_{d-1}(i_{A_1,A_1\cap
		A_2})(v_1)+\cech_{d-1}(i_{A_2,A_1\cap A_2})(v_2).
	\]

	Since $\cech_{d-1}(A_1\cap A_2;G)=0$, we get that $\phi$ is injective.
	Let
	\[
		j:\cech_{d-1}(A_1;G)\to\cech_{d-1}(A_1;G)\oplus\cech_{d-1}(A_2;G)
	\]
	be the homomorphism defined by $j(v)=(v,0)$. Then 
	$ \cech_{d-1}(i_{A_1,A_1\cup A_2})=\phi\circ j $ is injective.

\end{proof}
\begin{lemma}\label{le:cswl}
	Let $B_0\subseteq \mathbb{R}^n$ be a nonempty compact subset.
	Let $G$ be an abelian group. Suppose that $L\subseteq \cech_{d-1}(B_0;G)$ is
	a subgroup. Let $\{E_k\}\subseteq \check{C}(\mathbb{R}^n,B_0,G,L)$ be a 
	sequence of compact subsets, which is uniformly bounded. Suppose that
	$\HM^d\mr (E_k\setminus B_0)\wc \mu$. Then $B_0\cup \spt \mu$ is spanning $L$.
\end{lemma}
\begin{proof}
	We assume $B_0\cup \cup_k E_k\subseteq \cball(0,R)$, $R>0$, put $E=B_0\cup \spt 
	\mu$ and $U=\oball(0,R+1)\setminus E$. Then $E$ is compact and contained in
	$\cball(0,R)$, $U$ is open. Let $\mathscr{F}$ be the family of cubes
	which raise from a Whitney decomposition of $U$ with
	the following conditions hold:
	\begin{itemize}
		\item $\mathscr{F}$ consists of interior disjoint dyadic cubes  and 
			$\cup_{Q\in \mathscr{F}} Q=U$;
		\item $\sqrt{n}\ell(Q)\leq \dist(Q,\mathbb{R}^n\setminus U)\leq 
			4\sqrt{n}\ell(Q)$;
		\item if $Q_1\cap Q_2\neq \emptyset$, then $1/4\leq
			\ell(Q_1)/\ell(Q_2)\leq 4$.
	\end{itemize}

	We put $F_k=E+\oball(0,4^{-k+1})$ and let $\mathcal{Q}^{k}$ be the collection of
	cubes $Q$ in $\mathscr{F}$ whose sidelength is no less than $4^{-k}$.  
	Setting $G_k=\bigcup\{Q:Q\in \mathcal{Q}^k\}$, we have that $G_k
	\subseteq \intr(G_{k+1})$ and $\mathbb{R}^n\setminus\intr(G_{k})\subseteq F_k$.

	Since $\mu(U)=0$, there exists a increasing sequence $\{m_k\}$ of positive
	integers such that 
	\[
		\HM^d(E_m\cap G_k)<c_0^{-1} 4^{-kd},\ \forall m\geq m_k .
	\]
	We take $E_k'=E_{m_k}$.	By Theorem \ref{thm:ffp}, there exist Lipschitz mappings
	$\phi_k:\mathbb{R}^n\to \mathbb{R}^n$ such that $\phi_k(Q)\subseteq Q$ for
	$Q\in \mathcal{Q}^k$, $\phi_k\vert_{\mathbb{R}^n\setminus G_k}=
	\id_{\mathbb{R}^n\setminus G_k}$ and $\intr(G_k)\cap \phi_k(E_k')\subseteq 
	\cup \mathcal{Q}_{d-1}^k$. Since $E_k'$ is spanning $L$, we get that $\phi_k(E_k')$
	is spanning $L$. Setting $E_k''=\phi_k(E_k')\setminus \intr(G_k)$ and
	$H_k=\overline{\intr(G_k)\cap \phi_k(E_k')}$, we see that $H_k\cap
	E_k''\subseteq \cup \mathcal{Q}_{d-2}^k$ and $E_k''\cup H_k=\phi_k(E_k')$,
	thus $\HM^{d-1}(H_k\cap E_k'')=0$. Applying Theorem \Rnum{7} 3 in 
	\cite{HW:1941}, we get that 
	\[
		\cech_{d-1}(H_k\cap E_k'';G)=0.
	\]
	By lemma \ref{le:CI}, we get that the homomorphism
	$\cech_{d-1}\big(i_{E_k'',\phi_k(E_k')}\big)$ is injective.
	Since 
	\[
		i_{B_0,\phi_k(E_k')}=i_{E_k'',\phi_k(E_k')}\circ i_{B_0,E_k''},
	\]
	we have that 
	\[
		\cech_{d-1}\big(i_{B_0,\phi_k(E_k')}\big)=\cech_{d-1}
		\big(i_{E_k'',\phi_k(E_k')}\big)\circ \cech_{d-1}\big(i_{B_0,E_k''}\big).
	\]
	Since $L\subseteq \ker\left( \cech_{d-1}\big(i_{B_0,\phi_k(E_k')}\big) \right)$ and
	that $\cech_{d-1}\big(i_{E_k'',\phi_k(E_k')}\big)$ is injective, we get that 
	\[
		L\subseteq \ker\left( \cech_{d-1}\big(i_{B_0,E_k''}\big) \right).
	\]
	Since $\mathbb{R}^n\setminus \intr(G_k)\subseteq F_k$, we get that
	$E''_k\subseteq F_k$, thus  
	\[
		L\subseteq \ker\left( \cech_{d-1}\big(i_{B_0, F_k}\big) \right).
	\]
	By Lemma 12.2 in \cite{FK}, we get that 
	\[
		L\subseteq \ker\left( \cech_{d-1}\big(i_{B_0, E}\big) \right).
	\]

\end{proof}
\begin{proof}[Proof of Theorem \ref{thm:RPM}]
	Let $\{E_k\}$ be a uniformly bounded minimizing sequence. That is, $E_k\in \check{C}$ and 
	\[
		\HM^d(E_k\setminus B_0)\to \inf\{\HM^d(E\setminus B_0):E\in \check{C}\}.
	\]
	Without loss of generality, we assume that $0<\inf\{\HM^d(E\setminus B_0):E\in
	\check{C}\}<\infty$.
	Put $U=\Omega\setminus B$, $\zeta_k=\inf\big\{\HM^d(E_k)-\HM^d(\varphi(E_k)):
	\varphi\in \mathcal{D}(\Omega,U))\big\}$, $M_k\equiv 1$,
	$\varepsilon_k\equiv \zeta_k$, $M(r)=\limsup_{k\to \infty}M_k(r)$ and
	$\varepsilon(r)=\limsup_{k\to \infty}\varepsilon_k(r)$. Then $M\equiv 1$,
	$\varepsilon\equiv 0$ and 
	\[
		E_k\in \QM_d(\Omega,U,M_k,\varepsilon_k).
	\]
	Since $\{E_k\}$ is uniformly bounded and $\inf\{\HM^d(E\setminus B_0):E\in
	\check{C}\}<\infty$, we can find a subsequence $\{E_{k_m}\}$ such that
	$\HM^d\mr (U\cap E_{k_m})$ converges to a Radon measure $\mu$. By Theorem
	\ref{thm:vcp}, we get that $\var(U\cap E_{k_m})\wc V$ and $V\mr U\times
	\grass{n}{d}=\var(U\cap \spt \mu)$. By Lemma \ref{le:cswl}, we get that
	$E_{\infty}=B_0\cup \spt \mu\in \check{C}$. Thus 
	\[
		\HM^d(E_{\infty}\setminus B_0)= \HM^d(U\cap \spt\mu)\leq \liminf_{m\to
		\infty}\HM^d(U\cap E_{k_m})=\inf\{\HM^d(E\setminus B_0):E\in \check{C}\}.
	\]
	Therefore, $E_{\infty}$ is a minimizer.
\end{proof}

\begin{lemma}\label{le:hc}
	Let $U\subseteq \mathbb{R}^n$ be an open set.  Let $\{ E_k\}$ be a sequence
	of closed subsets such that $U\cap E_k$ is uniformly bounded. Suppose that
	$\limsup_{k\to \infty}\HM^d(U\cap E_k)<\infty$ and $\HM^d\mr (U\cap E_k)\wc
	\mu$. Then, for any $\tau>0$, there exist a subsequence $\{E_{k_i}\}$ and a
	sequence of Lipschitz mappings $\{\phi_i\}\subseteq  \mathcal{D} 
	(\mathbb{R}^n,U)$ such that $\|\phi_i-\id\|\leq \tau$, $\phi_i(U\cap E_{k_i})$
	converges to a compact set $E$ in $U$ in local Hausdorff distance,
	\begin{equation}\label{eq:hc0}
		\lim_{i\to\infty}\HM^d(\phi_i(E_{k_i})\, \triangle\, E_{k_i})=0,\
		U\cap \spt\mu\subseteq E\text{ and }\HM^d (U\cap E\setminus \spt\mu)=0.
	\end{equation}
\end{lemma}

\begin{proof}
	We assume that $0<\tau<1/10$ and $E_k\cap U\subseteq \cball(0,R)$ for any 
	$i\geq 1$and some $R>0$, put $U=\oball(0,R+1)\setminus \spt \mu$
	and $E=U\cap \spt \mu$. Then $U$ is open. Let
	$\widetilde{\mathscr{F}}$ be the family of cubes
	which raise from a Whitney decomposition of $U$ with
	the following conditions hold:
	\begin{itemize}
		\item $\widetilde{\mathscr{F}}$ consists of interior disjoint dyadic cubes  and 
			$\cup_{Q\in \widetilde{\mathscr{F}}} Q=U$;
		\item $\sqrt{n}\ell(Q)\leq \dist(Q,\mathbb{R}^n\setminus U)\leq 
			4\sqrt{n}\ell(Q)$;
		\item if $Q_1\cap Q_2\neq \emptyset$, then $1/4\leq
			\ell(Q_1)/\ell(Q_2)\leq 4$.
	\end{itemize}
	We construct $\mathscr{F}$ from $\widetilde{\mathscr{F}}$ as follows: for 
	any $C\in \widetilde{\mathscr{F}}$, if its sidelength is less than $\tau$, 
	then we put $C\in \mathscr{F}$; otherwise, we decompose it into diaydic
	cubes of sidelength $2^{[\ln \tau /\ln 2]}$, then put each smaller cubes
	into $\mathscr{F}$. 
	We put $F_k=E+\oball(0,4^{-k+1})$ and let $\mathcal{Q}^{k}$ be the collection of
	cubes $Q$ in $\mathscr{F}$ whose sidelength is no less than $4^{-k}$.  
	By putting $G_k=\bigcup\{Q:Q\in \mathcal{Q}^k\}$, we have that $G_k
	\subseteq \intr(G_{k+1})$ and $\mathbb{R}^n\setminus\intr(G_{k})\subseteq F_k$.

	Since $\mu(U)=0$, there exists a increasing sequence $\{k_i\}$ of positive
	integers such that 
	\[
		\HM^d(E_m\cap G_i)<c_0^{-1} 4^{-id},\ \forall m\geq k_i .
	\]
	By Theorem \ref{thm:ffp}, there exist Lipschitz mappings
	$\phi_i:\mathbb{R}^n\to \mathbb{R}^n$ such that $\phi_i(Q)\subseteq Q$ for
	$Q\in \mathcal{Q}^i$, $\phi_i\vert_{\mathbb{R}^n\setminus G_i}=
	\id_{\mathbb{R}^n\setminus G_i}$ and $\intr(G_i)\cap \phi_i(E_{k_i})\subseteq 
	\cup \mathcal{Q}_{d-1}^i$. Since $\cup G_i= U$, $\mathbb{R}^n\setminus
	\intr(G_i)\subseteq E+\oball(0,4^{-i+1})$ and $ \phi_i(E_{k_i})\cap 
	\intr(G_i)$ is contained in the union of $(d-1)$-faces of cubes in
	$\mathscr{F}$, we get that $\phi_i(U\cap E_{k_i})$ converges to $E$ in $U$ in 
	local Hausdorff distance, and \eqref{eq:hc0} hold.
\end{proof}
\begin{theorem}
	Suppose $1\leq d\leq m\leq n$.
	Let $\Omega\subseteq \mathbb{R}^n$ be an $m$-dimensional closed submanifold 
	of class $C^2$. Let $U\subseteq \mathbb{R}^n$ be an open set with $U\cap 
	\Omega\neq \emptyset$ and $U\cap \partial \Omega=\emptyset$. Let
	$\mathcal{G}(\Omega,U)$ be a collection of compact subsets in $\Omega$ such that
	\begin{itemize}
		\item[(a)] for any $E\in \mathcal{G}(\Omega,U)$ and $\varphi\in
			\mathcal{D}(\Omega,U)$ we have that $\varphi(E)\in
			\mathcal{G}(\Omega,U)$;
		\item[(b)] there is a sequence $\{E_i\}\subseteq \mathcal{G}(\Omega,U)$ 
			such that $\{E_i\cap U\}$ is uniformly bounded and 
			\[
				\HM^d(E_i\cap U)\to
				\inf\{\HM^d(S\cap U):S\in \mathcal{G}(\Omega,U)\}.
			\]
	\end{itemize}
	Suppose that one of the following conditions hold:
	\begin{itemize}
		\item[(c)] For any sequence $\{E_i\}\subseteq \mathcal{G}(\Omega,U)$, if
			$\HM^d \mr (E_i\cap U)\wc \mu$, then 
			$\spt \mu \in \mathcal{G}(\Omega,U)$.
		\item[(d)] For any sequence $\{E_i\}\subseteq \mathcal{G}(\Omega,U)$, if 
			$E_i\cap U$ converges to $E$ in $U$ in local Hausdorff distance, then 
			$E\in \mathcal{G}(\Omega,U)$.
	\end{itemize}
	Then there exists $E\in \mathcal{G}(\Omega,U)$ such that 
	\[
		\HM^d(E\cap U)=\inf\{\HM^d(S\cap U):S\in \mathcal{G}(\Omega,U)\}.
	\]
\end{theorem}
\begin{proof}
	We assume that $\inf\{\HM^d(S\cap U):S\in \mathcal{G}(\Omega,U)\}<\infty$,
	otherwise it holds trivially. Let $\{E_i\}\subseteq\mathcal{G}(\Omega,U)$ be 
	a uniformly bounded sequence such that 
	\[
		\HM^d(E_i\cap U)\to
		\inf\{\HM^d(S\cap U):S\in \mathcal{G}(\Omega,U)\}.
	\]
	We assume that $E_i\cap U\subseteq \cball(0,R)$ for all $i\geq 1$ and some $R>0$.

	If $(c)$ holds, we take $E=\spt\mu$ and put $\zeta_k=
	\inf\big\{\HM^d(E_k)-\HM^d(\varphi(E_k)):
	\varphi\in \mathcal{D}(\Omega,U))\big\}$, $M_k\equiv 1$,
	$\varepsilon_k\equiv \zeta_k$, $M(r)=\limsup_{k\to \infty}M_k(r)$ and
	$\varepsilon(r)=\limsup_{k\to \infty}\varepsilon_k(r)$, then we have that
	$E\in \mathcal{G}(\Omega,U)$, $M\equiv 1$, $\varepsilon\equiv 0$ and 
	\[
		E_k\in \QM_d(\Omega,U,M_k,\varepsilon_k).
	\]
	By Theorem \ref{thm:vcp}, we get that $\Theta^d(\mu,x)=1$ for $\HM^d$-a.e.
	$x\in E\cap U$, thus 
	\[
		\HM^d(E\cap U)\leq \liminf_{k\to \infty}\HM^d(E_k\cap U) =
		\inf\{\HM^d(S\cap U):S\in \mathcal{G}(\Omega,U)\}.
	\]
	Hence 
	\[
		\HM^d(E\cap U) = \inf\{\HM^d(S\cap U):S\in \mathcal{G}(\Omega,U)\},
	\]
	and $E$ is our desired minimizer.

	If $(d)$ holds, then we assume, up to a subsequence, that $\HM^d\mr(E_k\cap
	U)\wc \mu$. By Lemma \ref{le:hc}, for any $\tau>0$, we can find a 
	subsequence $\{E_{k_i}\}$ of $ \{E_k\}$ and a sequence of Lipschitz mappings
	$\{\phi_i\}\subseteq \mathcal{D}(\mathbb{R}^n, U)$ such that 
	$\|\phi_i-\id\|\leq \tau$, $\phi_i(U\cap E_{k_i})$ converges to a set $E$ in
	$U$ in local Hausdorff distance. We will show that $E$ is our desired
	minimizer. Indeed, we see that $E\in \mathcal{G}(\Omega,U)$. Since
	$\HM^d(U\cap E\setminus \spt\mu)=0$, we get that 
	\[
		\HM^d(E\cap U)\leq \HM^d(U\cap \spt\mu).
	\]
	Similar to the above case, by Theorem \ref{thm:vcp}, we have that
	\[
		\HM^d(E\cap U)\leq \HM^d(U\cap \spt\mu)\leq \inf\{\HM^d(S\cap U):S\in \mathcal{G}(\Omega,U)\},
	\]
	and finally we get that $E$ is a minimizer.
\end{proof}
\begin{remark}
	Let $\Omega\subseteq \mathbb{R}^n$ be an $m$-dimensional closed submanifold
	of class $C^2$, $B_0$ be a compact subset, $\check{C}(\Omega, B_0,G,L)$ be the collection of subsets in $\Omega$ which
	are spanning $L$. We take $U=\mathbb{R}^n\setminus B_0$ and $\mathcal{G}
	(\Omega,U)=\check{C}(\Omega, B_0,G,L)$, then $\mathcal{G} (\Omega,U)$
	satisfies both conditions $(c)$ and $(d)$, as well as condition $(a)$, in above theorem. 
\end{remark}
\section{Qusaiminimality of limit sets}
\begin{lemma}\label{le:ld}
	Let $E\subseteq \mathbb{R}^n$ be a $d$-rectifiable Borel set with
	$0<\HM^d(E)<\infty$, $\varphi:\mathbb{R}^n\to \mathbb{R}^n$ be a Lipschitz
	mapping. Then for $\HM^d$-a.e. $x\in E$, the approximate tangent plane
	$T_x=\Tan^dE,x)$ exists, and   
	\begin{equation}\label{eq:ld}
		\lim_{r\to 0}\frac{\HM^d(\varphi(E\cap
		\cball(x,r)))}{\omega_d r^d}=\lim_{r\to 0}\frac{\HM^d(\varphi(T_x\cap
		\cball(x,r)))}{\omega_d r^d}=\ap J_d(\varphi\vert_E)(x).
	\end{equation}
\end{lemma}
\begin{proof}
	If $\varphi\vert_{E}$ is injective, by Theorem 3.2.22 in
	\cite{Federer:1969}, we have that
	\[
		\HM^d(\varphi(E\cap \cball(x,r)))=\int_{E\cap \cball(x,r)}\ap
		J_d(\varphi\vert_{E})(y)\ud \HM^d(y)=\int_{\cball(x,r)}\ap
		J_d(\varphi\vert_{E})\ud\HM^d\mr E.
	\]
	Since $E$ is $d$-rectifiable, we see that for $\HM^d$-a.e. $x\in E$
	\[
		\lim_{r\to 0}\frac{\HM^d(E\cap \cball(x,r))}{\omega_d r^d}=1.
	\]
	By Lebesgue density theorem, we get that for $\HM^d$-a.e. $x\in E$,
	\[
		\lim_{r\to 0}\frac{\HM^d(\varphi(E\cap
		\cball(x,r)))}{\omega_d r^d}=\lim_{r\to 0}
		\frac{\int_{\cball(x,r)}\ap J_d(\varphi\vert_{E})\ud\HM^d\mr E}
		{\HM^d\mr E (\cball(x,r))}=\ap J_d(\varphi\vert_{E})(x).
	\]
	Since $E$ is $d$-rectifiable and $\varphi$ is Lipschitz, we see that 
	$\HM^d$-a.e. $x\in E$, $T_x=\Tan^dE,x)$ and $\ap D(\varphi\vert_E)(x)$ exist,
	then for such $x$, $D(\varphi\vert_{T_x})(x)$ exist and
	$D(\varphi\vert_{T_x})(x)=\ap D(\varphi\vert_E)(x)$, thus 
	\[
		\lim_{r\to 0}\frac{\HM^d(\varphi(T_x\cap \cball(x,r)))}{\omega_d r^d}
		=\|\wedge_d D(\varphi\vert_{T_x})(x)\|=\ap J_d(\varphi\vert_{E})(x).
	\]
	We get that \eqref{eq:ld} hold when $\varphi$ is injective.

	In general case, we let $E'\subseteq E$ be a Borel set such that 
	$\HM^d(E\setminus E')=0$, and  for any $x\in E'$, $\ap D(\varphi\vert_E)(x)$
	exists. By Corollary 3.2.4 in \cite{Federer:1969}, we have the decomposition 
	\[
		E'=E_0\cup \bigcup_{i\geq 1} E_i,
	\]
	where $E_i$, $i\geq 0$, are mutual disjoint Borel sets, 
	$J_d (\varphi\vert_E)(x)=0$ for $x\in E_0$, $\varphi\vert_{E_i}$ is injective
	for $i\geq 1$. Put $\widehat{E}_i=E'\setminus E_i$ for $i\geq 1$. We see
	that $\HM^d(\varphi(E_0))=0$, for $\HM^d$-a.e. $x\in E_i$,
	$\Theta^d(E_i,x)=1$, $\Theta^d(\widehat{E}_i,x)=0$,
	$\Tan^dE_i,x)=\Tan^dE,x)$ exists, and $\ap J_d(\varphi\vert_{E_i})(x)=\ap
	J_d(\varphi\vert_E)(x)$. Since $\varphi\vert_{E_i}$ is injective and
	$\varphi$ is Lipschitz, we get that for $\HM^d$-a.e. $x\in E_i$,
	\[
		\lim_{r\to 0}\frac{\HM^d(\varphi(E\cap
		\cball(x,r)))}{\omega_d r^d}=\lim_{r\to 0}\frac{\HM^d(\varphi(T_x\cap
		\cball(x,r)))}{\omega_d r^d}=\ap J_d(\varphi\vert_{E_i})(x)=\ap J_d
		(\varphi\vert_E)(x).
	\]

\end{proof}

\begin{proof}[Proof of Theorem \ref{thm:QM}]
	Let $\xi=\xi_{\Omega}$ be the metric projection from $\Unp(\Omega)$ onto
	$\Omega$. Let $\varphi\in \mathcal{D}(\Omega,U)$ be any deformation. We take 
	$0<\tau<1/2$, and put $W=W_{\varphi}=\{x\in U:\varphi(x)\neq x\}$. 
	For any $\delta>0$, we put $W^{\delta}=\{x\in
	W:\dist(x,\mathbb{R}^n\setminus W)>\delta\}$. Since $\overline{W}\subseteq U$
	is compact and $\reach(\Omega,\cdot)$ is continuous, we have that
	$r_0=\inf\{\reach(\Omega,x):x\in W\}>0$. For any $0<\delta<r_0/2$, we put 
	$O_{\delta}=W^{3 \delta}+\cball(0,\delta)$, by \eqref{eq:mplip}, then we 
	have that $\Lip(\xi\vert_{O_{\delta}})\leq 2$. Indeed, if $x\in W$ and
	$0<r<r_0/2$, then we can get that 
	\[
		\Lip(\xi\vert_{\cball(x,r)})\leq r_0/(r_0-r)\leq 1+ 2r/r_0.
	\]
	We put $\widetilde{\varphi}=\varphi\circ \xi:\Unp(\Omega)\to \Omega$. Then
	$\widetilde{\varphi}\vert_{\Omega}=\varphi_{\Omega}$ and
	$\widetilde{\varphi}$ is Lipschitz in a neighborhood of $\overline{W}$.

	Since $E$ is $d$-rectifiable, we see that $\Theta^d(E,x)=1$ and 
	$T_x=\Tan^d(E,x)$ exists for $\HM^d$-a.e. $x\in E$. By Lemma \ref{le:ld}, we
	get that for $\HM^d$-a.e. $x\in E$, \eqref{eq:ld} holds. So there is a 
	$r_{\tau}(x)>0$ such that for any $0<r\leq r_{\tau}(x)$,
	\[
		\begin{gathered}
			\left| \frac{\HM^d(E\cap \cball(x,r))}{\omega_d r^d} -1\right|\leq \tau,\
			\frac{\HM^d\left(E\cap \cball(x,r)\setminus
			\mathcal{C}(T_x,x,\tau)\right)}{\omega_d r^d}\leq \tau,\\
			\left|\frac{\HM^d(\widetilde{\varphi}(T_x\cap \cball(x,r)))}{\omega_d r^d}-
			\ap J_d(\widetilde{\varphi}\vert_E)(x)\right|\leq  \tau
		\end{gathered}
	\]
	and
	\[
		\left|\frac{\HM^d(\widetilde{\varphi}(E\cap \cball(x,r)))}{\omega_d r^d}-
		\ap J_d(\widetilde{\varphi}\vert_E)(x)\right|\leq  \tau.
	\]
	Since $\widetilde{\varphi}\vert_{\Omega}=\varphi_{\Omega}$, we get that
	\[
		\HM^d(\widetilde{\varphi}(T_x\cap \cball(x,r)))\leq
		\HM^d(\varphi(E\cap \cball(x,r)))+2 \tau \omega_d r^d.
	\]
	We denote by $E'$ the points $x\in E$ such that $\Theta^d(E,x)=1$,
	$T_x=\Tan^dE,x)$ exists, and \eqref{eq:ld} holds. Then $\HM^d(E\setminus
	E')=0$. We take $\delta=\delta(\tau)>0$ such
	that $\HM^d(E\cap W\setminus W^{3 \delta})< \tau$. Then 
	\[
		\mathscr{C}=\left\{ \cball(x,r):x\in E'\cap W^{3\delta}, 0<r< \min\{r_{\tau}(x),
		\delta,r_0/2\}, \HM^d(E\cap \partial \cball(x,r))=0\right\}
	\]
	is a Vitali covering of $E'\cap W^{3\delta}$. By Vitali covering theorem, we
	can find countable many balls $\{\cball(x_i,r_i)\}_{i\in I}\subseteq
	\mathscr{C}$ such that 
	\[
		\HM^d\left(E'\cap W^{3\delta}\setminus \bigcup_{i\in
		I}\cball(x_i,r_i)\right)=0.
	\]
	So there is a positive integer $N>0$ such that
	\[
		\HM^d\left(E'\cap W^{3\delta}\setminus
		\bigcup_{i=1}^N\cball(x_i,r_i)\right)< \tau.
	\]

	We put $\cball_i=\cball(x_i,r_i)$, $\oball_i=\oball(x_i,r_i)$, $T_i=\Tan^dE,x_i)$ and
	$A_i=(T_i+\cball(0,\tau r_i) )\cap \cball(x_i,(1-\tau)r_i)$.
	By Lemma \ref{le:mp}, there exit Lipschitz mappings $g_i:\mathbb{R}^n\to
	\mathbb{R}^n$ such that 
	\[
		g_i\vert_{\mathbb{R}^n\setminus \oball_i}=\id_{\mathbb{R}^n\setminus U_i},
		\ g_i\vert_{A_i}=(T_i)_{\natural}\vert_{A_i},\ g_i(\oball_i)\subseteq
		\oball_i \text{ and } \Lip(g_i)\leq 4.
	\]
	Since $\Lip(\xi\vert_{\cball_i})\leq 1+2r/r_0\leq 2$, we get that $\xi\circ
	g_i(\oball_i)\subseteq \Omega\cap 2 \oball_i\subseteq W^{\delta}$.
	We put $g=g_N\circ\cdots\circ g_1$ and $U'=\cup_{1\leq i\leq N}\oball_i$. 
	Since $\cball_i$, $1\leq i\leq N$, are mutual disjoint, we get that 
	\[
		g\vert_{\mathbb{R}^n\setminus U'}=\id_{\mathbb{R}^n\setminus
		U'},\ g(U')\subseteq U' \text{ and } \Lip(g)\leq 4.
	\]
	It is easy to see that $\widetilde{\varphi}\circ g=\varphi\circ\xi\circ
	g\in \mathcal{D}(\mathbb{R}^n,U)$ and $W_{\widetilde{\varphi}\circ g}
	\subseteq W$. And
	\[
		\begin{aligned}
			\HM^d(E\cap \cball_i\setminus A_i)&\leq \HM^d(E\cap \cball_i\setminus
			\cball(x_i,(1- \tau)r_i))+\HM^d(E\cap \cball_i\setminus
			\mathcal{C}(T_i,x_i,\tau))\\
			&\leq (1+\tau)\omega_d r_i^d-(1-\tau)\omega_d (1-
			\tau)^dr_i^d + \tau \omega_d r_i^d\\
			&\leq (d+2)\omega_d \tau r_i^d.
		\end{aligned}
	\]
	Put $r=\diam(W\cap \varphi(W))$. By Corollary \ref{co:lusc}, we get that 
	\[
		\begin{aligned}
			\HM^d(E\cap W)&\leq \liminf_{m\to \infty}\HM^d(E_m\cap W)\leq
			\liminf_{m\to \infty}(M_m(r)\HM^d(\widetilde{\varphi}\circ g(E_m\cap
			W))+\varepsilon_m(r))\\
			&\leq M(r)\liminf_{m\to \infty}\HM^d(\widetilde{\varphi}\circ g(E_m\cap
			W))+\varepsilon(r)\\
			&\leq M(r)\liminf_{m\to \infty}\left(\sum_{i=1}^{N}\HM^d(\widetilde{\varphi}
			\circ g(E_m\cap
			\cball_i))+\HM^d\Big(\widetilde{\varphi}\circ g\big(E_m\cap W\setminus
			\cup_{i=1}^{N}\cball_i\big)\Big)\right)+\varepsilon(r)\\
			&\leq M(r)\liminf_{m\to \infty}\sum_{i=1}^{N}\HM^d(\widetilde{\varphi}\circ 
			g(E_m\cap A_i))+\varepsilon(r)\\
			&\quad + M(r)(8\Lip(\varphi))^d\limsup_{m\to \infty}\left(\sum_{i=1}^{N}
			\HM^d(E_m\cap \cball_i \setminus A_i)+\HM^d\Big(E_m\cap W\setminus 
			\cup_{i=1}^{N}\cball_i\Big)\right)\\
			&\leq M(r)\sum_{i=1}^{N}\HM^d(\widetilde{\varphi}(T_i\cap
			\cball_i))+M(r)(8\Lip(\varphi))^d\left(2 \tau+\sum_{i=1}^N (d+2)
			\omega_d \tau r_i^d\right)+\varepsilon(r)\\
			&\leq M(r)\sum_{i=1}^{N}\HM^d(\varphi(E\cap
			\cball_i))+M(r)(8\Lip(\varphi))^d\left(2 \tau+\sum_{i=1}^N (d+4)
			\omega_d \tau r_i^d\right)+\varepsilon(r)\\
			&\leq M(r)\HM^d(\varphi(E\cap W))+M(r)(8\Lip(\varphi))^d\left(2 
			\tau+\sum_{i=1}^N\frac{ (d+4)
			\tau}{1-\tau}\HM^d(E\cap \cball_i) \right)+\varepsilon(r)\\
			&\leq M(r)\HM^d(\varphi(E\cap
			W))+M(r)(8\Lip(\varphi))^d(2+(2d+8)\HM^d(E\cap W))\tau+\varepsilon(r).
		\end{aligned}
	\]
	Let $\tau\to 0$, we get that
	\[
		\HM^d(E\cap W)\leq M(r)\HM^d(\varphi(E\cap W))+\varepsilon(r),
	\]
	thus $E\in \QM_d(\Omega,U,M,\varepsilon)$.

	By Proposition \ref{prop:limitradon} and Theorem \ref{thm:vcp}, we get that
	$\mu\mr U$ is $d$-rectifiable and $1\leq \Theta^d(\mu,x)\leq M(0+)$ for
	$\HM^d$-a.e. $x\in E$. Thus for any open set $O\subseteq U$ and compact set
	$H\subseteq U$, we have that 
	\[
		\HM^d(E\cap O)\leq \mu(O)\leq \liminf_{k\to \infty}(\HM^d\mr E_k) (O) =
		\liminf_{k\to \infty}\HM^d(E_k\cap O)
	\]
	and 
	\[
		M(0+)\HM^d(E\cap H)\geq \mu(H)\geq \limsup_{k\to \infty}(\HM^d\mr E_k) (H)
		=\limsup_{k\to \infty}\HM^d( E_k\cap H).
	\]
\end{proof}

\begin{proof}[Proof of Theorem \ref{thm:qmv}]
	Since $\mu\mr U$ is $d$-rectifiable and $\Theta^d(\mu,x)=1$ for $\HM^d$-a.e.
	$x\in E$, we get from Theorem \ref{thm:vcp} that $V\mr U\times \grass{n}{d}
	=\var(E)$.
\end{proof}
\begin{proof}[Proof of Corollary \ref{co:convvarpre}] 
	Without loss of generality, we assume that $\HM^d(E_k)<\infty$. Put 
	\[
		\zeta_k=\sup\big\{\HM^d(E_k)-\HM^d(\varphi(E_k)):\varphi\in
		\mathcal{D}(\Omega,U)\big\}.
	\]
	Then we get that $\zeta_k=\mathscr{E}(E_k)\geq 0$ and $\zeta_k\to 0$ as
	$k\to \infty$. Put  $M_k\equiv 1$ and $\varepsilon_k\equiv \zeta_k$,
	$M(r)=\limsup_{k\to \infty}M_k(r)$ and $\varepsilon(r)=\limsup_{k\to
	\infty}\varepsilon_k(r)$. Then we see that $E_k\in
	\QM_d(\Omega,U,M_k,\varepsilon_k)$, $M\equiv 1$ and $\varepsilon
	\equiv 0$. By Theorem \ref{thm:vcp}, setting $E=U\cap \spt\|V\|$, we have
	that $V\mr U\times \grass{n}{d}=\var(E)$, and by Theorem \ref{thm:QM}, we
	get that $E$ is minimal in $U\cap \Omega$.
\end{proof}

\section{Convergence of almost minimal sets}
\begin{lemma}
	Suppose that $E_k\in \QM_d(\mathbb{R}^n,U,M_k,\varepsilon_k)$, $M(r)
	=\limsup_{k}M_k(r)$ and $\varepsilon(r)=\limsup_k \varepsilon_k(r)$, 
	$\HM^d\mr (E_k\cap U) \wc \mu$. If $M(0+)<\infty$ and there exists $r_0>0$ 
	such that 
	\[
		\int_{0}^{r_0}r^{-d-1}\varepsilon(r)\ud r <\infty,
	\]
	then, by setting $\varepsilon_1(r)=\int_{0}^{r}t^{-d-1}\varepsilon(t)\ud t$, 
	we have that for $\HM^d$-a.e. $x\in  U\cap \spt \mu$
	and any $0<r<\min\{r_0, \dist (x,\mathbb{R}^n\setminus U)\}$,
	\[
		\mu(\cball(x,r))\geq \left(c_0^{-d-1}(3
		\sqrt{n})^{d-n}d^{-d}M(r)^{-d}-\frac{c_0M(r)d}{c_0M(r)-1}\Big(\varepsilon(r)+\varepsilon_1(r)\Big)\right)r^d.
	\]
\end{lemma}
\begin{proof}
	Since $\int_{0}^{r_0}r^{-d-1}\varepsilon(r)\ud r <\infty$, we get that 
	$\limsup_{r\to 0+}r^{-d}\varepsilon(r)=0$. By 
	Lemma \ref{le:lden}, we see that for $\mu$-a.e. $x\in U\cap \spt\mu$,
	$\Theta_{\ast}^d(\mu,x)\geq (2 \omega_d c_0 M(0+))^{-1}$. We take a such 
	point $x$ and $0<r<\min\{r_0, \dist (x,\mathbb{R}^n\setminus U)$.
	For any $0<t\leq r$, we put $v_k(t)=\HM^d(E_k\cap \cball(x,t))$,
	$v(t)=\mu(\cball(x,t))$, and 
	\[
		\alpha=\left(\frac{c_0 M(r)-1}{c_0M(r)}\right)^{1/d}.
	\]
	Then $1/4<\alpha\leq 1-1/(dc_0M(r))$. If 
	$ v(r)<c_0^{-d-1}(3\sqrt{n})^{d-n}d^{-d}M(r)^{-d}r^d$,
	then
	\[
		\limsup_{k\to \infty}\HM^d(E_k\cap \cball(x,r))\leq v(r)< c_0^{-1}
		(3\sqrt{n})^{d-n}(r-\alpha r)^d.
	\]
	We take positive integer $m$ such that 
	\[
		\limsup_{k\to \infty}\HM^d(E_k\cap \cball(x,\alpha^k r))< c_0^{-1}
		(3\sqrt{n})^{d-n}(\alpha^{\ell} r-\alpha^{\ell+1} r)^d, \text{ for }
		0\leq \ell\leq m-1,
	\]
	and 
	\[
		\limsup_{k\to \infty}\HM^d(E_k\cap \cball(x,\alpha^m r))\geq c_0^{-1}
		(3\sqrt{n})^{d-n}(\alpha^m r-\alpha^{m+1} r)^d.
	\]

	For $k$ large enough, $\HM^d(E_k\cap \cball(x,r))< c_0^{-1}
	(3\sqrt{n})^{d-n}(r-\alpha r)^d$.
	By Lemma \ref{le:ffpa}, there is a Lipschitz mapping $\phi_k:\mathbb{R}^n\to
	\mathbb{R}^n$ such that $\phi_k(\oball(x,r))\subseteq \oball(x,r)$, 
	$ \phi_k\vert_{\mathbb{R}^n\setminus
	\oball(x,r)}=\id_{\mathbb{R}^n\setminus \oball(x,r)}$ and
	\[
		\HM^d(\phi_k(E_k\cap \oball(x,r)))\leq c_0\HM^d(E_k\cap \ann(x,\alpha r,r)).
	\]
	Since $E_k\in \QM_d(\mathbb{R}^n,U,M_k,\varepsilon_k)$, we get that 
	\[
		\HM^d(E_k\cap \oball(x,r))\leq M_k(r) \HM^d(\phi(E_k\cap \oball(x,r))) +
		\varepsilon_k(r),
	\]
	thus
	\[
		v_k(r)\leq c_0M_k(r)(v_k(r)-v_k(\alpha r))+ \varepsilon_k(r).
	\]
	Similarly, we can get that for $1\leq \ell \leq m-1$,
	\[
		\begin{aligned}
			v_k(\alpha^{\ell} r)&\leq c_0M_k(\alpha^{\ell} r)(v_k(\alpha^{\ell}
			r)-v_k(\alpha^{\ell+1} r))+ \varepsilon_k(\alpha^{\ell} r)\\
			&\leq c_0M_k(r)(v_k(\alpha^{\ell} r)-v_k(\alpha^{\ell+1} r))+
			\varepsilon_k(\alpha^{\ell} r).
		\end{aligned}
	\]
	Hence 
	\[
		v_k(\alpha^{\ell+1} r)\leq \frac{c_0 M_k(r)-1}{c_0
		M_k(r)}v_k(\alpha^{\ell} r) +\frac{1}{c_0
		M_k(r)}\varepsilon_k(\alpha^{\ell} r),
	\]
	and 
	\[
		v_k(\alpha^m r)\leq  \left( \frac{c_0M_k(r)-1}{c_0M_k(r)}\right)^m
		v_k(r)+\frac{1}{c_0M_k(r)}\sum_{\ell=0}^{m-1}\left(\frac{c_0M_k(r)-1}
		{c_0M_k(r)}\right)^{\ell} \varepsilon_k(\alpha^{m-1-\ell}r).
	\]
	Thus
	\[
		v_k(r)\geq \left( \frac{c_0M_k(r)}{c_0M_k(r)-1}\right)^m v_k(\alpha^m r)-
		\frac{1}{c_0M_k(r)-1}\sum_{\ell=0}^{m-1}\left(\frac{c_0M_k(r)-1}{c_0M_k(r)}
		\right)^{\ell} \varepsilon_k(\alpha^{\ell}r)
	\]
	and 
	\[
		\begin{aligned}
			v(r)&\geq \limsup_{k\to \infty}v_k(r)\\
			&\geq \left(
			\frac{c_0M(r)}{c_0M(r)-1}\right)^m \limsup_{k\to \infty}v_k(\alpha^m r)-
			\frac{1}{c_0M(r)-1}\sum_{\ell=0}^{m-1}\left(\frac{c_0M(r)-1}{c_0M(r)}
			\right)^{\ell} \varepsilon(\alpha^{\ell}r) \\
			&= \alpha^{-md}c_0^{-1} (3\sqrt{n})^{d-n}(\alpha^m r-\alpha^{m+1} r)^d-
			\frac{1}{c_0M(r)-1}\sum_{\ell=0}^{m-1} \alpha^{-\ell d}
			\varepsilon(\alpha^{\ell} r).\\
		\end{aligned}
	\]
	Since 
	\[
		\sum_{\ell=1}^{m-1}\alpha^{\ell d}\varepsilon(\alpha^{\ell
		d}r)\leq\sum_{\ell=1}^{m-1}\frac{r^d}{\ln 1/\alpha} \int_{\alpha^{\ell}
		r}^{\alpha^{\ell-1}r}t^{-d-1}\varepsilon(t) \ud t \leq \frac{r^d}{\ln
		1/\alpha} \varepsilon_1(r)
	\]
	and 
	\[
		\ln 1/\alpha=-\frac{1}{d}\ln \left(1-\frac{1}{c_0M(r)}\right)\geq
		\frac{1}{dc_0M(r)},
	\]
	we get that
	\[
		\begin{aligned}
			v(r)&\geq c_0^{-1}(3 \sqrt{n})^{d-n}(1-\alpha)^d r^d
			-\frac{dc_0M(r)}{c_0M(r)-1}\left(\varepsilon(r)+\varepsilon_1(r)\right)r^d\\
			&\geq \left(c_0^{-d-1}(3 \sqrt{n})^{d-n}d^{-d}M(r)^{-d}-
			\frac{dc_0M(r)}{c_0M(r)-1}\Big(\varepsilon(r)+\varepsilon_1(r)\Big)\right)r^d.
		\end{aligned}
	\]
\end{proof}
\begin{corollary}\label{co:AR}
	Suppose that $E\in \QM_d(\mathbb{R}^n,U,M,\varepsilon)$.
	If $ \int_{0}^{r_0}r^{-d-1}\varepsilon(r)\ud r <\infty$ for some $r_0>0$,
	then there is a constant $C=C(n,d,M(0+))>0$ and $0<r_1\leq r_0$ such that 
	for any $x\in E\cap U$ and $0<r\leq
	\min\{r_1,\dist(x,\mathbb{R}^n\setminus U)\}$,
	\begin{equation}\label{eq:AR}
		C^{-1}r^{d}\leq \HM^d(E\cap \cball(x,r))\leq C r^d.
	\end{equation}
\end{corollary}
\begin{proof}
	By Lemma \ref{le:uAR} and above lemma, we get that \eqref{eq:AR} holds for
	$\HM^d$-a.e. $x\in E$. By the definition of quasiminimal sets we see that
	$\HM^d(E\cap \oball(x,r))>0$ for any $x\in E\cap U$ and $r>0$, thus
	\eqref{eq:AR} holds for every $x\in E$.
\end{proof}
\begin{lemma}
	\label{le:HDV}
	Suppose that $\{E_k\}\subseteq \QM_d(U,U,M,\varepsilon)$ and $
	\int_{0}^{r_0}r^{-d-1}\varepsilon(r)\ud r <\infty$ for some $r_0>0$. If
	$\HM^d\mr E_k$ converges in weak topology to a Radon measure $\mu$  
	with $E= U\cap \spt \mu$, then $E_k$ converges to $E$ in $U$ in local 
	Hausdorff distance; conversely, if $E_k$ converges to $E$ in $U$ in local 
	Hausdorff distance and $\HM^d\mr E_k\wc \mu$, then $U\cap \spt\mu = E$.
\end{lemma}
\begin{proof}
	If $\HM^d\mr E_k\wc \mu$, $E=U\cap \spt\mu$, then by Theorem \ref{thm:QM}, we get
	that $E\in \QM_d(U,U,M,\varepsilon)$. Let $\{E_{k_i}\}_{i=1}^{\infty}$ be a
	subsequence which converges to a set $F$ in $U$ in local Hausdorff distance.
	Then for any $x\in E$ and $r>0$ with $\oball(x,r)\subseteq U$, we have 
	\[
		0<\mu(\oball(x,r))\leq \liminf_{i\to \infty} \HM^d(E_{k_i}\cap \oball(x,
		r)),
	\]
	thus $E_{k_i}\cap \oball(x,r)\neq \emptyset$ for $i$ large enough, hence
	\[
		\dist(x,F)\leq \limsup_{i\to \infty}\dist(x,E_{k_i})\leq r.
	\]
	By the arbitrariness of $r>0$, we get that $E\subseteq F$. Since $F$ is the
	local Hausdorff distance limit of $\{E_{k_i}\}$, we get that for any $y\in F$,
	$ \dist(y,E_{k_i})\to 0$. So there exist $z_i\in E_{k_i}$ such that $|y-z_i|\to
	0$. By the corollary above, there exist $C>0$ and $r_1>0$ such that
	for any $0<r<r_1$, 
	\[
		C^{-1}r^{d}\leq \HM^d(E_{k_i}\cap \cball(x,r))\leq Cr^d,
	\]
	when $\oball(y, 2r)\subseteq U$ and $i$ is large so that $|y-z_i|<r$. Thus 
	\[
		\mu(\cball(y,r))\geq \limsup_{i\to \infty}\HM^d(E_{k_i}\cap \cball(y,r))
		\geq C^{-1} r^{d}.
	\]
	Hence $y\in U\cap\spt\mu = E$. So $E = F$. We get that $\{E_k\}$ converges to
	$E$ in $U$ in local Hausdorff distance.

	If $E_k$ converges to $E$ in $U$ in local Hausdorff distance and $\HM^d\mr 
	E_{k}$ converges to a Radon measure $\mu$. Then for the same reason we will 
	get both $U\cap \spt\mu \subseteq E$ and $E\subseteq U\cap \spt \mu$.
\end{proof}

\begin{corollary}
	Suppose $1\leq d<m\leq n$. Let $\Omega\subseteq \mathbb{R}^n$ be a closed
	$m$-dimensional submanifold  of class $C^2$. Let $U\subseteq \mathbb{R}^n$
	be an open set such that $U\cap \Omega\neq \emptyset$ and $U\cap \partial
	\Omega=\emptyset$. Let $\{E_k\}\subseteq \QM_d(\Omega\cap
	U,U,M,\varepsilon)$ be a sequence of quasiminimal sets. Suppose that $E_k$
	converges to $E$ in $U$ in local Hausdorff distance. If $
	\int_{0}^{r_0}r^{-d-1}\varepsilon(r)\ud r
	<\infty$ for some $r_0>0$, then $E\in \QM_d(\Omega\cap U,U,M,\varepsilon)$,
	and if in addition $\HM^d(E)<\infty$ and $\HM^d(E_k)\to \HM^d(E)$, we will
	get that  $ \var(E_k) \wc \var(E) $.
\end{corollary}
\begin{proof}
	Let $\{E_{k_i}\}$ be a subsequence such that $\HM^d\mr E_{k_i}$ converges to
	a Radon $\mu$. Then by the lemma above, we get that $E=U\cap \spt\mu$. By
	Theorem \ref{thm:QM}, we get that $E\in \QM_d(\Omega\cap U,U,M,\varepsilon)$.
	For any $x\in E$, we put $r_x=\dist(x,\mathbb{R}^n\setminus U)$. Since
	$\HM^d(E)<\infty$, we get that for $H^1$-a.e. $r\in (0,r_x)$, $\HM^d(E\cap
	\partial \cball(x,r)) =0$. Then for such $r$, 
	\[
		\HM^d(E\cap \oball(x,r))\leq \liminf_{k\to \infty}\HM^d(E_k\cap
	\oball(x,r)) \]
	and 
	\[
		\HM^d(E\cap (U\setminus \cball(x,r)))\leq \liminf_{k\to \infty}\HM^d(E_k
		\cap (U\setminus \cball(x,r))).
	\]
	Thus 
	\[
		\begin{aligned}
			\HM^d(E\cap \cball(x,r))& = \HM^d(E)-\HM^d(E\cap (U\setminus \cball(x,
			r)))\\
			&\geq \lim_{k\to\infty}\HM^d(E)- \liminf_{k\to \infty}\HM^d(E_k
			\cap (U\setminus \cball(x,r)))\\
			&\geq \limsup_{k\to \infty}\HM^d(E_k\cap \cball(x,r))\geq \limsup_{k\to
			\infty}\HM^d(E_k\cap \oball(x,r))\\
			&\geq \liminf_{k\to \infty}\HM^d(E_k\cap \oball(x,r))\geq \HM^d(E\cap
			\oball(x,r)).
		\end{aligned}
	\]
	But $\HM^d(E\cap \partial \cball(x,r))=0$, we get that 
	\[
		\lim_{k\to \infty}\HM^d(E_k\cap \oball(x,r))= \HM^d(E\cap \oball(x,r)).
	\]
	By Theorem \ref{thm:vcp}, we get that $\var(E_k)\wc \var(E)$.
\end{proof}

\begin{proof}[Proof of Theorem \ref{thm:amostminimal}]
	Let $\{E_k\}\subseteq\AM_d(\Omega\cap U,M,\varepsilon)$ be a sequence such
	that $E_k$ converges to $E$ in $U$ in local Hausdorff distance. Then by the
	corollary above, we get that $E\in \AM_d(\Omega\cap U,M,\varepsilon)$. Thus
	$\AM_d(\Omega\cap U,M,\varepsilon)$ is compact with respect to the local
	Hausdorff distance. Since $M(0+)=1$, by Theorem \ref{thm:vcp}, we have that 
	$\var(E_k)\wc \var(E)$. And conversely, if $\{E_k\}\subseteq \AM_d(\Omega\cap
	U,M,\varepsilon)$ is a sequence of almost minimal sets such that $\var(E_k)
	\wc V$, then $E=U\cap \spt\|V\|\in \AM_d(\Omega\cap U,M,\varepsilon)$. By Lemma
	\ref{le:HDV}, we get that $E_k$ converges to $E$ in $U$ in local Hausdorff
	distance. Thus the mapping $\var:\AM_d(\Omega\cap U,M,\varepsilon)\to
	\{\var(E): E\in \AM_d(\Omega\cap U,M,\varepsilon)\}$ is a homeomorphism.
\end{proof}

\bibliography{gmt}
\end{document}